\numberwithin{equation}{section}
\title{Derived Categories of Toric Fano 3-Folds via the Frobenius Morphism}
\keywords{Derived Categories, Toric Fano 3-Folds, Frobenius Splitting, Full Strongly Exceptional Sequences,
King's conjecture}
\author{Alessandro Bernardi}
\address{
Dipartimento di Matematica\\
Universit\`a  degli Studi di Firenze\\
\email{bernardi@math.unifi.it}
}
\author{Sofia Tirabassi}
\address{
Dipartimento di Matematica\\
Universit\`a  degli Studi  Roma Tre\\
\email{tirabass@mat.uniroma3.it}
}
\date{2009/12/20}
\newcommand{\occ}{\'{o}\;}
\newcommand{\bbP}{\mathbb{P}}
\newcommand{\R}{\mathbb{R}}
\newcommand{\C}{\mathbb{C}}
\newcommand{\Z}{\mathbb{Z}}
\newcommand{\N}{\mathbb{N}}
\newcommand{\inv}[1]{{#1}^{-1}}
\newcommand{\ra}{\rightarrow}
\newcommand{\dual}{\vee}
\newcommand{\spec}{\mathrm{Spec\:}}
\newcommand{\fas}[1]{\mathcal{#1}}
\newcommand{\F}{\mathscr{F}}
\newcommand{\strutt}[1]{\mathcal{O}_{#1}}
\newcommand{\ox}{\strutt{X}}
\newcommand{\sO}{\mathscr{O}}
\newcommand{\Hom}{\mathrm{Hom}}
\newcommand{\Ext}{\mathrm{Ext}}
\newcommand{\Pic}{\mathrm{Pic}}
\newcommand{\ellesigma}[2]{l_{\sigma_{#1}}(v_{#2})}
\newcommand{\elsig}[1]{l_{\sigma_{#1}}}
\newcommand{\ei}[1]{\widehat{e}_{#1}}
\newcommand{\duno}{\mathcal{D}_1}
\newcommand{\ddue}{\mathcal{D}_2}
\newcommand{\euno}{\mathcal{E}_1}
\newcommand{\edue}{\mathcal{E}_2}
\newcommand{\equat}{\mathcal{E}_4}
\newcommand{\odiv}[1]{\fas{O}(\pm(#1))}
\DeclareMathOperator{\gl}{GL}
\theoremstyle{plain}                    
\newtheorem{conj}[thm]{Conjecture}
\newtheorem*{teo*}{Theorem}
\theoremstyle{definition}               
\newtheorem{defin}{Definition}[section]
\newtheorem{nota}{Notation}[section]
\theoremstyle{remark}                   
\newtheorem{prob}{Problem}
\begin{document}

\maketitle

\begin{abstract}
In \cite[Conjecture 3.6]{frobenius}, Costa and Mir\'{o}-Roig state the following conjecture:\\
Every smooth complete toric Fano variety has a full strongly exceptional collection of line bundles. The goal
of this article is to prove it for toric
Fano 3-folds.

\end{abstract}

\section{Introduction}
Let $X$ be a smooth projective variety defined over an algebraically closed field $K$ of characteristic $0$
and we denote by
$D^b(X)=D^b(\fas{O}_X-mod)$ the derived category of bounded complexes of coherent sheaves of
$\fas{O}_X-$modules. Recall that a coherent sheaf $\F$ on
a smooth projective variety $X$ is \emph{exceptional} if $\mathrm{Hom}_{\fas{O}_X}(\F,\F)=K$ and
$\mathrm{Ext}^i_{\fas{O}_X}(\F,\F)=0$ for $i>0$. An
ordered collection $(\F_0,\F_1,...,\F_n)$ of coherent sheaves on $X$ is called an \emph{exceptional}
collection if each sheaf $\F_i$ is
exceptional and $\Ext^i_{\ox}(\F_k,\F_j)=0$ for $j<k$ and $i \geq0$; moreover it is called a \emph{strongly
exceptional} collection if each $\F_i$ is
exceptional, $\mathrm{Hom}_{\fas{O}_X}(\F_k,\F_j)=0$ for $j<k$ and $\Ext^i_{\fas{O}_X}(\F_j,\F_k)=0$ for
$i\geq 1$ and for all $j,k$. A strongly
exceptional collection $(\F_0,\F_1,...,\F_n)$ of coherent sheaves on $X$ is called \emph{full} if
$\F_0,\F_1,...,\F_n$ generate the bounded derived
category $D^b(X)$. 

We are interested to study the following general problem:  

\begin{prob}
   To characterize smooth projective varieties $X$ which have a full strongly exceptional collection of
coherent sheaves and, even more, if there is
one made up of line bundles. 
  \end{prob}

In this context there is an important conjecture \cite{king} due to King :
\begin{conj}
Every smooth complete toric variety has a full strongly exceptional collection of lines bundles. 
\end{conj}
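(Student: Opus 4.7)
The plan is to attack King's conjecture via the Frobenius morphism, in the spirit of Bondal, B\o gvad and Thomsen---an approach consistent with this paper's title. Any smooth complete toric variety is defined over $\Z$, so the existence of a full strongly exceptional collection of line bundles can be transferred to the closed fibres $X_p$ over $\mathrm{Spec}\,\overline{\mathbb{F}}_p$ and then lifted back to characteristic $0$ by semicontinuity, since the vanishing conditions defining the strongly exceptional property are Zariski-open.

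Working in characteristic $p$, I would invoke Thomsen's theorem: for every $e \geq 1$, the iterated Frobenius pushforward $F^e_* \fas{O}_X$ decomposes as a direct sum of line bundles, indexed combinatorially by lattice points in a scaled fundamental domain of the fan. Let $\mathcal{L}_e = \{L_1, \ldots, L_{N(e)}\}$ be the underlying finite set of these summands. Since $F^e_* \fas{O}_X$ is a classical generator of $D^b(X)$ by Bondal's argument, $\mathcal{L}_e$ generates the derived category, which takes care of \emph{fullness}. The next step is to show that for $e \gg 0$ the bundle $\bigoplus_i L_i$ is tilting, i.e.\ $\Ext^k_{\ox}(L_i, L_j)=0$ for all $k \geq 1$ and all $i,j$; combined with the automatic exceptionality of each line bundle, this yields the bulk of the strongly exceptional property.

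The total order turning $(L_1, \ldots, L_{N(e)})$ into a strongly exceptional \emph{sequence} would be read off the weight-lattice grading inherited from the toric structure: a compatible total order on characters of the torus induces an order on the summands for which $\Hom_{\ox}(L_k, L_j) = 0$ whenever $j<k$. The same combinatorial labelling should also ensure that each $L_i$ appears with multiplicity one in $F^e_* \fas{O}_X$, as is needed to make the sequence (rather than a direct sum with repetitions) genuinely exceptional.

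The hardest step, and the one where the argument realistically breaks in full generality, is the vanishing of $\Ext^k_{\ox}(L_i, L_j)$ for $k \geq 1$: this amounts to $H^{\geq 1}(X, L_j \otimes L_i^{-1}) = 0$ for every pair $i,j$, and the differences $L_j \otimes L_i^{-1}$ are typically neither nef nor anti-nef on a generic smooth complete toric variety, so Demazure vanishing does not apply directly. I expect the strategy to go through under a positivity hypothesis on $-K_X$ (the Fano condition that the present paper exploits when restricting to $3$-folds), and in sufficiently low dimension where the fans admit a complete combinatorial classification; a uniform argument covering every smooth complete toric $X$ would require either a fine classification of the Frobenius summands produced by Thomsen's theorem or an altogether different construction of the exceptional collection, and it is at this step that I would expect to be forced back to the Fano case treated in the paper.
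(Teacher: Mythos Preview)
The statement you are attempting to prove is King's conjecture, and the paper does \emph{not} prove it; on the contrary, immediately after stating it the paper recalls that Hille and Perling produced a counterexample (a smooth non-Fano toric surface with no full strongly exceptional collection of line bundles). So the conjecture is false in the stated generality, and no proof strategy can succeed. Your own final paragraph essentially locates the obstruction correctly: the vanishing $H^{\geq 1}(X, L_j\otimes L_i^{-1})=0$ for all pairs of Frobenius summands simply fails on general smooth complete toric varieties, and this is not a technical gap but a genuine failure of the statement.

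Even restricted to the Fano case that the paper actually treats, several steps of your outline would not go through as written. First, the set of distinct summands of $(\pi_p)_*\fas{O}_X$ need not have the right cardinality: for $\mathcal{D}_1$ the paper computes nine distinct summands while $\mathrm{rk}\,K_0(\mathcal{D}_1)=8$, so one must discard a summand and then \emph{prove} fullness of the remaining eight by an ad hoc Koszul argument. Second, the claim that a ``weight-lattice grading'' automatically produces a total order with $\Hom(L_k,L_j)=0$ for $j<k$ is not justified; in the paper the order and the Hom-vanishing are checked by hand using the explicit description of effective divisor classes. Third, the Ext-vanishing is established case by case via Musta\c{t}\u{a}'s vanishing and the Borisov--Hua forbidden-set criterion, not by any uniform semicontinuity or Demazure-type argument. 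In short, the paper's method is a classification-based verification for toric Fano $3$-folds, not a general mechanism, and your proposal does not supply the missing uniform argument---because none exists.
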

Kawamata \cite{kawamata} proved that the derived category of a smooth complete toric variety has a full
exceptional collection of objects, but the
objects in these collections are sheaves rather than line bundles and the collection is only exceptional and
not strongly exceptional.
In the toric context, there are many contributions to the above conjecture. It turns out to be true for
projective spaces \cite{beilinson},
multiprojective spaces \cite[Proposition 4.16]{tilting}, smooth complete toric varieties with Picard number
$\leq2$ \cite[Corollary 4.13]{tilting} and
smooth complete toric varieties with a splitting fan \cite[Theorem 4.12]{tilting}.

Recently, in \cite{counterexample}, Hille and Perling had constructed a counterexample at the King's
Conjecture, precisely an example of smooth non Fano
toric surface which does not have a full strongly exectional collection made up of line bundles. 
In the Fano context, there are some numerical evidences (see \cite{smallpicard}) which give support to the
following conjecture due to Costa and
Mir\'{o}-Roig \cite[Conjecture 3.6]{frobenius}:
\begin{conj}
Every smooth complete Fano toric variety has a full strongly exceptional collection of line bundles.
\end{conj}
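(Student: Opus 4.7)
The plan is to pursue the Frobenius-morphism strategy announced in the title, in the spirit of Bondal and of Costa and Mir\'o-Roig \cite{frobenius}. Let $X$ be a smooth complete toric Fano variety over $K$. The first step is to reduce to positive characteristic: since cohomology of line bundles on a smooth complete toric variety is computed combinatorially from the fan and is independent of the characteristic, proving that a given set of line bundles forms a full strongly exceptional collection over a field of characteristic $p>0$ will force the same conclusion over $K$. Over such a field, let $F\colon X\to X$ denote the absolute Frobenius and $F^{r}$ its $r$-fold iterate. Thomsen's theorem gives an explicit decomposition
\[
F^{r}_{*}\fas{O}_{X}=\bigoplus_{D\in S_{r}}\fas{O}_{X}(-D),
\]
and as $r$ grows the set of isomorphism classes that appear stabilises to a finite set $\mathcal{L}(X)\subset\Pic(X)$, which I would take as the candidate collection.

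To verify the three required properties I would proceed as follows. For generation of $D^{b}(X)$ I would use that $F^{r}$ is finite and flat, so $\fas{O}_{X}$ is a direct summand of $F^{r}_{*}\fas{O}_{X}$ and hence the line bundles in $\mathcal{L}(X)$ classically generate the derived category by the Bondal--Van den Bergh splitting argument. The vanishing $\Ext^{i}(L_{j},L_{k})=0$ for $i\geq 1$ between members reduces to $H^{i}(X,L_{k}\otimes L_{j}^{-1})=0$ for the finitely many differences occurring in $\mathcal{L}(X)$; for this I would combine Kodaira--Akizuki--Nakano vanishing on the Fano $X$ with the explicit combinatorial description of the cohomology of toric line bundles via Demazure's formula. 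Strong exceptionality in the ``wrong direction'', $\Hom(L_{k},L_{j})=0$ for $j<k$, would then be enforced by ordering $\mathcal{L}(X)$ compatibly with the numerical degree against $-K_{X}$, so that the differences $L_{k}\otimes L_{j}^{-1}$ for $j<k$ have no global sections.

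The principal obstacle is this last ordering/vanishing step. Although $\mathcal{L}(X)$ is an explicit combinatorial object attached to the fan, it is not clear \emph{a priori} that the set of pairwise differences has the required sign pattern uniformly across all smooth Fano fans in arbitrary dimension: there is no single convex-geometric estimate that handles every reflexive polytope, and counterexamples in the non-Fano toric case (such as Hille--Perling's \cite{counterexample}) show that some genuinely Fano-specific input is necessary. For this reason a direct attack on the conjecture in full generality will almost certainly require either a new vanishing theorem for Frobenius splitting classes of Fano toric varieties, or an inductive reduction to lower dimensions via equivariant projective-bundle and blow-up formulas; the present paper, by establishing the statement in dimension three, supplies the first non-trivial base case beyond dimensions one and two for such an inductive programme.
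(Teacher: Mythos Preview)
The statement you are trying to prove is recorded in the paper as a \emph{conjecture}, not a theorem; the paper does not claim a proof of it in arbitrary dimension. What the paper actually proves is the three-dimensional case (the Main Theorem), and it does so not by a uniform argument but by invoking Batyrev's classification of smooth toric Fano 3-folds and treating the five outstanding varieties $\mathcal{D}_1,\mathcal{D}_2,\mathcal{E}_1,\mathcal{E}_2,\mathcal{E}_4$ one at a time: for each, Thomsen's algorithm is run to list the summands of $(\pi_p)_*\fas{O}_X^{\vee}$, a suitable subcollection is extracted, fullness is checked (sometimes via an auxiliary Koszul resolution), and the strong exceptionality vanishings are verified by hand using the Borisov--Hua forbidden-set criterion.

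Your outline, by contrast, aims at the general conjecture and has genuine gaps beyond the one you flag. First, the stabilised Frobenius set $\mathcal{L}(X)$ need not have cardinality equal to $\mathrm{rk}\,K_0(X)$: already for $\mathcal{D}_1$ in this paper one obtains nine line bundles while $\mathrm{rk}\,K_0(\mathcal{D}_1)=8$, so $\mathcal{L}(X)$ cannot be exceptional as it stands and one must both prune it and then re-prove generation for the smaller list --- a step your plan omits. Second, the $\Ext^i$-vanishing for the pairwise differences does not follow from Kodaira--Akizuki--Nakano: many of the relevant line bundles are neither ample nor anti-ample nor of the form $\fas{O}(-K_X-D)$ with $D$ effective, and the paper is forced to use the Borisov--Hua necessary-and-sufficient acyclicity criterion together with a case analysis of ``forbidden'' index sets. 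Third, ordering by degree against $-K_X$ does not in general separate the $\Hom$'s: several members of the collections here have the same anticanonical degree, and the actual verification in the paper proceeds by showing each negative difference is not linearly equivalent to an effective toric divisor. In short, your proposal is a reasonable programme, but it is not a proof, and the paper does not supply one either --- only the classification-based 3-fold case.
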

But the Fano hypotesis is not necessary, in fact, in Theorem 4.12 \cite{tilting}, Costa and Mir\'{o}-Roig
constructed full strongly exceptional
collections of line bundles on families of smooth complete toric varieties none of which is entirely of Fano
varieties.

In this article we will prove the above conjecture for Fano toric 3-folds, more specifically:
\begin{teo*}[Main Theorem \ref{mainthm}]
All smooth toric Fano 3-folds have a full strongly exceptional collection made up of line bundles.
\end{teo*}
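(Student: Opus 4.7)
\section*{Proof proposal}

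The plan is to proceed by a case analysis based on Batyrev's classification of smooth toric Fano 3-folds (there are 18 isomorphism classes, organized by Picard number $\rho$ running from $1$ to $5$). A substantial portion of these are already handled by results cited in the introduction: the variety $\p^3$ (and more generally all $\p^n$) is settled by Beilinson; the Fano 3-folds with $\rho\le 2$ are covered by \cite[Corollary 4.13]{tilting}; varieties admitting a splitting fan (projective bundles over projective spaces, iterated $\p^1$-bundles, and their blow-ups along torus-invariant subvarieties of a suitable form) are covered by \cite[Theorem 4.12]{tilting}; and products such as $\p^1\times\p^2$, $(\p^1)^3$ fall under \cite[Proposition 4.16]{tilting}. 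So the first step is to inspect Batyrev's list and isolate exactly those toric Fano 3-folds not already covered by a cited theorem.

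For the remaining cases the strategy suggested by the title is to exploit the Frobenius morphism. More precisely, one reduces modulo a prime $p$ and uses Thomsen's theorem that on a smooth toric variety in characteristic $p$ the pushforward $F^r_*\fas{O}_X$ under the $r$-th iterate of the absolute Frobenius splits as a direct sum of torus-invariant line bundles, whose classes can be read off combinatorially from the fan. The candidate collection $\mathcal{L}$ of line bundles on each remaining $X$ is precisely the set of (twists of the) summands appearing in $F^r_*\fas{O}_X$ for $r\gg 0$; this is the approach of Bondal and of Costa--Mir\'o-Roig in \cite{frobenius}. Then one must lift this collection back to characteristic $0$ and verify the three conditions: strongness, exceptionality, and fullness.

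The concrete verification breaks into three steps for each of the remaining 3-folds. First, one orders the line bundles in $\mathcal{L}$ so that $\Hom$ groups vanish in the wrong direction; since these are line bundles on a toric variety, this reduces to computing $\coho{0}{X}{\fas{L}_k\otimes\fas{L}_j^{-1}}$ combinatorially via the polytope of sections. Second, one checks the vanishing $\Ext^i_{\fas{O}_X}(\fas{L}_j,\fas{L}_k)=\coho{i}{X}{\fas{L}_k\otimes\fas{L}_j^{-1}}=0$ for $i\ge 1$; on a toric variety this is purely combinatorial through Demazure vanishing or direct \v{C}ech computations on the fan, and for Fano 3-folds the nef/ample structure makes the higher cohomology very constrained. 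Third, one proves fullness by producing, for each torus-invariant prime divisor or for $\fas{O}_X$ itself, a resolution by elements of $\mathcal{L}$: equivalently, one checks that the subcategory $\seq{\mathcal{L}}$ generated by the candidate collection contains enough structure sheaves (or line bundles) to generate $D^b(X)$ by Beilinson-style induction along toric strata.

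The main obstacle I expect is the bookkeeping and combinatorial verification in the third step for the 3-folds with $\rho=4$ and $\rho=5$, where the collection $\mathcal{L}$ has many members and the potential $\Ext^1$-obstructions are numerous. For strongness the cohomology vanishings are in principle a finite check per variety, but for fullness one needs to exhibit, for each torus-invariant divisor and each intersection of such divisors, an explicit Koszul-type resolution by elements of $\mathcal{L}$; matching the line bundles predicted by Frobenius with those needed to write these resolutions is the delicate point. If any variety in the remaining list resists this direct check, the fallback is to express it as a toric blow-up $\pi:X\to Y$ of a variety already handled, and to enlarge the collection on $Y$ by $\pi^*$-pullbacks together with twists of $\fas{O}_E$ for the exceptional divisor $E$, using the semiorthogonal decomposition for blow-ups to transport strongness and fullness from $Y$ to $X$.
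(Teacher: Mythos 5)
Your plan is essentially the paper's: Batyrev's classification, the observation that Types I--III and V are already covered by Costa--Mir\'o-Roig, and for the five remaining varieties ($\duno$, $\ddue$, $\euno$, $\edue$, $\equat$) a candidate collection extracted from the splitting of the Frobenius pushforward via Thomsen's algorithm, followed by combinatorial cohomology vanishing. Two remarks on where your route diverges from, or falls short of, what is actually needed. First, fullness: the paper does not build Beilinson-style resolutions along toric strata, it simply invokes Bondal's theorem that the direct summands of $(\pi_p)_*(\fas{O}_X)^\vee$ generate $D^b(X)$ (and note the toric Frobenius $\pi_p$ is defined over the characteristic-zero field itself, as the $p$-th power map on the torus, so no reduction mod $p$ and lifting is involved). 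The one place where something nontrivial must be done is $\duno$: there Thomsen's algorithm produces \emph{nine} distinct line bundles while $\mathrm{rk}\,K_0(\duno)=8$, so the raw Frobenius output cannot possibly be a full strongly exceptional collection; the paper discards $\fas{O}(Z_6-Z_4)$ and recovers fullness of the remaining eight bundles via the Koszul complex attached to the primitive collection $\{v_1,v_2,v_4\}$. Your plan does not anticipate this mismatch, although your idea of exhibiting explicit Koszul-type resolutions is exactly the tool used to fix it. Second, the vanishing step: Demazure-type vanishing will not reach the many non-nef, non-effective differences $\fas{L}_\beta\otimes\fas{L}_\alpha^{-1}$ that occur; the paper combines Mustata's vanishing (for differences equivalent to sums of distinct toric divisors) with the Borisov--Hua forbidden-set criterion, which gives a necessary and sufficient acyclicity test and is what makes the finite check feasible. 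Finally, your fallback via the blow-up semiorthogonal decomposition would adjoin twists of $\fas{O}_E$ for the exceptional divisor, which are torsion sheaves rather than line bundles on $X$, so it could not by itself establish the statement as phrased; fortunately it is never needed.
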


In order to prove the above theorem, we will refer to the classification of Fano toric 3-folds due to Batyrev
(\cite{batyrev}). In this classification
there are some known case, i.e. some class of Fano toric 3-folds are just known to have a full strongly
exceptional collection made up of line
bundles.
For each unknown case we will use a method \cite{bondal} due to Bondal and an algorithm \cite{thomsen} by
Thomsen to produce a candidate to became a
full strongly exceptional collection. Then using vanishing theorems we will check that these candidates are
indeed a full strongly exceptional
collection.

\section{The Classification of Fano Toric 3-Folds}
In this section we briefly recall the usual notation and terminology for toric varieties and we present the
classification of  toric Fano 3-folds, due
to Batyrev (\cite{batyrev}).
\begin{defin}
A complete \emph{toric variety} of dimension $n$ over an algebraically closed field $K$ of characteristic $0$
is a smooth variety $X$ that contains a
torus $T=(\C^*)^n$ as a dense open subset, together with an action of $T$ on $X$, that extends the natural
action of $T$ on itself. Let $N$ be a
lattice in $\R^n$, by a \emph{fan} $\Sigma$ of strongly convex polyhedral cones in $N_{\R}:=N\otimes_{\Z} \R$
is meant a set of rational strongly
convex polyhedral cones $\sigma$ in $N_{\R}$ such that:
\begin{enumerate} 
\item each face of a cone in $\Sigma$ is also a cone in $\Sigma$;
\item the intersection of two cone in $\Sigma$ is a face of each.
\end{enumerate}
We will assume that a fan is finite.
\end{defin}

It is known that a complete toric variety $X$ is characterized by a fan\linebreak $\Sigma:=\Sigma(X)$. Let
$M:=\Hom_{\Z}(N,\Z)$ be the dual lattice of
$N$, if
$e_0,...,e_{n-1}$ is the basis of $N$, we indicate by $\hat{e}_0,...,\hat{e}_{n-1}$ the dual base. If $\sigma$
is a cone in $N$, the dual cone
$\sigma^{\dual}$ is the subset of $M$ defined as:
$$\sigma^\vee:=\{\eta\in M\:|\:\eta(v)\geq 0\;\text{for all $v\in N$}\}.$$
So we have a commutative semigroup
$S_{\sigma}:=\sigma^{\dual}\cap M$ and an open affine toric subvariety $U_{\sigma}:=\spec(K[S_{\sigma}])$. 

From a fan $\Sigma$, the toric variety $X(\Sigma)$ is constructed by taking the disjoint union of the affine
toric subvarieties $U_{\sigma}$, one for
each $\sigma\in \Sigma$ and gluing.
Conversely, any toric variety $X$ can be realized as $X(\Sigma)$ for a unique fan $\Sigma$ in $N$.
The toric variety $X(\Sigma)$ is smooth if and only if any cone $\sigma\in \Sigma$ is generated by a part of a
basis of $N$.
\begin{defin}
We put $\Sigma(i):=\{\sigma\in \Sigma\mid \dim \sigma=i \}$, for any $0\leq i\leq n$. There is a unique
generator $v\in N$ for any 1-dimensional cone
$\sigma\in \Sigma(1)$ such that\linebreak $\sigma\cap N=\Z_{\geq 0}\cdot v$ and it is called a \emph{ray
generator}. There is a one-to-one
correspondence between the ray generators $v_1,...,v_k$ and the toric divisors $D_j$. Moreover, $D_1\cap
D_2\cap ... \cap D_k=0$ if and only if the
corresponding vectors $v_1, v_2,...,v_k$ span a cone in $\Sigma$. 
\end{defin}
If $X$ is a smooth toric variety of dimension $n$ and $m$ is the number of toric divisor of $X$ (hence $m$ is
also the number of $1-$dimensional rays
generator in $\Sigma(X)$) then we have an exact sequence of $\Z-$modules:
\begin{displaymath}
\xymatrix{
0 \ar[r] & M \ar[r] & \Z^m \ar[r] & \Pic(X) \ar[r] & 0}.
\end{displaymath}
Therefore we have that the Picard number of $X$ is $\rho=m-n$ and the anticanonical divisor is given by
$-K_X=D_1+...+D_m$. The relations among the
toric divisors are given by $\sum^m_{i=1}<u,v_i>D_i=0$ for $u$ in a basis of $M=\Hom(N;\Z)$. A smooth toric
Fano variety $X$ is a smooth toric variety
with anticanonical divisor $-K_X$ ample.
\begin{defin}
A set of toric divisors $\{D_1,...,D_k\}$ on $X(\Sigma)$ is called a \emph{primitive
set} if $D_1\cap...\cap D_k=\emptyset$ but $D_1\cap...\cap\hat{D_j} \cap...\cap D_k\neq \emptyset$ for all
$j$, with\linebreak $1\leq j\leq k$.
Equivalently, this means $< v_1,..., v_k >\notin \Sigma$ but $< v_1,...,\hat{v_j},..., v_k >\in \Sigma$ for
all $j$, with $1\leq j\leq k$, and $P =
\{v_1,..., v_k\}$ is called a \emph{primitive collection}. If $S := \{D_1,...,D_k\}$ is a primitive set, the
element $v := v_1 + ... + v_k$ lies in the relative interior of a unique cone of $\Sigma$, if the cone is
generated by ${v'}_1,..., {v'}_s$, then $v_1
+ ... + v_k =a_1{v'}_1+...+a_s{v'}_s$ with $a_i > 0$ is the corresponding \emph{primitive relation}.
\end{defin}
In terms of primitive collections and relations we have a nice criterion for checking if a smooth toric
variety is Fano or not. A smooth toric variety
$X(\Sigma)$ is Fano if and only if for every primitive relation $$v_{i_1} +...+ v_{i_k}-c_1v_{j_1}-...- c_r
v_{j_r} = 0$$ one has $k -\sum^r_{i=1}c_i
> 0$.
We racall that if we indicate by $K_0(X)$ Grothendieck group, we known that its rank is equal to the
number of maximal cones of $\Sigma$, in
particular for a smooth toric Fano $3-$folds $X(\Sigma)$ we can calculate it only knowing the number
$\upsilon$ of vertices of $\Sigma$ i.e.
$rank(K_0(X(\Sigma)))=2\upsilon-4$ and $\rho=\upsilon-3$, where $\upsilon$ is the number of ray generators
of the toric variety $X$. 

\pagebreak
In the table below we give the list of $3$-dimensional toric Fano varieties $V=V(P)$. We denote by $S_i$ the
Del Pezzo surface obtained blowing up of
$i$ points on $\mathbb{P}^2$. The numbers $\upsilon, \rho$ and $k_0$ denote respectively the number of
vertices, the Picard number and the rank of the
Grothendieck group.

\begin{center}
\begin{tabular}{llccc}
\toprule
  &Class of Toric Fano 3-folds  & $\upsilon$ & $\rho$ & $k_0$ \\
\midrule
Type I & $\bbP^3$ & 4 &  1 & 4  \\
\midrule
&$\bbP_{\bbP^2}(\sO \oplus \sO(2))$ & 5 & 2 & 6 \\
 &$\bbP_{\bbP^2}(\sO \oplus \sO(1))$ & 5 & 2 & 6 \\
Type II &$\bbP_{\bbP^2}(\sO \oplus \sO \oplus \sO(1))$ & 5 & 2 & 6 \\
 &$\bbP^2\times \bbP^1$ & 5 & 2 & 6 \\
 &$\bbP_{\bbP^1\times \bbP^1}(\sO\oplus \sO(1,1))$ & 6 & 3 & 8 \\
 &$\bbP_{S_1}(\sO\oplus \sO(l))$, $l^2=1$ on $S_1$ & 6 & 3 & 8 \\
\midrule 
&$\bbP^1\times \bbP^1 \times \bbP^1$ & 6 & 3 & 8 \\
 &$S_1\times \bbP^1 $ & 6 & 3 & 8 \\
Type III &$\bbP_{\bbP^1\times\bbP^1}(\sO\otimes \sO(1,-1))$& 6 & 3 & 8 \\
 &$S_2\times \bbP^1$& 7 & 4 & 10 \\
 &$S_3\times \bbP^1$& 8 & 5 & 12 \\
\midrule
&$Bl_{\bbP^1}(\bbP_{\bbP^2}(\sO \oplus \sO(1)))$& 6 & 3 & 8 \\
 &$Bl_{\bbP^1}(\bbP^2\times \bbP^1)$& 6 & 3 & 8 \\
 Type IV 
&$S_2-$bundle over $\bbP^1$ & 7 & 4 & 10 \\
& $S_2-$bundle over $\bbP^1$ & 7 & 4 & 10 \\
 &$S_2-$bundle over $\bbP^1$& 7 & 4 & 10 \\
\midrule
 Type V
 &$S_3-$bundle over $\bbP^1$& 8 & 5 & 12 \\
\bottomrule
\end{tabular}
\end{center}

The following theorem is due to Costa and Mir\occ-Roig (see Theorem 4.21 \cite{tilting}):
\begin{thm}
Any smooth toric Fano $3-$fold of Type I,II or III has a strongly exceptional sequence made up of line
bundles.
\end{thm}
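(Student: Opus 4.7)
The plan is to process the twelve varieties of Types I, II, and III one by one, matching each to an existing result on full strongly exceptional collections of line bundles that was already cited in the introduction. The tools available are Beilinson's collection for $\bbP^n$ \cite{beilinson}, the result \cite[Proposition 4.16]{tilting} for multiprojective spaces, \cite[Corollary 4.13]{tilting} for smooth complete toric varieties with Picard number $\rho\leq 2$, and \cite[Theorem 4.12]{tilting} for smooth complete toric varieties whose fan is a splitting fan. I would partition the list above according to which of these four statements applies to each entry, so the proof becomes a pointer to the correct reference in each row of Batyrev's table.

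More concretely: for Type I the variety is $\bbP^3$ and Beilinson's theorem supplies the collection $\sO,\sO(1),\sO(2),\sO(3)$. In Type II, the first four varieties have $\rho=2$ and fall under \cite[Corollary 4.13]{tilting}; the remaining two, $\bbP_{\bbP^1\times\bbP^1}(\sO\oplus\sO(1,1))$ and $\bbP_{S_1}(\sO\oplus\sO(l))$, are projectivizations of decomposable rank-two bundles over toric surfaces whose fans split, and this splitting property is inherited by the projective bundle, so \cite[Theorem 4.12]{tilting} applies. In Type III, $\bbP^1\times\bbP^1\times\bbP^1$ is multiprojective and is covered by \cite[Proposition 4.16]{tilting}; each product $S_i\times\bbP^1$ for $i=1,2,3$ is a Cartesian product of two toric varieties with splitting fans; and $\bbP_{\bbP^1\times\bbP^1}(\sO\oplus\sO(1,-1))$ is again a projectivization of a decomposable rank-two bundle over a base with splitting fan. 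Each of these last three situations is then closed by \cite[Theorem 4.12]{tilting}.

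The main obstacle will be the mechanical but attentive bookkeeping needed to extract the primitive collections and primitive relations of each variety from Batyrev's classification and to confirm that in each case the fan is genuinely a splitting fan in the sense required by \cite[Theorem 4.12]{tilting}. Specifically, I would need to verify that the toric del Pezzo surfaces $S_1,S_2,S_3$, regarded as iterated equivariant blow-ups of $\bbP^2$ at torus-fixed points, inherit a splitting fan from $\bbP^2$; that the splitting property is preserved under Cartesian products; and that the projectivization of a decomposable rank-two bundle over a splitting-fan base again has a splitting fan. Once these structural verifications are in place, the cited theorems conclude the proof. No vanishing-of-cohomology argument is required here, which is precisely the difference with Types IV and V that motivates the rest of the article.
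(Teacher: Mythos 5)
There is a genuine gap, and it is concentrated in two of the twelve varieties: $S_2\times\bbP^1$ and $S_3\times\bbP^1$ in Type III. Your reduction hinges on the claim that the toric del Pezzo surfaces $S_1,S_2,S_3$ "inherit a splitting fan from $\bbP^2$" under equivariant blow-up, but the splitting-fan property is not preserved by blow-ups, and in fact $S_2$ and $S_3$ do not have splitting fans. Recall that a splitting fan (Batyrev's notion, which is the hypothesis of \cite[Theorem 4.12]{tilting}) requires any two distinct primitive collections to be disjoint; equivalently, the variety is an iterated projectivization of decomposable bundles. For $S_2$, with rays $u_1=(1,0)$, $u_2=(0,1)$, $u_3=(-1,-1)$, $u_4=(1,1)$, $u_5=(0,-1)$ and maximal cones $\langle u_1,u_4\rangle$, $\langle u_4,u_2\rangle$, $\langle u_2,u_3\rangle$, $\langle u_3,u_5\rangle$, $\langle u_5,u_1\rangle$, the primitive collections include $\{u_1,u_2\}$ and $\{u_1,u_3\}$, which share $u_1$; so the fan does not split (indeed the only toric surfaces with splitting fans are $\bbP^2$ and the Hirzebruch surfaces, and $S_2,S_3$ are neither). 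Consequently $S_2\times\bbP^1$ and $S_3\times\bbP^1$ have no splitting fan either, they are not products of projective spaces, and their Picard numbers are $4$ and $5$, so none of the four results you invoke (\cite{beilinson}, \cite[Proposition 4.16]{tilting}, \cite[Corollary 4.13]{tilting}, \cite[Theorem 4.12]{tilting}) applies to them. The rest of your case division is fine: $\bbP^3$, the $\rho=2$ cases, $\bbP^2\times\bbP^1$, $\bbP^1\times\bbP^1\times\bbP^1$, $S_1\times\bbP^1$ (since $S_1$ is the Hirzebruch surface $F_1$), and the projectivizations of decomposable bundles over $\bbP^1\times\bbP^1$ and $S_1$ are all genuinely covered.

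Note also that the paper does not reprove this statement at all: it simply cites \cite[Theorem 4.21]{tilting}, where Costa and Mir\'o-Roig treat the Fano $3$-folds of Types I--III, including the two products above, by arguments going beyond the four results listed in the introduction (for instance, one needs that the del Pezzo surfaces $S_2,S_3$ themselves carry full strongly exceptional collections of line bundles and that such collections behave well under taking a product with $\bbP^1$, via a K\"unneth-type argument). If you want your reduction to stand on its own, you must supply such an argument for $S_2\times\bbP^1$ and $S_3\times\bbP^1$; as written, the claim that these are covered by the splitting-fan theorem is false.
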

In a more recent paper (\cite[Proposition 2.5]{frobenius}) they also proved, using the same tecniques we will
use in this article, this theorem below:
\begin{thm}
 The smooth toric Fano $3-$ fold of Type V has a strongly exceptional sequence made up of line bundles.
\end{thm}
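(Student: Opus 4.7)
The approach I would follow mirrors the Bondal--Thomsen strategy that this paper uses for the Type~IV cases. From Batyrev's classification, Type~V is the unique Fano 3-fold which is an $S_3$-bundle over $\bbP^1$; it has Picard number $\rho=5$, eight ray generators, and $\mathrm{rk}\,K_0(X)=12$, so the target is a full strongly exceptional collection of exactly twelve line bundles. The first step is to write down the fan of this variety explicitly, recording the eight ray generators $v_0,\ldots,v_7$, the maximal cones, and the primitive collections that will be needed in the subsequent verification.

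Next, I would pass to positive characteristic by base-changing to a perfect field $k$ of characteristic $p>0$, on which the absolute Frobenius $F\colon X\to X$ is a finite flat morphism. By Thomsen's theorem, $F_*\sO_X$ decomposes as a direct sum of line bundles whose Cartier divisor classes are determined combinatorially from the values of the piecewise-linear support functions $l_\sigma$ associated to the maximal cones $\sigma\in\Sigma$ and the prime $p$. For $p$ sufficiently large, Bondal proved that these summands generate $D^b(X)$. Running Thomsen's algorithm for a conveniently small prime then produces an explicit finite list of line bundles $L_0,\ldots,L_s$; I would check that $s+1=12$, obtaining a candidate sequence $\mathcal{L}=(L_0,\ldots,L_{11})$, ordered so that each difference $L_k\otimes L_j^{-1}$ with $j<k$ lies (up to sign) in a suitable face of the effective cone.

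The last step is verification. Fullness follows from Bondal's generation theorem combined with the equality $|\mathcal{L}|=\mathrm{rk}\,K_0(X)$, since an exceptional collection of the correct length in a triangulated category generated by the same objects must be full. The strongly exceptional property reduces to the vanishings $H^i(X,L_j\otimes L_k^{-1})=0$ for all $i\ge 1$ and all $j,k$, together with $H^0(X,L_j\otimes L_k^{-1})=0$ whenever $j<k$. On a smooth projective toric variety these vanishings are computable combinatorially via Demazure's formula, which expresses each toric cohomology group as a sum over lattice points of an explicit polyhedron attached to the fan and to the class of the divisor; in favorable cases the Kawamata--Viehweg vanishing theorem applies directly to the nef and big twists.

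The main obstacle is precisely this last verification: with twelve bundles there are roughly $144$ pairs to inspect in each cohomological degree, and many of the relevant twists fail to be nef, so Kawamata--Viehweg does not cover them and one must instead exhibit, via Demazure's formula, that the associated lattice polytopes contain no integral points contributing to the cohomology in question. What makes the strategy work in the end is that the candidates produced by Thomsen's algorithm are by construction cohomologically well-behaved, so that with the right ordering the check, though lengthy, goes through without ad hoc modifications to $\mathcal{L}$.
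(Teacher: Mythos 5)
Your overall strategy (toric Frobenius, Thomsen's decomposition, Bondal's generation theorem, then cohomology vanishing) is indeed the technique relevant here; in fact the paper does not reprove this statement at all but quotes it from Costa and Mir\'o-Roig, who establish the $S_3$-bundle over $\bbP^1$ case by exactly this method. The genuine gap is that your text never becomes a proof: you do not write down the fan of the Type V variety, you do not run Thomsen's algorithm to exhibit the actual line bundles, and you do not verify a single vanishing. For a theorem of this case-by-case computational nature, the explicit list and the explicit checks \emph{are} the content, and you defer both.

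Moreover, the two assumptions that let you declare the verification routine are unjustified. First, you assume the number of distinct Frobenius summands equals $\mathrm{rk}\,K_0(X)=12$; this paper itself shows that this can fail --- for $Bl_{\bbP^1}(\bbP_{\bbP^2}(\sO\oplus\sO(1)))$ Thomsen's algorithm returns nine bundles against $\mathrm{rk}\,K_0=8$, and one must discard a summand and then reprove generation of the remaining ones by hand (via a Koszul complex attached to a primitive collection). Your fallback, that an exceptional collection of maximal length inside a category generated by the same objects is automatically full, is not a principle you may invoke without proof; fullness has to be argued directly. Second, your closing claim that the Thomsen candidates are ``by construction cohomologically well-behaved'', so the strong exceptionality check goes through with no ad hoc modification, is false in general: Bondal himself noted that for some toric Fano 3-folds the summands do not form a strongly exceptional collection, which is precisely why the bulk of this paper consists of acyclicity checks via Mustata's vanishing and the Borisov--Hua criterion, and why the analogous work is needed for Type V. A smaller point: the morphism used here is the toric multiplication-by-$p$ endomorphism in characteristic zero, not the absolute Frobenius after reduction modulo $p$; if you really base-change to characteristic $p$ you owe an additional (e.g.\ semicontinuity) argument to bring the generation statement and the vanishings back to the original field.
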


\section{Bondal's Method and Thomsen's Algorithm}
We want to explain a method due to Bondal that we will use in the sequel. For that we recall briefly some
definitions and facts.
\begin{defin} 
Let $X$ be a smooth projective variety defined over an algebraically closed field $K$ of characteristic $0$.
\begin{itemize}
\item A coherent sheaf $\F$ on $X$ is \emph{exceptional} if $\Hom_{\ox}(\F,\F)=K$ and\linebreak
$\Ext^i_{\ox}(\F,\F)=0$
for $i>0$;
\item An ordered collection $(\F_0, \F_1,...,\F_m)$ of coherent sheaves on $X$ is an \emph{exceptional}
collection if each sheaf $\F_i$ is exceptional and\linebreak $\Ext^i_{\ox}(\F_k,\F_j)=0$ for $j<k$ and $i
\geq0$;
\item An exceptional collection $(\F_0,\F_1,...,\F_m)$ is a \emph{strongly exceptional} collection if in
addition $\Ext^i_{\ox}(\F_j,\F_k)=0$ for
$i\geq1$ and $j\leq k$;
\item An ordered collection $(\F_0,\F_1,...,\F_m)$ of coherent sheaves on $X$ is a \emph{full} (strongly)
exceptional collection if it is a (strongly) exceptional collection and $\F_0,\F_1,...,\F_m$ generate the
bounded derived category $D^b(X)$.
\end{itemize}
\end{defin}
\begin{rem}
The existence of a full strongly exceptional collection\linebreak $(\F_0,\F_1,...,\F_m)$ of coherent sheaves
on a smooth projective variety X implies
that the Grothendieck group $K_0(X )=K_0(\ox-mod)$ is isomorphic to $\mathbb{Z}^{m+1}$.
\end{rem}

Given any smooth complete toric variety $X$, in \cite{bondal} Bondal described a method to produce a
collection of line bundles on $X$, which is expected to be full
strongly exceptional for certain classes of toric Fano varieties. In particular he stated that for all
the smooth toric Fano $3$-folds but two these sequences were, indeed, strongly exceptional, and he does not
say anything about the remaining cases.
Let
see now how this method works.
Let $X$ be a smooth complete toric variety of dimension $n$ and $T$ an $n-$dimensional torus acting on it. So
for any integer $l\in \mathbb{Z}$,
there is a well-defined toric morphism
$$
\pi_{l}:X\ra X
$$
which restricts, on the torus $T$, to the Frobenius map
$$
\pi_{l}:T\ra T, \;\; t\mapsto t^l.
$$

This map is the factorization map with respect to the action of the group of $l$ torsion of $T$. Let us fix a
prime integer $p\gg0$,
$(\pi_p)_*(\ox)^{\dual}$ is a vector bundle of rank $p^n$ which splits into a sum of line bundles:
$$
(\pi_p)_*(\ox)^{\dual}=\oplus_{\chi}\ox(D_{\chi}),
$$
where the sum is taken over the group of characters of the $p$-torsion subgroup of $T$ (see Theorem 1 and
Proposition 2 \cite{thomsen}).
Moreover,
$$
c_1((\pi_p)_*(\ox)^{\dual})=\ox(-\frac{p^{n-1}(p-1)}{2}K_X)
$$
where $K_X$ is the canonical divisor of $X$. Bondal \cite{bondal} proved that the direct summands of
$(\pi_p)_*(\ox)^{\dual}$ generate the derived
category $D^b(X)$ and Thomsen \cite{thomsen} described an algorithm for computing explicitly the decomposition
of it. Let us summarize it for the case
of a smooth complete toric variety $X$ of dimension $n$, Picard number $\rho$  rank of group of
Grothendieck $s$.

We consider $\{\sigma_1,...,\sigma_s\}$ the set of maximal cones of the fan $\Sigma$ associated to $X$ and we
denote by $v_{i_1},...,v_{i_n}$ the
generators of $\sigma_i$. For each index $1\leq i\leq s$, we denote by $A_i\in \gl_n(\Z)$ the matrix having as
the $j$-th row the coordinates of
$v_{i_j}$ expressed in the basis $e_1,...,e_n$ of $N$ and by $B_i = A^{-1}_i\in\gl_n(\Z)$ its inverse. We
indicate with $w_{ij}$ the $j$-th column
vector in $B_i$. Introducing the symbols $X^{\hat{e^1}},...,X^{\hat{e^n}}$, we form the coordinate ring of the
torus $T\subset X$: 
$$
R=K[(X^{\hat{e^1}})^{\pm 1},...,(X^{\hat{e^n}})^{\pm 1}].
$$
Moreover the coordinate ring of the open affine subvariety $U_{\sigma_i}$ of $X$ corresponding to the cone
$\sigma_i$ is the subring
$$
R_i=K[X^{w_{i1}},...,X^{w_{in}}]\subset R,
$$
where $X^w:=(X^{\hat{e^1}})^{w_1}\cdots (X^{\hat{e^n}})^{w_n}$, if $w=(w_1,...,w_n)$, for simplicity we will
usually write $X_{ij}:=X^{w_{ij}}$.
For each $i$ and $j$, we denote by $R_{ij}$ the coordinate ring of $\sigma_i\cap \sigma_j$ and we define
$I_{ij}:=\{v\in M_{n\times1}(\Z):X^v_i \text{
is a unit in } R_{ij}\}$ and $C_{ij}:=B^{-1}_j B_i\in \gl_n(\Z)$, where we use the notation
$X^v_i:=(X_{i1})^{v_1}\cdots (X_{in})^{v_n}$ being $v$ a
column vector with entries $v_1,...,v_n$.

For every $p\in \N$ and $w\in I_{ij}$ , we define 
$$
P_p:=\{v \in M_{n\time1}(\Z):0\leq v_i <p\}
$$
and the maps
$$
h^w_{ijp}:P_p\ra R_{ij}, \\
r^w_{ijp} : P_p\ra P_p,
$$
by means of the following equality: 
$$
C_{ij}v+ w=p\cdot h^w_{ijp}(v)+r^w_{ijp}(v),\;\; \text{for any}\;\; v\in P_p.
$$
By \cite[Lemma 2 and Lemma 3]{thomsen}, these maps exist and they are unique.

Any toric Cartier divisor $D$ on $X$ can be represented in the form $\{(U_{\sigma_i},X^{u_i}_i)\}_{\sigma_i\in
\Sigma}$, $u_i\in M_{n\times1}(\Z)$
(see \cite[Chapter 3.3]{fulton}). So if we fix one of these representants, we can define
$u_{ij}=u_j-C_{ij}u_i$. If $\ox(D)=\ox$ is the trivial line
bundle, then for any pair $i,j$, we have $u_{ij}=0$.
For any $p\in \Z$ and any toric Cartier divisor $D$ on $X$ , we fix a set
$\{(U_{\sigma_i},X^{u_i}_i)\}_{\sigma_i\in \Sigma}$ representing $D$ and we
choose an index $l$ of a cone $\sigma_l\in \Sigma$. Let $D_v$, $v\in P_p$, denote the Cartier divisor
represented by the set
$\{(U_{\sigma_i},X^{h_i}_i)\}_{\sigma_i\in \Sigma}$ where, by definition $h_i = h^v_i:= h^{u_{li}}_{
lip}(v)$.

Then, we have
$$
(\pi_p)_*(\ox(D))^{\dual} =\bigoplus_{v\in P_p}\ox(D_v).
$$

If $h_i =(h_{i_1},...,h_{i_n})$ and $\alpha^j_{i1},...,\alpha^j_{in}$ are the entries of the $j$-th column
vector of $B_i$, then by definition:
$$
X^{h_i}_i=(X^{\hat{e_1}^{\alpha^1_{i1}}}\cdots
X^{\hat{e_n}^{\alpha^1_{in}}})^{h_{i1}}(X^{\hat{e_1}^{\alpha^2_{i1}}}\cdots
X^{\hat{e_n}^{\alpha^2_{in}}})^{h_{i2}}\cdots (X^{\hat{e_1}^{\alpha^n_{i1}}}\cdots
X^{\hat{e_n}^{\alpha^n_{in}}})^{h_{in}},
$$
and we indicate with:
$$
l_{\sigma_i}:=B_i\cdot
h_i=(\alpha^1_{i1}h_{i1}+\alpha^2_{i1}h_{i2}+...+\alpha^n_{i1}h_{in})\hat{e_1}+(\alpha^1_{i2}h_{i1}+\alpha^2_{
i2}h_{i2}+...+\alpha^n_{i2}h_{in})\hat{
e_2}+...
$$
$$
...+(\alpha^1_{in}h_{i1}+\alpha^2_{in}h_{i2}+...+\alpha^n_{in}h_{in})\hat{e_n}\in M=N^{\vee}.
$$
In this notation, if $D_v$ is the Cartier divisor represented by the set $\{(U_{\sigma_i},X^{h_i}_i)\}$, then 
$$
D_v=\beta^1_v Z_1+...+\beta^{n+\rho}_v Z_{n+\rho}
$$
where $\beta^j_v=-l_{\sigma_k}(v_j)$, for any maximal cone $\sigma_k$ containing the ray generator $v_j$
associated to the toric divisor $Z_j$. We can
observe that for any pair of maximal cones $\sigma_k$ and $\sigma_m$ containing $v_j$,  we have
$l_{\sigma_k}(v_j)=l_{\sigma_m}(v_j)$. 

We will now construct a full strongly exceptional collection for all toric Fano $3-$folds not covered in
\cite{tilting} and \cite{frobenius}. We will
analyze case by case.

\begin{nota}
 From now on, when it will be clear wich variety we will be working with, we shall omit the subscrit when
denoting a line bundle associated to a given divisor $D$: hence we will write $\fas{O}(D)$ instead of
$\fas{O}_X (D)$.
\end{nota}


\subsection{$\mathcal{D}_1=Bl_{\bbP^1}(\bbP_{\bbP^2}(\sO \oplus \sO(1)))$}
\begin{prop}
 Let $p$ be a sufficiently large prime integer and indicate with\linebreak
$\pi_p:\mathcal{D}_1\longrightarrow\mathcal{D}_1$ the Frobenius morphism relative to
$p$. Then
the different summands of $(\pi_p)_*(\fas{O}_{\mathcal{D}_1})^\vee$ are the following:
$$\fas{O},\;\fas{O}(Z_4+Z_5),\;\fas{O}(2Z_4+2Z_5),\;\fas{O}(Z_6),\;\fas{O}(Z_5+Z_6),\;\fas{O}(Z_4+Z_5+Z_6),
\;$$
$$
\fas{O}(Z_4+2Z_5+Z_6),\;\fas{O}(2Z_4+2Z_5+Z_6),\;\mathcal{O}(Z_6-Z_4).
$$
\end{prop}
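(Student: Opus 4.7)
The plan is to apply Thomsen's algorithm described in the previous section directly to $\mathcal{D}_1$, reducing the statement to an explicit finite computation. First one fixes a combinatorial description of the fan $\Sigma$: since $\mathcal{D}_1$ is a smooth toric Fano $3$-fold with $\upsilon=6$ rays and $k_0=2\upsilon-4=8$ maximal cones, one writes down integer coordinates for the six ray generators $v_1,\dots,v_6$ realizing $\mathcal{D}_1$ as the blow-up along an invariant $\mathbb{P}^1$ of $\mathbb{P}_{\mathbb{P}^2}(\mathcal{O}\oplus\mathcal{O}(1))$, and enumerates the eight maximal cones $\sigma_1,\dots,\sigma_8$. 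By smoothness each $\sigma_i$ is generated by a $\mathbb{Z}$-basis of $N$, so the matrices $A_i$ and their inverses $B_i=A_i^{-1}$ have small integer entries.

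Next one fixes a reference index $l$ and computes the change-of-chart matrices $C_{li}=B_i^{-1}B_l$. Since the line bundle being decomposed is $\mathcal{O}_{\mathcal{D}_1}$, the local data $\{(U_{\sigma_i},X_i^{u_i})\}$ can be taken with $u_i=0$, so $u_{li}=0$ and the map $h^{0}_{lip}\colon P_p\to R_{li}$ is simply componentwise integer division: for $v\in P_p$ one writes $C_{li}\,v=p\,h_i^v+r_i^v$ with $0\le (r_i^v)_k<p$, so that $h_i^v$ is the vector of integer quotients. The crucial point that makes the answer independent of $p$ for $p\gg 0$ is that every entry of $h_i^v$ is bounded in absolute value by the $\ell^1$-norm of the corresponding row of $C_{li}$; therefore only finitely many tuples $(h_1^v,\dots,h_8^v)$ can occur as $v$ ranges over $P_p$, and consequently only finitely many divisor classes $D_v$ appear in the sum.

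From each admissible tuple one reads off the functionals $l_{\sigma_i}=B_i\cdot h_i^v\in M$ and computes the coefficients $\beta_v^j=-l_{\sigma_k}(v_j)$ for any maximal cone $\sigma_k$ containing $v_j$ (the compatibility on overlaps $\sigma_k\cap\sigma_m\ni v_j$ noted after the algorithm guarantees this is well-defined). One then rewrites the resulting divisor $D_v=\sum_j\beta_v^jZ_j$ in a basis of $\mathrm{Pic}(\mathcal{D}_1)$ using the three linear relations $\sum_i\langle \hat{e}_t,v_i\rangle Z_i=0$ coming from a basis of $M$; this eliminates three of the six toric divisor classes and allows every $D_v$ to be expressed in terms of $Z_4,Z_5,Z_6$, matching the form in which the nine candidate bundles are stated.

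The main obstacle is purely organizational: the residue analysis of the coordinates of $C_{li}v$ modulo $p$ must be carried out consistently across all eight charts, and one has to recognise which of the many a priori distinct summands actually coincide in $\mathrm{Pic}(\mathcal{D}_1)$ after applying the relations among toric divisors. The expected outcome is that the $p^3$ indices of $P_p$ collapse, for every sufficiently large prime $p$, to exactly the nine isomorphism classes listed in the statement.
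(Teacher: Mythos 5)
Your plan is the right algorithm --- it is exactly Thomsen's algorithm as set up in the paper, with the correct observations that $u_i=0$ works for $\mathcal{O}$, that the $h$-vectors are uniformly bounded so the list of summands stabilizes for $p\gg 0$, and that the coefficients $\beta^j_v=-l_{\sigma_k}(v_j)$ are well defined. But as written it has a genuine gap: it never carries out the computation whose outcome \emph{is} the proposition. Your closing sentence, that the classes are ``expected'' to collapse to exactly the nine listed bundles, asserts the statement rather than proving it. The boundedness argument only shows that \emph{some} finite, $p$-independent list occurs; to prove the proposition you must (i) exhibit, for each of the nine bundles, residues $v\in P_p$ that produce it --- e.g.\ $\mathcal{O}(Z_6-Z_4)$ arises only from vectors with a carry, roughly $2a_3\ge p$ in the relevant chart, so one must check such $v$ exist --- and (ii) rule out every other class by a complete analysis of the sign/carry patterns of the vectors $C_{li}v$. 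Without writing down the ray coordinates, the matrices $A_i,B_i$, and this exhaustive case analysis, the specific list $\mathcal{O},\,\mathcal{O}(Z_4+Z_5),\dots,\mathcal{O}(Z_6-Z_4)$ is not established.

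For comparison, the paper does precisely this finite check, but organizes it more economically than your eight-chart scheme: it takes the primitive relations from Batyrev's classification, chooses only three maximal cones $\{v_1,v_2,v_3\}$, $\{v_1,v_2,v_6\}$, $\{v_1,v_4,v_5\}$ whose rays cover all six toric divisors, and uses the basis cone as reference chart, so $h_1=0$ and the coefficients of $Z_1,Z_2,Z_3$ vanish identically; then $D_v=-l_{\sigma_3}(v_4)Z_4-l_{\sigma_3}(v_5)Z_5-l_{\sigma_2}(v_6)Z_6$ is already expressed in $Z_4,Z_5,Z_6$, with no need for your final reduction modulo the relations in $\mathrm{Pic}(\mathcal{D}_1)$. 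The proof is then an eight-case analysis according to which coordinates $a_i$ of $v$ vanish, with sub-cases for the possible carries. Adopting that reduction would turn your outline into a manageable verification; as it stands, the decisive part of the argument is missing.
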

\begin{proof}
According to \cite[Proposition 2.5.6]{batyrev}, the primitive collections of $\mathcal{D}_1$ are the
following:
$$\{v_3,v_6\},\;\{v_4,v_6\},\;\{v_3,v_5\},\; \{v_1,v_2,v_4\}, \{v_1,v_2,v_5\}.$$
On the other side,  the primitive relations  are:
\begin{itemize}
\item $v_3+v_6=0$;
\item $v_4+v_6=v_5$;
\item $v_3 +v_5=v_4$;
\item $v_1+v_2+v_4=2v_3$;
\item $v_1+v_2+v_5=v_3$.
\end{itemize}
We take the following three maximal cones:
$$ \sigma_1=\{v_1,v_2,v_3\},\;\sigma_2=\{v_1,v_2,v_6\}$$
$$\sigma_3=\{v_1,v_4,v_5\}$$
and, as a basis of $\mathbb{Z}^3$: $e_1=v_1$, $e_2=v_2$ $e_3=v_3$. Let $\widehat{e}_i$, for $i=1,2,3$, be the
dual basis. First of all we find the
coordinates of $v_4$, $v_5$ and $v_6$ in the system we have taken and we obtain:
$$v_4=(-1,-1,2),\;v_5=(-1,-1,1),\;v_6=(0,0,-1)$$
and thus we get the three matrices:
$$A_1=\left(
\begin{array}{ccc}
1 & 0 & 0\\
0 & 1 & 0\\
0 & 0 & 1
      \end{array}\right)
,\;A_2=\left(\begin{array}{ccc}
1 & 0 & 0\\
0 & 1 & 0\\
0 & 0 & -1
      \end{array}\right),A_3=\left(\begin{array}{ccc}
1 & 0 & 0\\
-1 & -1 & 2\\
-1 & -1 & 1
      \end{array}\right).$$
Inverting these matrices we obtain:
$$B_1=\left(
\begin{array}{ccc}
1 & 0 & 0\\
0 & 1 & 0\\
0 & 0 & 1
      \end{array}\right)
,\;B_2=\left(\begin{array}{ccc}
1 & 0 & 0\\
0 & 1 & 0\\
0 & 0 & -1
      \end{array}\right),B_3=\left(\begin{array}{ccc}
1 & 0 & 0\\
-1 & 1 & -2\\
0 & 1 & -1
      \end{array}\right).$$
Let us now take a vector $v=(a_1,a_2,a_3)^t$ in $P_p$ and let\linebreak $d_i=C_{1i}v=(B_i)^{-1}B_1v=A_iv$ for
$i=1,2,3$. Then we have 
$$d_2=(a_1,a_2,-a_3),\;\text{and}\;d_3=(a_1,-a_1-a_2+2a_3,-a_1-a_2+a_3).$$
Recall that $(\pi_p)_{*}(\ox)^\vee=\bigoplus_{v\in P_p}\ox(D_v)$, with
\begin{equation}\label{Dv}
 D_v=-l_{\sigma_3}(v_4)Z_4-\ellesigma{3}{5}Z_5-\ellesigma{2}{6}Z_6.
\end{equation}
Let us see how $D_v$ varies when we took different $v$'s in $P_p$.
\\

\indent \underline{Case 1:} $v=0$. In this case $D_v=0$.\\

\indent \underline{Case 2:} $a_1\neq0$, $a_2=a_3=0$.\\
Computing $d_2$ and $d_3$ we find that $d_2=(a_1,0,0)$, while $d_3=(a_1,-a_1,-a_1)$. Thus
$$d_2=p\cdot (0,0,0)+(a_1,0,0)\;\text{and}\; d_3=p\cdot(0,-1,-1)+(a_1,p-a_1,p-a_1)$$
 and consequently, $h_2=0$ and $h_3=(0,-1,-1)$. It follows that $\elsig{2}$ is the zero operator and
$\elsig{3}=\widehat{e}_2$. Using the formulas
\eqref{Dv} we gain:
$$ D_v=-\ellesigma{3}{4}Z_4-\ellesigma{3}{5}Z_5=Z_4+Z_5.$$
\\

\indent \underline{Case 3:} $a_2\neq0$, $a_1=a_3=0$.\\
In this case we have $d_2=(0,a_2,0)$ and $d_3=(0,-a_2,-a_2)$. As in the previous situation we get
$$D_v=Z_4+Z_5.$$
\\

\indent \underline{Case 4:} $a_3\neq0$, $a_2=a_1=0$.\\
Under the aforementioned hypothesis, $d_2=(0,0,-a_3)$, thus $h_2=(0,0,-1)$. The triple $d_3$ will instead be
$(0,2a_3,a_3)$. So we have now two
possibilities for $h_3$: $h_3=(0,s,0)$ with $s\in\{0,1\}$ assuming the following values:
$$s=\begin{cases}
     0&\text{if $2a_3< p$,}\\
1&\text{if $2a_3\geq p$.}
    \end{cases}
$$
The next step is to compute the operators $\elsig{2}$ and $\elsig{3}$. We have $\elsig{2}=\widehat{e}_3$ and
$\elsig{3}=s\widehat{e}_2+s\widehat{e}_3$.
Concluding we got
$$
D_v=-\ellesigma{3}{4}Z_4-\ellesigma{3}{5}Z_5-\ellesigma{2}{6}Z_6=-sZ_4+Z_6=\begin{cases}
                                                                  Z_6,\\
Z_6-Z_4.
                                                                 \end{cases}
$$

\indent \underline{Case 5:} $a_1,\;a_2\neq0$, $a_3=0$.\\
In this case $d_2=(a_1,a_2,0)$, that means that $h_2=0$ which implies that $\elsig{2}$ is the zero operator.
On the other side
$d_3=(a_1,-a_1-a_2,-a_1-a_2)$ and, as in the previous case, we have two different possibilities for $h_3$:
$$h_3=\begin{cases}
       (0,-1,-1)& \text{If $-a_1-a_2\geq -(p-1)$},\\
       (0,-2,-2)&\text{If $-a_1-a_2< -(p-1)$}.
      \end{cases}$$
Thus we can write $h_3=(0,-s,-s)$ where $s=1,2$. With this notation we can easily compute the operator
$\elsig{3}=s\ei{2}$ and get
$$D_v=\begin{cases}
       Z_4+Z_5,\\
2Z_4+2Z_5.
      \end{cases}$$
\\

\indent \underline{Case 6:} $a_1,\;a_3\neq0$, $a_2=0$.\\
Under these assumptions, $d_2=(0,a_2,-a_3)$, $h_2=(0,0,-1)$ and, as we already calculated in the solution to
the fourth case, $\elsig{2}=\ei{3}$.
Since\linebreak $d_3=(a_1,-a_1+2a_3,-a_1+a_3)$ we have several possibilities for $h_3$, depending on the sign
of $-a_1+2a_3$ and,$-a_1+a_3$:\\
\textit{Case 6.1:} $-a_1+a_3\geq 0$.\\
If this is the case, then $h_3=0$ and $\elsig{3}$ is the zero operator. Thus we get
$$D_v=Z_6.$$
\textit{Case 6.2:} $-a_1+a_3<0$, $-a_1+2a_3\geq 0$.\\
Under these hypothesis $h_3=(0,0,-1)$ and $\elsig{3}=2\ei{2}+\ei{3}$. Consequently we get
$$D_v=Z_5+Z_6.$$
\textit{Case 6.3:} $-a_1+2a_3<0$.\\
In this case $h_3=(0,-1,-1)$, and $\elsig{3}=\ei{2}$. Making all the computation we find out that
$$D_v=Z_4+Z_5+Z_6.$$

\indent \underline{Case 7:} $a_3,\;a_2\neq0$, $a_1=0$.\\ 
After having switched $a_1$ with $a_2$, this case is completely identical to the previous one, and we get the
same divisors $D_v$'s.\\

\indent \underline{Case 8:} $a_1,\;a_2,\;a_3\neq0$.\\
As before, in this case we have $h_2=(0,0,-1)$ and $\ellesigma{2}{6}=-1$. Let us compute $h_3$ and
$\elsig{3}$.\\
\textit{Case 8.1:} $-a_1-a_2+a_3\geq0$.\\
In this case $h_3=0$ and $D_v=Z_6$.\\
\textit{Case 8.2:} $-(p-1)\leq-a_1-a_2+a_3<0$, $-a_1-a_2+2a_3\geq 0$.\\
In this case $h_3=(0,0,-1)$ and 
$$D_v=Z_5+Z_6.$$
\textit{Case 8.3:} $-(p-1)\leq-a_1-a_2+a_3\leq-a_1-a_2+2a_3<0$.\\
In this case $h_3=(0,-1,-1)$ and $\elsig{3}=\ei{2}$. It follows that
$$D_v=Z_4+Z_5+Z_6.$$
\textit{Case 8.4:} $-(p-1)>-a_1-a_2+a_3$, $0>-a_1-a_2+2a_3\geq -(p-1)$.\\
In this case $h_3=(0,-1,-2)$. Consequently $\elsig{3}=3\ei{2}+\ei{3}$. Thus we have
$$D_v=Z_4+2Z_5+Z_6.$$
\textit{Case 8.5:}  $-a_1-a_2+2a_3< -(p-1)$.\\
Under this assumption $h_3=(0,-2,-2)$ and $\elsig{3}=2\ei{2}$. As a consequence we can compute
$$D_v=2Z_4+2Z_5+Z_6.$$

Putting together what we calculated in each case we obtain the statement.

\end{proof}

You may observe that the output of Thomsen's algorithm consists of nine different line bundles. Since
$\mathrm{rk}( K_0(\duno))=8$, there is no hope
for
this sequence to be full strongly exceptional. Thus we have first to find an eight items long full sequence
and, afterward, prove that it is strongly
exceptional. We accomplish the first of these two goals by proving the following proposition.
\begin{prop}\label{full d1}
The bounded derived category $D^b(\duno)$ is generated by the following line bundles:
$$\fas{O},\;\fas{O}(Z_4+Z_5),\;\fas{O}(2Z_4+2Z_5),\;\fas{O}(Z_6),\;\fas{O}(Z_5+Z_6),\;\fas{O}(Z_4+Z_5+Z_6),
\;$$
$$
\fas{O}(Z_4+2Z_5+Z_6),\;\fas{O}(2Z_4+2Z_5+Z_6).
$$
\end{prop}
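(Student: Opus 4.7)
The plan is to leverage Bondal's theorem (invoked in the previous proposition) which guarantees that the nine line bundles produced by Thomsen's algorithm already generate $D^b(\duno)$. Those nine consist of the eight listed in the present statement together with the extra bundle $\fas{O}(Z_6 - Z_4)$, so it is enough to show that $\fas{O}(Z_6 - Z_4)$ belongs to the triangulated subcategory $\mathcal{T}$ generated by the eight. I will do this by exhibiting a short chain of twisted Koszul-type exact sequences. For brevity, write $L_0 = \fas{O}$, $L_1 = \fas{O}(Z_4 + Z_5)$, $L_2 = \fas{O}(2Z_4 + 2Z_5)$, $L_3 = \fas{O}(Z_6)$, $L_4 = \fas{O}(Z_5 + Z_6)$, $L_5 = \fas{O}(Z_4 + Z_5 + Z_6)$, $L_6 = \fas{O}(Z_4 + 2Z_5 + Z_6)$ and $L_7 = \fas{O}(2Z_4 + 2Z_5 + Z_6)$.

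First I would fix a convenient basis of $\Pic(\duno) \cong \Z^3$. The linear equivalences coming from $\sum_i \langle u, v_i\rangle Z_i = 0$ give $Z_1 \sim Z_2 \sim Z_4 + Z_5$ and $Z_3 \sim Z_6 - 2Z_4 - Z_5$, so $\{Z_4, Z_5, Z_6\}$ is a basis and every bundle under consideration becomes an integer triple. The main tool is the Koszul complex associated to a primitive collection: whenever $\{v_{i_1},\dots,v_{i_k}\}$ is primitive, the sections of the divisors $D_{i_1},\dots,D_{i_k}$ have empty common zero locus, so they fit into an exact sequence
\[
0 \to \fas{O}\bigl(-\textstyle\sum_j Z_{i_j}\bigr) \to \cdots \to \bigoplus_j \fas{O}(-Z_{i_j}) \to \fas{O} \to 0,
\]
and every twist of this sequence is again exact. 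In each such sequence, if every term but one lies in $\mathcal{T}$, so does the remaining one.

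The intended chain uses only the two primitive collections $\{v_4, v_6\}$ and $\{v_1, v_2, v_4\}$. Twisting the $\{v_4, v_6\}$-Koszul by $L_5$ yields $0 \to \fas{O}(Z_5) \to L_4 \oplus L_1 \to L_5 \to 0$, and twisting it by $L_7$ yields $0 \to \fas{O}(Z_4 + 2Z_5) \to L_6 \oplus L_2 \to L_7 \to 0$; these place the two auxiliary bundles $\fas{O}(Z_5)$ and $\fas{O}(Z_4 + 2Z_5)$ in $\mathcal{T}$. Next, the $\{v_1, v_2, v_4\}$-Koszul twisted by $L_2$ is a five-term exact sequence whose successive terms are, up to multiplicities, $\fas{O}(-Z_4)$, $L_0 \oplus \fas{O}(Z_5)$, $L_1 \oplus \fas{O}(Z_4 + 2Z_5)$ and $L_2$; since the last three now lie in $\mathcal{T}$, so does $\fas{O}(-Z_4)$. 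Finally, twisting the $\{v_4, v_6\}$-Koszul by $L_3$ gives $0 \to \fas{O}(-Z_4) \to \fas{O}(Z_6 - Z_4) \oplus L_0 \to L_3 \to 0$, which puts $\fas{O}(Z_6 - Z_4)$ in $\mathcal{T}$ and closes the argument.

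The hard part is combinatorial rather than conceptual: one must locate a short chain of twisted primitive Koszul complexes which, starting from the eight listed bundles, reaches $\fas{O}(Z_6 - Z_4)$ while each auxiliary bundle introduced along the way can itself be reached. What makes the plan above succeed is the happy coincidence that the two auxiliary bundles $\fas{O}(Z_5)$ and $\fas{O}(Z_4 + 2Z_5)$ each fit into a single Koszul twist of $\{v_4, v_6\}$ whose other three terms lie in the original list, so that only one further sequence (the $\{v_1, v_2, v_4\}$-twist) is needed to reach $\fas{O}(-Z_4)$; I expect most of the effort in a polished write-up to consist of the Picard-lattice bookkeeping that confirms each of the four twists produces exactly the claimed direct sums.
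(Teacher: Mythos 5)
Your argument is correct: the reduction to showing $\fas{O}(Z_6-Z_4)$ lies in the subcategory $\mathcal{T}$ generated by the eight bundles (via Bondal's generation result for the Frobenius summands) is exactly the paper's starting point, and each of your four twisted Koszul sequences checks out against the relations $Z_1\sim Z_2\sim Z_4+Z_5$, $Z_3\sim -2Z_4-Z_5+Z_6$. The difference is in the bookkeeping: the paper gets away with a \emph{single} exact sequence, namely the Koszul complex of the primitive collection $\{v_1,v_2,v_4\}$ dualized and twisted by $\fas{O}(Z_6-Z_4)$ (equivalently, the untwisted Koszul complex twisted by $\fas{O}(2Z_4+2Z_5+Z_6)$ rather than by your $L_2$), in which every term other than $\fas{O}(Z_6-Z_4)$ is already a direct sum of the eight generators; your choice of twist forces the auxiliary bundles $\fas{O}(Z_5)$, $\fas{O}(Z_4+2Z_5)$ and $\fas{O}(-Z_4)$ and hence three extra sequences, all of which are nevertheless valid. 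Two small points to tighten in a write-up: your stated principle about exact sequences yields membership of the \emph{whole} middle term $\fas{O}(Z_6-Z_4)\oplus L_0$ in the last step, so you should add the one-line observation that the summand can then be split off because the complementary summand $L_0=\fas{O}$ is itself a generator (the split triangle $L_0\to \fas{O}(Z_6-Z_4)\oplus L_0\to\fas{O}(Z_6-Z_4)\to L_0[1]$ is distinguished, so no thickness assumption is needed); and the $\{v_1,v_2,v_4\}$-Koszul has four nonzero terms, not five. With those cosmetic fixes your route proves the proposition, just less economically than the paper's single-sequence argument.
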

\begin{proof}
 It is enough to prove that $\mathcal{O}(Z_6-Z_4)$ is in the triangulated category generated by all the other
line bundles. We do that by constructing
an exact sequence in which $\mathcal{O}(Z_6-Z_4)$ appears once, and all other objects in the sequence are
direct sums of aforementioned line bundles.
Let us consider the primitive collection $\{v_1,v_2,v_4\}$ and construct the Koszul complex associated to
$\mathcal{O}(-Z_1)\oplus\mathcal{O}(-Z_2)\oplus\mathcal{O}(-Z_4)$:
\begin{multline}\label{koszul}
\quad
\quad\quad0\leftarrow\mathcal{O}_Y\leftarrow\mathcal{O}\leftarrow\mathcal{O}(-Z_1)\oplus\mathcal{O}
(-Z_2)\oplus\mathcal{O}(-Z_4)\leftarrow\\
\mathcal{O}(-Z_1-Z_2)\oplus\mathcal{O}(-Z_2-Z_4)\oplus\mathcal{O}(-Z_1-Z_4)\leftarrow\mathcal{O}
(-Z_1-Z_2-Z_4)\leftarrow 0
\end{multline}
where $Y$ denotes the intersection $Z_1\cap Z_2\cap Z_4$. Since we started out with a primitive collection,
then $Y=\emptyset$ and \eqref{koszul} is
exact.\\
Now we write all the divisors in the basis of $\mathrm{Pic}(X)$ given by $Z_4$, $Z_5$ and $Z_6$. Afterward we
dualize the Koszul complex and twist it
by $\mathcal{O}(Z_6-Z_4)$. We obtain the following exact sequence:
\begin{multline}\label{koszul2}
\quad\quad\quad0\rightarrow\mathcal{O}(Z_6-Z_4)\rightarrow\mathcal{O}(Z_6+Z_5)^2\oplus\mathcal{O}
(Z_6)\rightarrow\cdots\\
\cdots\rightarrow\mathcal{O}(Z_4+Z_5+Z_6)^2\oplus\mathcal{O}(Z_4+2Z_5+Z_6)\rightarrow\mathcal{O}
(2Z_4+2Z_5+Z_6)\rightarrow 0.\end{multline}
Observe that all the line bundles in \eqref{koszul2} but the first one are among those we picked as
generators, hence the statement is proved.
\end{proof}

\subsection{$\ddue=Bl_{\bbP^1}(\bbP^2\times \bbP^1)$}
In this paragraph we are concerned with finding a set of generators for $D^b(\ddue)$ of cardinality 8, that is
of cardinality the rank of the
Grothendieck group $K_0(\ddue)$.
\begin{prop}\label{full d2}
 Let $p$ be a sufficiently large prime integer and indicate with\linebreak $\pi_p:\ddue\longrightarrow\ddue$
the Frobenius morphism relative to $p$. Then the
different
summands of $(\pi_p)_*(\fas{O}_{\mathcal{D}_2})^\vee$ are the following:
$$\mathcal{O},\;\mathcal{O}(Z_4+Z_5),\;\mathcal{O}(2Z_4+2Z_5),\;\mathcal{O}(Z_6),\;\mathcal{O}(Z_5+Z_6),
\;\mathcal{O}(Z_4+Z_5+Z_6),$$
$$
\;\mathcal{O}(Z_4+2Z_5+Z_6),\;\mathcal{O}(2Z_4+2Z_5+Z_6).
$$
\end{prop}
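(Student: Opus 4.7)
The plan is to imitate, step by step, the computation carried out in the previous subsection for $\duno$. First I would extract from \cite[Proposition 2.5.6]{batyrev} the primitive collections and primitive relations of $\ddue$. Since $\ddue$ is the blow-up of $\bbP^2\times\bbP^1$ along a torus-invariant $\bbP^1$, its fan has six rays and eight maximal cones, in agreement with $\rho=3$ and $\mathrm{rk}(K_0(\ddue))=8$; the relations will differ from those of $\duno$ in exactly one primitive relation, reflecting the different projective bundle structure.

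Next I would pick three maximal cones $\sigma_1,\sigma_2,\sigma_3$ whose generators give a convenient change of basis for $\Z^3$ (taking, if possible, $e_i=v_i$ for $i=1,2,3$), and express $v_4,v_5,v_6$ in these coordinates. This produces three matrices $A_i\in\gl_3(\Z)$, their inverses $B_i$, and the change-of-chart matrices $C_{1i}=A_i$. For a generic $v=(a_1,a_2,a_3)^t\in P_p$ I would then compute $d_i=A_i v$, feeding these into the division-with-remainder identity $C_{1i}v+u_{1i}=p\cdot h^{u_{1i}}_{1ip}(v)+r^{u_{1i}}_{1ip}(v)$ to extract the vectors $h_i$.

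The bulk of the work is then a case analysis running over the eight subsets of $\{a_1,a_2,a_3\}$ of coordinates that vanish, with sub-cases within each case determined by the signs of the linear combinations appearing as entries of $d_3$ (typically $-a_1-a_2+a_3$, $-a_1-a_2+2a_3$, and the analogous combinations already appearing in the $\duno$ analysis). In each sub-case I compute $h_2$, $h_3$, assemble the operators $\elsig{2}$ and $\elsig{3}$ from the columns of $B_2$ and $B_3$, and finally read off
\[
D_v=-\ellesigma{3}{4}Z_4-\ellesigma{3}{5}Z_5-\ellesigma{2}{6}Z_6,
\]
using the observation that the values of $\elsig{k}(v_j)$ are independent of the choice of maximal cone $\sigma_k$ containing $v_j$.

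The main obstacle will be the bookkeeping: keeping track of every sign subdivision without double-counting or missing a line bundle. A pleasant feature, however, is that the expected output already has cardinality $8=\mathrm{rk}(K_0(\ddue))$, so no extra summand of the type $\fas{O}(Z_6-Z_4)$ encountered for $\duno$ should appear. Consequently, unlike the $\duno$ case, no supplementary Koszul-type argument (as in Proposition \ref{full d1}) will be needed to cut the list down to the rank of the Grothendieck group: Bondal's theorem applied to the eight summands of $(\pi_p)_*(\fas{O}_{\ddue})^{\dual}$ will immediately give generation of $D^b(\ddue)$.
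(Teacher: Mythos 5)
Your plan follows the paper's own proof essentially verbatim: the paper runs Thomsen's algorithm with the same basis $(v_1,v_2,v_3)$ and cones $\sigma_1=\{v_1,v_2,v_3\}$, $\sigma_2=\{v_1,v_2,v_6\}$, $\sigma_3=\{v_1,v_5,v_4\}$, recycles $\elsig{1}$ and $\elsig{2}$ from the $\duno$ computation, and only recomputes $A_3$, $B_3$ and the sign case analysis for $\elsig{3}$, exactly as you propose. The one small inaccuracy is that two primitive relations (not one) change with respect to $\duno$, namely $v_1+v_2+v_4=v_3$ and $v_1+v_2+v_5=0$, but this does not affect the argument since only the matrix $A_3$ is altered.
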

\begin{proof}
 From the classification of toric Fano 3-folds (\cite[Proposition 2.5.6]{batyrev}) we know that the  primitive
collections of $\mathcal{D}_2$ are
the same of $\duno$, while primitive relations are:
\begin{itemize}
\item $v_3+v_6=0$;
\item $v_4+v_6=v_5$;
\item $v_3 +v_5=v_4$;
\item$v_1+v_2+v_4=v_3$;
\item $v_1+v_2+v_5=0$.
\end{itemize}
As we already did, choose $(v_1,v_2,v_3)$ as a basis of $\mathbb{Z}^3$, and take the following three maximal
cones that covers all the
vertices of the polytope defining the toric variety:
$$\sigma_1=\{v_1,v_2,v_3\},\quad\sigma_2=\{v_1,v_2,v_6\},$$
$$\sigma_3=\{v_1,v_5,v_4\}.$$
One can easily see that the matrices $A_i$ but the last one unchanged respect to the ones we processed in the
case of $\duno$, while 
$$ A_3=\left(\begin{array}{ccc}
1 & 0 & 0\\
-1 & -1 & 0\\
-1 & -1 & 1
      \end{array}\right).$$
We need to compute its inverse:
$$B_3=\left(\begin{array}{ccc}
1 & 0 & 0\\
-1 & -1 & 0\\
0 & -1 & 1
      \end{array}\right).$$
Now we take $v=(a_1,a_2,a_3)^t\in P_p$ and process it obtaining\linebreak $d_3=A_3v=(a_1,a_1-a_2,a_2-a_3)^t$.
Let us see how $D_v$ changes if we take
different $v$'s.
It
is useful to observe that, since $A_1$ and $A_2$ are unchanged, we do not need to be concerned with the
computation of $\elsig{1}$ and $\elsig{2}$
because we can obtain them from the previous example.
Almost the same computations we did for the $\duno$ case show that
\begin{enumerate}
 \item If all the $a_i$'s are null then $D_v=0$.
\item If $a_1\neq0$, while $a_2=a_3=0$, then $D_v=Z_4+Z_5$.
\item If $a_2\neq0$, while $a_1=a_3=0$, then $D_v=Z_4+Z_5$.
\item If $a_3\neq0$, while $a_2=a_1=0$, then $D_v= Z_6$.
\item If $a_1, a_2\neq0$, while $a_3=0$, the coordinate of $\elsig{3}$ in the dual basis are $(0,s,0)$ with
$s=0,1$. Thus we have two possibilities 
for $D_v$:
$$D_v=\begin{cases}
       Z_4+Z_5,\\
2Z_4+2Z_5.
      \end{cases}$$
\item If $a_1, a_3\neq0$, while $a_2=0$, then again we have two possibility for $D_v$, since
$\elsig{3}=(0,1,1-s)$, with $s=0,1$ in the
dual basis. We obtain
$$D_v=\begin{cases}
       Z_5+Z_6,\\
Z_4+Z_5+Z_6.
      \end{cases}$$
\item If $a_3, a_2\neq0$, while $a_1=0$, then $\elsig{3}$ is again $(0,1,1-s)$ with $s=0,1$. Thus we get the
same divisor we obtained in the previous case.
\item Finally, if all the $a_i$'s are not zero, we have to split our computation in four sub-cases, depending
on the values $-a_1-a_2$ and
$-a_1-a_2+a_3$ will assume. We end up with four possibilities for $D_v$:
$$D_v=\begin{cases}
       2Z_4+2Z_5+Z_6,\\
Z_4+2Z_5+Z_6,\\
Z_4+Z_5+Z_6,\\
Z_5+Z_6.
      \end{cases}$$
\end{enumerate}
The union of all these intermediate results implies the statement.
\end{proof}


\subsection{$\mathcal{E}_1$, $S_2-$bundle over $\bbP^1$ and $\mathcal{E}_2=S_2-$bundle over $\bbP^1$}
\begin{prop}
 Let $p$ be a sufficiently large prime and indicate by\linebreak $\pi_p:\euno\longrightarrow\euno$ the
Frobenius morphism relative to $p$. Then the distinct summands of
$(\pi_p)_*(\fas{O}_{\mathcal{E}_1})^\vee$ are the following:
$$\mathcal{O},\;\mathcal{O}(Z_7),\;\mathcal{O}(Z_4),\;\mathcal{O}(Z_4+Z_7),\;\mathcal{O}(Z_4+Z_5),\;\mathcal{O
}(Z_4+Z_5+Z_7),$$
$$
\mathcal{O}(Z_1+Z_5+2Z_7),\;\mathcal{O}(Z_1+Z_4+Z_5+Z_7),\;\mathcal{O}(Z_1+Z_4+Z_5+2Z_7).
$$
\end{prop}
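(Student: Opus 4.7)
The plan is to apply the Bondal--Thomsen algorithm to $\euno$ exactly as was done for $\duno$ and $\ddue$. From the classification \cite[Proposition 2.5.6]{batyrev} I would first extract the primitive collections and primitive relations of $\euno$. Since $\dim N = 3$ and $\euno$ has seven ray generators $v_1,\ldots,v_7$, three of them can be taken as a $\mathbb{Z}$-basis of $N$, and the remaining four are then expressed in this basis by means of the primitive relations.

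Next I would select a small family of maximal cones $\sigma_1,\ldots,\sigma_t$ of $\Sigma$ whose union contains every ray generator. This coverage condition is what guarantees that for each toric divisor $Z_j$ the coefficient $-\elsig{k}(v_j)$ appearing in $D_v$ can be computed from at least one of our chosen $\sigma_k$, independently of which such cone is used. For each cone I would assemble the matrix $A_i$ whose rows are its generators, invert it to obtain $B_i$, and note that, because $\sigma_1$ is generated by the basis, $A_1 = I$ and $C_{1i} = A_i$ for all $i$. Taking $v = (a_1,a_2,a_3)^t \in P_p$, I would compute $d_i = A_i v$, extract $h_i$ from the Euclidean division $d_i = p\, h_i + r_i$ with $r_i \in P_p$, and read off $\elsig{i} = B_i h_i$.

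The proof then proceeds by the same eight-fold case split used in the proof of Proposition~\ref{full d1}, according to which of $a_1, a_2, a_3$ vanish, together with subcases inside each branch determined by threshold inequalities of the form $\lambda_1 a_1 + \lambda_2 a_2 + \lambda_3 a_3 \gtrless p$ that fix the floor of each entry of $d_i/p$. For each case I would compute $\elsig{k}(v_j)$ at every ray, assemble $D_v = \sum_j -\elsig{k}(v_j)\, Z_j$, and rewrite the result in a chosen $\mathbb{Z}$-basis of $\mathrm{Pic}(\euno)$ using the primitive relations. The claim is that the union of all these outputs is precisely the list of nine line bundles in the statement.

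The main obstacle is the sheer bookkeeping: with seven ray generators and more subcase thresholds than appeared for $\duno$, the ``all $a_i$ nonzero'' branch has to be dissected into several sub-branches, and one must verify both that each one produces a divisor in the stated list and that no further divisor appears. The sub-branches in which some $d_{i,k}$ exceeds $p$ are what account for the summands $\fas{O}(Z_1+Z_5+2Z_7)$ and $\fas{O}(Z_1+Z_4+Z_5+2Z_7)$ carrying a coefficient $2$. Conceptually the argument introduces nothing new beyond the template of the previous two subsections, so it should run to completion via patient case-checking.
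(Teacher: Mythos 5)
Your proposal is essentially the paper's own proof: the paper applies exactly this Bondal--Thomsen computation to $\euno$, choosing the basis $(v_2,v_3,v_6)$ and the three covering maximal cones $\sigma_1=\{v_2,v_3,v_6\}$, $\sigma_2=\{v_4,v_5,v_6\}$, $\sigma_3=\{v_2,v_1,v_7\}$ (so that $A_1=I$, $l_{\sigma_1}=0$ and $D_v=-l_{\sigma_3}(v_1)Z_1-l_{\sigma_2}(v_4)Z_4-l_{\sigma_2}(v_5)Z_5-l_{\sigma_3}(v_7)Z_7$), and then runs precisely your eight-fold case split on the vanishing of $a_1,a_2,a_3$ with threshold subcases, the coefficient-$2$ summands $\fas{O}(Z_1+Z_5+2Z_7)$ and $\fas{O}(Z_1+Z_4+Z_5+2Z_7)$ indeed arising from the branches where an entry of $d_3$ drops below $-(p-1)$. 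The only thing you leave undone is the routine execution of these cases, which is the entire content of the paper's argument; no new idea is needed to complete it.
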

\begin{proof}
 From \cite[Proposition 2.5.9]{batyrev} we know that the primitive collections of $\mathcal{E}_1$ are:
$$
\{v_2,v_4\},\;\{v_3,v_5\},\;\{v_1,v_3\},\;\{v_2,v_5\},\;\{v_1,v_4\},\;\{v_6,v_7\},
$$
and the ray generators of $\mathcal{E}_1$ satisfy the following relations:
\begin{itemize}\item$v_2+v_4=0$,
\item$v_3+v_5=0$,
\item$v_1+v_3=v_2$,
\item$v_2+v_5=v_1$,
\item$v_1+v_4=v_5$,
\item$v_6+v_7=v_1$.
\end{itemize}
As in the previous examples we choose three maximal cones such that they cover all the ray generators of the
toric Fano $3-$fold:
$$\sigma_1=\{v_2,v_3,v_6\},\quad\sigma_2=\{v_4,v_5,v_6\},$$
$$\sigma_3=\{v_2,v_1,v_7\}.$$
Let us take $\mathfrak{B}=(v_2=e_1,v_3=e_2,v_6=e_3)$ a basis of $\mathbb{Z}^3$ and denote with $\widehat{e}_i$
the elements of the dual basis. For
$i=1,2,3$ we construct the matrices $A_i$ and we get: 
$$A_1=\left(
\begin{array}{ccc}
1 & 0 & 0\\
0 & 1 & 0\\
0 & 0 & 1
      \end{array}\right)
,\;A_2=\left(\begin{array}{ccc}
-1 & 0 & 0\\
0 & -1 & 0\\
0 & 0 & 1
      \end{array}\right),A_3=\left(\begin{array}{ccc}
1 & 0 & 0\\
1 & -1 & 0\\
1 & -1 & -1
      \end{array}\right).$$
Inverting these matrices we obtain:
$$B_1=\left(
\begin{array}{ccc}
1 & 0 & 0\\
0 & 1 & 0\\
0 & 0 & 1
      \end{array}\right)
,\;B_2=\left(\begin{array}{ccc}
-1 & 0 & 0\\
0 & -1 & 0\\
0 & 0 & 1
      \end{array}\right),B_3=\left(\begin{array}{ccc}
1 & 0 & 0\\
1 & -1 & 0\\
0 & 1 & -1
      \end{array}\right).$$
Now choose $v=(a_1,a_2,a_3)^t\in P_p$ and let
$$d_1=A_1v=(a_1,a_2,a_3),\quad d_2=A_2v=(-a_1,-a_2,a_3),$$
$$d_3=A_3v=(a_1,a_1-a_2,a_1-a_2-a_3).$$
Defining as we did before $h_i$ and $\elsig{i}$, then we know that
\begin{gather*}
 D_v=-l_{\sigma_3}(v_1)Z_1-l_{\sigma_1}(v_2)Z_2-l_{\sigma_1}(v_3)Z_3-l_{\sigma_2}(v_4)Z_4+\\
-l_{\sigma_2}(v_5)Z_5-l_{\sigma_1}(v_6)Z_6-l_{\sigma_3}(v_7)Z_7.
\end{gather*}
Note that $A_1v\in P_p$ for every $v$. So $h_1=(0,0,0)$ for every $v$ and $l_{\sigma_1}$ is the zero operator.
Thus we can write:
$$
 D_v=-l_{\sigma_3}(v_1)Z_1-l_{\sigma_2}(v_4)Z_4-l_{\sigma_2}(v_5)Z_5-l_{\sigma_3}(v_7)Z_7.$$
To find all the $D_v$'s we have to see how the $h_i$'s change when $v$ varies in $P_p$.\\

\indent \underline{Case 1:} $a_1=a_2=a_3=0$.
As before in this case we have $D_v=0$.\\

\indent \underline{Case 2:} $a_1\neq0$, $a_2=a_3=0$.\\
In this case $d_2=(-a_1,0,0)$ and $d_3=(a_1,a_1,a_1)$. So we obtain that $\elsig{3}$ is the zero operator,
while $\elsig{2}=\ei{1}$. Putting this
together
and computing the coefficients $\ellesigma{i}{j}$  we get that
$$D_v=Z_4.$$

\indent \underline{Case 3:} $a_2\neq0$, $a_1=a_3=0$.\\
In this case we compute $h_2$ to be the vector $(0,-1,0)$, while $h_3=(0,-1,-1)$. As a consequence
$\elsig{2}=\ei{2}=\elsig{3}$. Then
$$D_v=Z_1+Z_5+Z_7.$$

\indent \underline{Case 4:} $a_3\neq0$, $a_1=a_2=0$.\\
In this case $\elsig{2}$ is the  zero operator. Computing $\elsig{3}$ we get instead that\linebreak
$h_3=(0,0,-1)$, $\ellesigma{3}{1}=0$ and
$\ellesigma{3}{7}=-1$. It follows that
$$D_v=Z_7.$$

\indent \underline{Case 5:} $a_1,\;a_2,\neq0$, $a_3=0$.\\
In this case $h_2$ is always equal to $(-1,-1,0)$ while we have two hypothesis for $h_3$. Indeed, being
$d_3=(a_1,a_1-a_2,a_1-a_2)$, we have that
$h_3=(0,-s,-s)$ with 
$$
s=\begin{cases}
   0 &\text{if $a_1-a_2\geq 0$},\\
1&\text{otherwise.}
  \end{cases}
$$
It follows that $\elsig{2}=\ei{1}+\ei{2}$ and $\elsig{3}=s\ei{2}$. Consequently we get
$$
D_v=\begin{cases}
     Z_4+Z_5,\\
Z_1+Z_4+Z_5+Z_7.
    \end{cases}
$$

\indent \underline{Case 6:} $a_1,\;a_3\neq0$, $a_2=0$.\\
In this case we have $h_2=(-1,0,0)$ and, again, we get two possibilities for $h_3$. More precisely
$$h_3=\begin{cases}
       0&\text{if $a_1-a_3\geq0$},\\
(0,0,-1)&\text{otherwise.}
      \end{cases}
$$
As a consequence in this case we obtain two different summands of $(\pi_p)_*(\fas{O}_{\fas{E}_1})^{\vee}$:
$$D_v=\begin{cases}
      Z_4,\\
Z_4+Z_7.
     \end{cases}
$$
\indent \underline{Case 7:} $a_3,\;a_2\neq0$, $a_1=0$.\\
In this case $h_2=(0,-1,0)$. Thus $\ellesigma{2}{4}=0$ and $\ellesigma{2}{5}=-1$. It remains to compute
$\elsig{3}$. Being $d_3=(0,-a_2,-a_2-a_3)$ we
have that $h_3=(0,-1,-s)$ with $s=1$ or $s=2$. It follows that $\elsig{3}=\ei{2}+(s-1)\ei{3}$ and we end up
with the following two divisors:
$$D_v=\begin{cases}
  Z_1+Z_5+Z_7,\\
Z_1+Z_5+2Z_7.     
      \end{cases}
$$

\indent \underline{Case 8:} $a_1,\;a_2,\;a_3\neq0$.\\
In this case $h_2=(-1,-1,0)$ and $l_{\sigma_2}(v_4)=l_{\sigma_2}(v_5)=-1$.
To compute $h_3$ we need to consider some different situations.\\
\textit{Case 8.1:} $a_1-a_2-a_3<-(p-1)$.\\
Under this assumption $a_1-a_2<0$ and hence $h_3=(0,-1,-2)$. It follows that $l_{\sigma_3}(v_1)=-1$, while
$l_{\sigma_3}(v_7)=-2$. Putting this together
with what was found earlier we get
$$D_v=Z_1+Z_4+Z_5+2Z_7.$$
\textit{Case 8.2:} $0>a_1-a_2-a_3\geq-(p-1)$, $a_1-a_2<0$.\\
In this case $h_3=(0,-1,-1)$ and $l_{\sigma_3}(v_1)=l_{\sigma_3}(v_7)=-1$. Then
$$D_v=Z_1+Z_4+Z_5+Z_7.$$
\textit{Case 8.3:} $0>a_1-a_2-a_3\geq-(p-1)$, $a_1-a_2\geq0$.\\
We have that $h_3=(0,0,-1)$, $l_{\sigma_3}(v_1)=0$ and $l_{\sigma_3}(v_7)=-1$. So
$$
D_v=Z_4+Z_5+Z_7.
$$
\end{proof}

Using the same methods and techniques, we were also able to prove the following proposition. Since the steps
of
the proof and all the calculations are pretty much the same as the ones in the previous case, we decided, for
the sake of brevity, to ommit them and to leave them to the reader.
\begin{prop}
 Let $p$ a sufficiently large prime and indicate with\linebreak $\pi_p:\edue\longrightarrow\edue$ the
Frobenius morphism relative to $p$. Then the
different summands of $(\pi_p)_*(\fas{O}_{\mathcal{E}_2})^\vee$ are the following:
$$\mathcal{O},\;\mathcal{O}(Z_7),\;\mathcal{O}(Z_4),\;\mathcal{O}(Z_1+Z_5),\;\mathcal{O}(Z_4+Z_7),\;\mathcal{O
}(Z_4+Z_5),\;\mathcal{O}(Z_1+Z_5+Z_7),
$$
$$
\mathcal{O}(Z_4+Z_5+Z_7),\;\mathcal{O}(Z_1+Z_4+Z_5),\;\mathcal{O}(Z_1+Z_4+Z_5+Z_7).
$$
\end{prop}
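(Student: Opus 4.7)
The plan is to apply the exact same Thomsen/Bondal machinery deployed in the preceding proposition for $\euno$, since $\edue$ is again an $S_2$-bundle over $\bbP^1$ and thus has the same combinatorial skeleton (seven ray generators, Picard number $4$, Grothendieck rank $10$). First I would extract from \cite[Proposition 2.5.9]{batyrev} the primitive collections and the primitive relations of $\edue$; the collections should be close to those of $\euno$, but the relations (in particular the one describing how $v_6+v_7$ or $v_1+v_4$ breaks up) will differ, which is precisely what forces a different list of output divisors.

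Next, I would pick a $\mathbb{Z}$-basis of $N$ from three of the ray generators — most naturally the same choice $\mathfrak{B}=(v_2,v_3,v_6)$ used before — and select three maximal cones $\sigma_1,\sigma_2,\sigma_3$ whose union hits every ray. Because only the relation $v_6+v_7=v_1$ is being altered (to the one dictated by the $S_2$-structure of $\edue$), two of the matrices $A_i$ will be identical to those appearing in the $\euno$ computation, so their contributions to $h_i$ and $\elsig{i}$ may be imported wholesale; only the third matrix $A_3$ (and hence $B_3$) needs to be recomputed and inverted.

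With these matrices in hand, I would fix a generic $v=(a_1,a_2,a_3)^t\in P_p$, form $d_i=A_iv$, and extract $h_i=h^{0}_{1ip}(v)$ together with $\elsig{i}=B_i\cdot h_i$ by performing the Euclidean division $d_i=p\cdot h_i+r_i$ coordinatewise. The divisor $D_v$ is then read off from the formula $D_v=-\sum_j l_{\sigma_{k(j)}}(v_j)Z_j$ for any maximal cone $\sigma_{k(j)}$ containing $v_j$. I would then run through the eight cases corresponding to which subset of $\{a_1,a_2,a_3\}$ vanishes, further subdividing the cases with all $a_i$ non-zero according to the signs of the linear forms appearing in $d_3$ (these are the inequalities that decide the integer parts modulo $p$). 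Collecting the distinct outputs should produce exactly the ten line bundles listed in the statement.

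The main obstacle, as in the $\euno$ case, is bookkeeping: the final sub-cases (all $a_i$ nonzero) branch on several simultaneous inequalities involving $p-1$, and each branch has to be checked not to yield a new divisor outside the claimed list. The expected economy is that most branches reproduce divisors already obtained in the simpler cases, with only the branches producing $\fas{O}(Z_1+Z_4+Z_5)$ and $\fas{O}(Z_1+Z_4+Z_5+Z_7)$ genuinely new; verifying that no further divisor appears (and that every listed divisor is actually hit for $p$ large) is the routine but error-prone heart of the proof, which is why the authors are content to leave it to the reader.
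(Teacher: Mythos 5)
Your plan is exactly the paper's approach: the authors explicitly omit this proof, stating that it is obtained by rerunning the Thomsen/Bondal computation of the $\euno$ case with the same basis $(v_2,v_3,v_6)$ and maximal cones, where only the primitive relation for $\{v_6,v_7\}$ (hence only $A_3$, $B_3$) changes — precisely what you describe, and the $\equat$ proof in the paper confirms this pattern. So the proposal is correct and matches the paper; the only caveat is that, like the paper, it leaves the actual case-by-case bookkeeping unexecuted.
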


\subsection{$\mathcal{E}_4=S_2-$bundle over $\bbP^1$}

\begin{prop}
 Let $p$ be a sufficiently large prime and indicate with\linebreak $\pi_p:\equat\longrightarrow\equat$ the
Frobenius morphism relative to $p$. Then the
different summands of $(\pi_p)_*(\fas{O}_{\mathcal{E}_4})^\vee$ are the following:
$$\mathcal{O},\;\mathcal{O}(Z_7),\;\mathcal{O}(Z_4),\;\mathcal{O}(Z_1+Z_5),\;\mathcal{O}(Z_4+Z_7),\;\mathcal{O
}(Z_4+Z_5),\;\mathcal{O}(Z_1+Z_5+Z_7),
$$
$$
\mathcal{O}(Z_4+Z_5+Z_7),\;\mathcal{O}(Z_1+Z_4+Z_5),\;\mathcal{O}(Z_1+Z_4+Z_5+Z_7).
$$
\end{prop}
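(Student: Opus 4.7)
The plan is to apply Thomsen's algorithm as in the previous subsections, so the overall scheme is: read off the primitive collections and relations of $\equat$ from \cite[Proposition 2.5.9]{batyrev}; select three maximal cones $\sigma_1,\sigma_2,\sigma_3$ whose rays together exhaust all seven ray generators $v_1,\dots,v_7$; fix a $\mathbb{Z}$-basis of $N$ given by the generators of one of the $\sigma_i$, so that the corresponding matrix $A_i$ and its inverse $B_i$ are both the identity; express the remaining $v_j$'s in this basis in order to write the other two matrices $A_2,A_3$ and compute $B_2=A_2^{-1}$, $B_3=A_3^{-1}$.

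Next, for an arbitrary $v=(a_1,a_2,a_3)^t\in P_p$, I would compute $d_i=A_i v$ and, following the definition, extract the pair $(h_i,r_i)$ from the division of each coordinate of $d_i$ by $p$. Since the trivial line bundle has $u_{ij}=0$, this reduces the problem to the purely combinatorial question of locating the components of $d_i$ in the interval $[0,p)$ after reduction. As in the $\euno$ and $\edue$ cases, one cone is chosen so that $l_{\sigma_1}$ is identically the zero operator, so that
\[
D_v=-l_{\sigma_3}(v_1)Z_1-l_{\sigma_2}(v_4)Z_4-l_{\sigma_2}(v_5)Z_5-l_{\sigma_3}(v_7)Z_7
\]
(up to the obvious relabeling of which rays actually belong to $\sigma_2,\sigma_3$); the coefficients $l_{\sigma_k}(v_j)$ are then read off from the appropriate entries of $B_k h_k$.

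The remainder is the case analysis in $a_1,a_2,a_3$. I would stratify $P_p$ into the eight cases according to which of the $a_i$ vanish, exactly as in the proof for $\euno$, and within the generic case $a_1,a_2,a_3\neq 0$ subdivide further according to the signs and sizes (compared with $p-1$) of the affine combinations of the $a_i$'s that appear in $d_3$ and $d_2$. For each case I would write down $h_2$, $h_3$, hence $l_{\sigma_2}, l_{\sigma_3}$, and evaluate them on the rays $v_1,v_4,v_5,v_7$. Collecting all distinct divisors $D_v$ obtained in this way should yield exactly the ten line bundles in the statement.

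The main obstacle is purely bookkeeping: ensuring that the specific primitive relations of $\equat$ (which differ from those of $\euno$, $\edue$ because this is a genuinely distinct $S_2$-bundle over $\bbP^1$) are fed correctly into the matrix $A_3$, and that the sub-cases of Case 8 produce precisely the divisors $\fas{O}(Z_1+Z_4+Z_5)$ and $\fas{O}(Z_1+Z_4+Z_5+Z_7)$ together with $\fas{O}(Z_4+Z_5+Z_7)$ and $\fas{O}(Z_1+Z_5+Z_7)$ that distinguish the $\equat$ list from the $\euno$ list. Once the matrices are set up, the case-by-case verification is mechanical and essentially parallels the proofs already written out for $\euno$ and implicitly for $\edue$, which is why it can be safely left to the reader.
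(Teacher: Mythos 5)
Your plan is exactly the paper's method: same choice of three maximal cones covering all seven rays, same basis $(v_2,v_3,v_6)$ making $A_1=B_1$ the identity so that $l_{\sigma_1}$ vanishes, the same expression $D_v=-l_{\sigma_3}(v_1)Z_1-l_{\sigma_2}(v_4)Z_4-l_{\sigma_2}(v_5)Z_5-l_{\sigma_3}(v_7)Z_7$, and the same stratification of $P_p$ into the eight cases with sub-cases in the generic one. So there is no methodological divergence to report.

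The gap is that, as written, nothing is actually computed: the proposition is a purely computational claim (the precise list of ten summands), and your proposal defers exactly the step where that list is produced. Concretely, you never record how the changed primitive relation of $\equat$ (only $v_6+v_7=v_3$ differs from $\euno$, so only $A_3$ changes, since $\sigma_1$ and $\sigma_2$ and hence $A_1,A_2,B_1,B_2$ are untouched), never write down $A_3$ and $B_3$ or $d_3=A_3v$, and never evaluate a single case; the assertion that the cases "should yield exactly the ten line bundles in the statement" is precisely what has to be checked, and it is not a formality --- e.g.\ it is only after computing $h_3$ from $d_3=(a_1,a_1-a_2,a_2-a_3)$ that one sees Case 3 now gives $Z_1+Z_5$ instead of $Z_1+Z_5+Z_7$, Case 6 collapses to the single divisor $Z_4+Z_7$, and the generic case contributes $Z_4+Z_5$, $Z_1+Z_4+Z_5$, $Z_4+Z_5+Z_7$, $Z_1+Z_4+Z_5+Z_7$; these are the points where the $\equat$ list departs from the $\euno$ one. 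The paper's proof, though terse, does carry out this verification case by case after exhibiting $A_3$ and $B_3$; to make your proposal a proof you would need to do the same (or at least the cases affected by the new $A_3$), rather than leave the decisive bookkeeping to the reader.
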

\begin{proof}
 The primitive collections of $\mathcal{E}_4$ are the same of $\euno$ while the primitive relations are the
following:
\begin{itemize}
\item $v_2+v_4=0$,
\item$v_3+v_5=0$,
\item$v_1+v_3=v_2$,
\item$v_2+v_5=v_1$,
\item$v_1+v_4=v_5$,
\item$v_6+v_7=v_3$.
\end{itemize}
As before we chose $(v_2,v_3,v_6)$ as a basis of $\mathbb{Z}^3$, and we take the following three maximal cones
that covers all the
vertices of the polytope defining the toric variety, namely:
$$\sigma_1=\{v_2,v_3,v_6\},\quad\sigma_2=\{v_4,v_5,v_6\},$$
$$\sigma_3=\{v_2,v_1,v_7\}.$$
Since only one primitive relation has changed, all the matrices $A_i$ but the last one are the same as in the
previous two cases, while 
$$ A_3=\left(\begin{array}{ccc}
1 & 0 & 0\\
1 & -1 & 0\\
1 & 0 & -1
      \end{array}\right).$$
Because of this our only concern is to calculate $\elsig{3}$. In order to do so we just need to calculate the
inverse of $A_3$:
$$B_3=\left(\begin{array}{ccc}
1 & 0 & 0\\
1 & -1 & 0\\
1 & 0 & -1
      \end{array}\right).$$
As usual we take $v=(a_1,a_2,a_3)^t\in P_p$ and we compute\linebreak $d_3=(a_1,a_1-a_2,a_2-a_3)$. Let us see
how $D_v$ changes if we take different
$v$'s.
Almost the same computations we did in the previous cases show that:
\begin{enumerate}
 \item If all the $a_i$'s are null then $D_v=0$.
\item If $a_1\neq0$, while $a_2=a_3=0$, then $D_v=Z_4$.
\item If $a_2\neq0$, while $a_1=a_3=0$, then $D_v=Z_1+Z_5$.
\item If $a_3\neq0$, while $a_2=a_1=0$, then $D_v= Z_7$.
\item If $a_1, a_2\neq0$, while $a_3=0$, the coordinates of $\elsig{3}$ in the dual basis are $(0,s,s)$ with
$s=0,1$. Thus we have two possibilities
for $D_v$:
$$D_v=\begin{cases}
       Z_4+Z_5,\\
Z_1+Z_4+Z_5.
      \end{cases}$$
\item If $a_1, a_3\neq0$, while $a_2=0$, then $D_v=Z_4+Z_7$.
\item If $a_3, a_2\neq0$, while $a_1=0$, then $\elsig{3}=(0,1,1+s)$ with $s=0,1$. It follows that 
$$D_v=\begin{cases}
       Z_1+Z_5,\\
Z_1+Z_5+Z_7.
      \end{cases}$$
\item Finally, if all the $a_i$'s are not zero, we get four possibilities for $D_v$:
$$D_v=\begin{cases}
       Z_4+Z_5,\\
Z_1+Z_4+Z_5,\\
Z_4+Z_5+Z_7,\\
Z_1+Z_4+Z_5+Z_7.
      \end{cases}$$
\end{enumerate}
If we collect all cases we obtain exactly the divisors appearing in the statement above and hence the
proposition is proved.

\end{proof}



\section{Vanishing Theorems}
Given a full sequence of line bundles, $(\fas{L}_1,\ldots,\fas{L}_n)$, in order to check if it is also
strongly exceptional, we have to prove that:
\begin{itemize}
 \item[(i)] $\mathrm{Ext}^i_{\ox}(\fas{L}_\alpha,\fas{L}_\beta)=0$ for all $i>0$ and for all $\alpha$ and
$\beta$.
\item[(ii)] $\mathrm{Hom}(\fas{L}_\alpha,\fas{L}_\beta)=0$ for all $\alpha>\beta$.
\end{itemize}
We briefly recall the definition of acyclic line bundle:
\begin{defin}
 A line bundle $\mathcal{L}$ on a smooth complete toric variety $Y$ is said to be \emph{acyclic} if 
$$H^i(Y,\mathcal{L})=0\quad\text{for every $i\geq1$}.$$
\end{defin}
We can substitute conditions (i) and (ii) by other two equivalent statements:
\begin{itemize}
 \item[(i)'] $\fas{L}_\beta\otimes\inv{\fas{L}_\alpha}$ is acyclic for all $\alpha$ and $\beta$.
\item[(ii)'] $\fas{L}_\beta\otimes\inv{\fas{L}_\alpha}$ has no global section for all $\alpha>\beta$.
\end{itemize}
Verify (ii)' is quite simple: in fact let $Z_1 , \ldots , Z_m$ be the toric divisors on a toric variety $X$.
For every $a=(a_1,\ldots,a_m)\in
\mathbb{Z}^m$ we can consider the subset $I_a=\{i_1,\ldots,i_s\}\subseteq J=\{1,\ldots,m\}$  such that
$\rho\in I_a$ if and only if $a_\rho\geq 0$.
Now to $I_a$, as well as to every subset of $J$, we associate the simplicial subcomplex, $C_{I_a}$, of the fan
$\Sigma(X)$ which consists of the cones
in $\Sigma(X)$ whose rays lie in $I_a$. It is a known fact that
\begin{prop}
With the above notation, given $D=\sum_\rho a_\rho Z_\rho$ a toric divisor on $X$ then
$$H^p(X,\mathcal{O}(D))=\bigoplus_{a'}H_{\mathrm{dim}(X)-p}(C_{I_{a'}}, K)$$
where the sum is taken over all $a'=(a'_1,\ldots,a'_m)$ such that $D$ is lineraly equivalent to $\sum_\rho
a'_\rho Z_\rho$.
\end{prop}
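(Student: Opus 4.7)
The plan is to reduce the computation to characters of the torus, then to Čech cohomology on the cover by torus-invariant affines, and finally to a combinatorial model involving the subcomplex $C_{I_{a'}}$. The reparametrization by $a'$ linearly equivalent to $a=(a_1,\dots,a_m)$ is simply a translation of the standard $M$-grading, so the first task is to make that identification explicit.

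First, I would use the action of the torus $T$ on $\mathcal{O}(D)$ to obtain a natural $M$-grading
$$H^p(X,\mathcal{O}(D))=\bigoplus_{u\in M}H^p(X,\mathcal{O}(D))_u.$$
The exact sequence $0\to M\to\mathbb{Z}^m\to\mathrm{Pic}(X)\to 0$ that already appeared in the paper identifies the set of $a'\in\mathbb{Z}^m$ linearly equivalent to $a$ with $a+M$: given $u\in M$, the divisor $D+\mathrm{div}(\chi^u)$ has coefficients $a'_\rho=a_\rho+\langle u,v_\rho\rangle$. So it is enough to show that for each $u\in M$ the graded piece $H^p(X,\mathcal{O}(D))_u$ is isomorphic to $H_{\dim(X)-p}(C_{I_{a'}},K)$, where $I_{a'}=\{\rho:a_\rho+\langle u,v_\rho\rangle\ge 0\}$.

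Next, I would compute $H^p(X,\mathcal{O}(D))_u$ via Čech cohomology with respect to the affine open cover $\{U_\sigma\}_{\sigma\in\Sigma(n)}$. A character $\chi^u$ lies in $\Gamma(U_\sigma,\mathcal{O}(D))$ precisely when $\langle u,v_\rho\rangle\ge -a_\rho$ for every ray $\rho$ of $\sigma$, i.e.\ precisely when every ray of $\sigma$ lies in $I_{a'}$, that is when $\sigma\in C_{I_{a'}}$. Consequently the $u$-graded piece of the Čech complex is, as a vector space, the simplicial cochain complex of $C_{I_{a'}}$ (equivalently, the simplicial chain complex of the star of the missing rays in $\Sigma$). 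The differentials match the combinatorial differentials coming from adding/removing a ray.

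Finally, to convert the resulting simplicial cohomology into the homology stated in the proposition I would invoke the Alexander-type duality between $C_{I_{a'}}$ and its complement inside the boundary of $\Sigma$: because $|\Sigma|$ is, for a complete fan, homeomorphic to a sphere $S^{n-1}$ (so one can think of $C_{I_{a'}}$ as a subcomplex of $S^{n-1}$), the relevant cohomology of $C_{I_{a'}}$ gets identified with $H_{n-p}(C_{I_{a'}},K)$, where $n=\dim(X)$. This duality/reindexing step is the part that requires the most care: one has to line up the Čech degree, the simplicial chain degree, the shift coming from reduced cohomology of the full sphere, and the choice between $C_{I_{a'}}$ and its complement. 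Once this bookkeeping is done correctly the formula
$$H^p(X,\mathcal{O}(D))=\bigoplus_{a'}H_{\dim(X)-p}(C_{I_{a'}},K)$$
follows by summing the graded pieces over $u\in M$, equivalently over $a'$ linearly equivalent to $a$.
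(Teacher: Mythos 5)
The paper itself does not prove this statement; it simply quotes it from Borisov--Hua (\cite[Proposition 4.1]{borisov}), so the only thing to assess is whether your sketch would work, and there is a genuine gap in its middle step. The $u$-graded piece of the \v{C}ech complex for the cover $\{U_\sigma\}_{\sigma\in\Sigma(n)}$ is \emph{not} the simplicial cochain complex of $C_{I_{a'}}$: its $p$-th term is a direct sum of copies of $K$ indexed by $(p+1)$-tuples of \emph{maximal} cones whose common intersection lies in $C_{I_{a'}}$, and because small cones have few rays this condition holds very often --- for instance the term $\Gamma(T,\mathcal{O}(D))_u=K$ coming from the deepest intersection is nonzero for \emph{every} $u$. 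Already for $X=\mathbb{P}^1$ and $a'_1,a'_2<0$ the graded \v{C}ech complex is $0\to 0\to K\to 0$, with all cohomology in degree $1$, while $C_{I_{a'}}$ consists of the zero cone alone and its (co)chain complex is concentrated in degree $0$; so the two complexes are genuinely different, and the discrepancy is not a matter of bookkeeping.

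The error also breaks the duality step as you set it up: Alexander duality on $|\Sigma|\cap S^{n-1}\cong S^{n-1}$ exchanges a full subcomplex with the full subcomplex on the \emph{complementary} set of rays, so if the graded cohomology really were the cohomology of $C_{I_{a'}}$, duality would output homology of $C_J$ with $J=\{\rho:\ a'_\rho<0\}$, not of $C_{I_{a'}}$ as the proposition requires. The correct intermediate statement is the opposite one: for each $u$ one has $H^p(X,\mathcal{O}(D))_u\cong \widetilde{H}^{\,p-1}(C_J,K)$, the reduced cohomology of the full subcomplex spanned by the rays where the inequality $\langle u,v_\rho\rangle\geq -a_\rho$ \emph{fails} (this is the computation in Fulton, \S 3.5, via local cohomology, or Cox--Little--Schenck, Theorem 9.1.3; it does require an argument precisely because, as noted, the graded \v{C}ech complex is not literally a simplicial cochain complex). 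Only then does combinatorial Alexander duality give $\widetilde{H}^{\,p-1}(C_J,K)\cong\widetilde{H}_{\,n-p-1}(C_{I_{a'}},K)$, which is the group written $H_{\dim(X)-p}(C_{I_{a'}},K)$ here: note the paper's convention (visible in its example for $\mathcal{E}_1$) indexes the augmented chain complex of $C_I$ by cone dimension, whence the shift. With the roles of $C_{I_{a'}}$ and $C_J$ corrected in this way, your outline becomes the standard proof of the cited result.
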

\begin{proof}
\cite[Proposition 4.1]{borisov}
\end{proof}
A consequence of this result was given in \cite[Corollary 2.8]{toricfiber}:
\begin{cor}
With the above notation, $H^0(X, \mathcal{O}(D))$ is determined only by $a' = (a'_1,\ldots,a'_m)$ such that
$D$ is linearly equivalent to $\sum_\rho
a'_\rho Z_\rho$, $a'_\rho\geq 0$. We call the toric divisors such as those \emph{toric effetive} divisors.
\end{cor}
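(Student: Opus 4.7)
The plan is to specialize the previous proposition to $p = 0$ and isolate which representations $a'$ of $D$ actually give nonzero summands. Setting $p = 0$ in the proposition yields
$$H^0(X, \mathcal{O}(D)) = \bigoplus_{a'} H_{\dim(X)}(C_{I_{a'}}, K),$$
where the sum runs over all $a' = (a'_1, \ldots, a'_m) \in \mathbb{Z}^m$ with $\sum_\rho a'_\rho Z_\rho$ linearly equivalent to $D$. The task thus reduces to determining which subcomplexes $C_{I_{a'}} \subseteq \Sigma(X)$ carry nontrivial top-dimensional homology.

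First I would treat the toric effective case, $a'_\rho \geq 0$ for every $\rho$. By the definition of $I_{a'}$, this forces $I_{a'} = J = \{1, \ldots, m\}$, so $C_{I_{a'}}$ is the entire fan $\Sigma(X)$. Since $X$ is complete, $\Sigma(X)$ is a complete $n$-dimensional fan, and the top-dimensional homology $H_n(\Sigma(X), K)$ is nontrivial; such summands genuinely contribute to $H^0$.

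Next I would handle the converse: if at least one $a'_\rho$ is strictly negative, then $\rho \notin I_{a'}$, so every maximal cone of $\Sigma(X)$ containing the ray $v_\rho$ is excised from $C_{I_{a'}}$. The resulting subcomplex is then a proper closed subset of $\mathbb{R}^n$ and cannot support a top-dimensional cycle, so $H_n(C_{I_{a'}}, K) = 0$. Combining the two cases, only the toric effective representations of $D$ contribute to the decomposition, which is exactly the statement of the corollary.

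The main technical subtlety I foresee is making the vanishing argument in the non-effective case fully rigorous: one must pin down the exact homological conventions of the Borisov proposition (reduced versus unreduced homology, and whether $C_{I_{a'}}$ is viewed as an abstract simplicial complex or as its support in $\mathbb{R}^n$). Once those conventions are fixed, the dichotomy \emph{$C_{I_{a'}}$ is the whole complete fan versus it misses at least one full-dimensional cone} reduces the computation to an elementary topological fact about complete fans, and the proof is essentially immediate.
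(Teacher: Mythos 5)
Your argument is correct in substance, but it is worth noting that the paper itself offers no proof at all for this corollary: it simply cites Corollary 2.8 of the reference on toric fiber products, whereas you derive the statement directly from the Borisov--Hua formula $H^p(X,\mathcal{O}(D))=\bigoplus_{a'}H_{\dim(X)-p}(C_{I_{a'}},K)$ quoted just before. Your derivation is the natural one and matches how the paper uses the result later. Two points deserve care if you write it up. First, the grading convention you worry about is settled by the paper's own example for $\mathcal{E}_1$: there a $k$-dimensional cone sits in homological degree $k$ and the chain complex is augmented in degree $0$, so $C_{I_{a'}}$ computes $\tilde H_{*-1}$ of the sphere $S^{n-1}$ shifted up by one; with that convention the effective case indeed gives $H_n(C_J,K)=\tilde H_{n-1}(S^{n-1},K)=K\neq 0$, while with the naive abstract simplicial-complex grading the complex would only be $(n-1)$-dimensional and your $p=0$ specialization would read as identically zero, so fixing this convention is not cosmetic but essential. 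Second, the vanishing step in the non-effective case should be phrased combinatorially rather than via ``proper closed subsets of $\mathbb{R}^n$'': in a complete simplicial fan every wall is a facet of exactly two maximal cones and the dual graph is connected, so any degree-$n$ cycle supported on a proper subset of the maximal cones must vanish coefficient by coefficient, starting from a wall separating an included cone from an omitted one. With those two points made precise, your proof is complete and, unlike the paper, self-contained modulo the quoted proposition.
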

Thus to verify that $\fas{L}$ satisfies (ii)' write $\fas{L}_\beta\otimes\inv{\fas{L}_\alpha}$ as $\fas{O}(D)$
with $D$ a $T$-Cartier divisor on the
toric variety. Then it is enough to prove that it  $D$ is not linearly equivalent to a linear combination with
non-negative coefficients of the
principal toric divisors.\\
\indent Condition (i)' is more laborious. In order to verify it we need some acyclicity criteria. The first
one it is an easy consequence of a vanishing theorem by Mustata whose statement we briefly recall:
\begin{thm}[Mustata]
Let $Y$ be a complete smooth toric variety, and be $\fas{L}$ an ample line bundle on $Y$. If
$Z_{i_1},\;\ldots,\;Z_{i_k}$ are distinct toric divisors of $Y$, then the line bundle
$\fas{L}\otimes\fas{O}_X(-Z_{i_1}-\cdots-Z_{i_k})$ is acyclic.
\end{thm}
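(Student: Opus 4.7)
My approach is induction on $k$, the number of distinct toric divisors being subtracted from $\mathcal{L}$.

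The base case $k=0$ is Demazure's vanishing theorem: $H^p(Y, \mathcal{L}) = 0$ for $p \geq 1$ when $\mathcal{L}$ is ample on a smooth complete toric variety. Through the cohomology formula in the proposition preceding the statement, this reduces to verifying that each simplicial subcomplex $C_{I_{a'}}$ appearing in the decomposition is either empty or has vanishing reduced homology in all degrees $\leq n-1$. Strict upper-convexity of the $\Sigma$-piecewise-linear support function $\psi_{\mathcal{L}}$ attached to the ample class $\mathcal{L}$ shows that for each $u \in M$ the set of rays $v_\rho$ with $\langle u, v_\rho\rangle + \psi_{\mathcal{L}}(v_\rho) \geq 0$ spans a star-shaped subfan with respect to the origin. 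Its support is therefore contractible whenever nonempty, which yields the required homology vanishing.

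For the inductive step, assume the statement for $k-1$ distinct toric divisors and set $Z := Z_{i_k}$. Tensoring the ideal-sheaf sequence $0 \to \mathcal{O}_Y(-Z) \to \mathcal{O}_Y \to \mathcal{O}_Z \to 0$ with $\mathcal{L}(-Z_{i_1}-\cdots-Z_{i_{k-1}})$ yields
\begin{equation*}
0 \to \mathcal{L}\Bigl(-\sum_{j=1}^{k} Z_{i_j}\Bigr) \to \mathcal{L}\Bigl(-\sum_{j=1}^{k-1} Z_{i_j}\Bigr) \to \mathcal{L}\Bigl(-\sum_{j=1}^{k-1} Z_{i_j}\Bigr)\Big|_{Z} \to 0.
\end{equation*}
The middle term is acyclic by the inductive hypothesis on $Y$. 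For the right term, $Z$ is itself a smooth complete toric variety of dimension $n-1$, the restriction $\mathcal{L}|_Z$ is ample, and each $Z_{i_j}|_Z$ (for $j<k$) is either zero (precisely when $v_{i_j}$ and $v_{i_k}$ do not span a cone of $\Sigma$) or a toric prime divisor on $Z$ corresponding to the $2$-cone they generate. The nonzero restrictions are pairwise distinct, since distinct $v_{i_j}$'s produce distinct $2$-cones through $v_{i_k}$ and hence distinct orbit closures on $Z$. The inductive hypothesis therefore applies on $Z$ to at most $k-1$ distinct divisors, giving acyclicity of the right term, and the long exact cohomology sequence forces the leftmost term to be acyclic as well.

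The main obstacle is the base case. Demazure vanishing is classical, but a self-contained derivation from the combinatorial formula requires translating the cohomological weight decomposition into contractibility statements about subcomplexes cut out by lattice conditions, and the heart of the argument is a convex-geometric analysis of the polytope $P_{\mathcal{L}}$ associated to the ample class. By contrast the inductive step is entirely formal, relying only on two elementary facts: ampleness is preserved under restriction to a closed subvariety, and distinct toric prime divisors restrict to toric prime divisors that are distinct or zero — both immediate from the combinatorics of the fan.
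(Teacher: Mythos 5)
Your inductive step does not close. From the twisted ideal-sheaf sequence the long exact sequence in cohomology reads
\begin{equation*}
\cdots \to H^{0}\Bigl(Y,\mathcal{L}\bigl(-\textstyle\sum_{j<k}Z_{i_j}\bigr)\Bigr)\to H^{0}\Bigl(Z,\mathcal{L}\bigl(-\textstyle\sum_{j<k}Z_{i_j}\bigr)\Big|_{Z}\Bigr)\to H^{1}\Bigl(Y,\mathcal{L}\bigl(-\textstyle\sum_{j\leq k}Z_{i_j}\bigr)\Bigr)\to H^{1}\Bigl(Y,\mathcal{L}\bigl(-\textstyle\sum_{j<k}Z_{i_j}\bigr)\Bigr)=0,
\end{equation*}
so acyclicity of the middle and right-hand terms only kills $H^{i}$ of the left-hand term for $i\geq 2$; the group $H^{1}$ is identified with the cokernel of the restriction map on global sections. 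You assert that ``the long exact cohomology sequence forces the leftmost term to be acyclic as well,'' but that is exactly where the argument fails: you would need surjectivity of $H^{0}(Y,\mathcal{L}(-\sum_{j<k}Z_{i_j}))\to H^{0}(Z,\mathcal{L}(-\sum_{j<k}Z_{i_j})|_{Z})$, which is not addressed and which, given your inductive hypothesis, is equivalent to the $H^{1}$-vanishing you are trying to prove. So the induction is circular at its crucial point. (The combinatorial bookkeeping about restrictions $Z_{i_j}|_{Z}$ being zero or pairwise distinct prime toric divisors, ampleness of $\mathcal{L}|_{Z}$, and the Demazure base case are all fine; the problem is only the degree-one step.)

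For comparison: the paper does not prove this statement at all, it simply cites Musta\c{t}\u{a} (Corollary 2.5 of \cite{must}), and Musta\c{t}\u{a}'s own argument is not a formal restriction induction --- it goes through the toric Frobenius morphism (injectivity of the induced maps on cohomology coming from the splitting of $(\pi_p)_*$, combined with Demazure/Bott-type vanishing), precisely because the surjectivity-of-sections issue above cannot be obtained for free. If you want a self-contained proof along combinatorial lines, the workable route is to avoid the exact sequence altogether and compute $H^{i}(Y,\mathcal{L}(-\sum_j Z_{i_j}))$ directly by the weight decomposition (as in the Borisov--Hua type formula quoted in the paper), showing that for each character the relevant subcomplex of the fan is contractible or empty; that is a genuinely different and harder convex-geometric argument than the one you sketched for the base case, because the twist by $-\sum_j Z_{i_j}$ destroys the convexity you used there.
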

\begin{proof}
 \cite[Corollary 2.5]{must}
\end{proof}
\begin{rem}\label{mustata}
 If $X$ is a toric Fano variety, then all the  line bundles of the form
$$\fas{O}_X\left(\sum_i \varepsilon_iZ_i\right)$$
with $Z_i$ principal toric divisors and $\varepsilon_i\in\{0,1\}$ are acyclic.
\end{rem}

Observing the statement of condition (i)', it is evident that the hypothesis of Proposition \ref{mustata} are
too  strong to apply to all the line
bundles we need to be acyclic. Indeed, the set of divisors we have to check is ``symmetric'' in the sense that
if $D$ is one element of the set, than
also $-D$ is an element. But (toric) effectiveness is not a symmetric propriety, thus if we can use the
previous criterion to check the acyclicity of
a line bundle $\fas{L}$, surely we will not be able to apply it to its dual. In conclusion we need a weaker
criterion. In \cite{borisov}, Borisov and
Hua gave sufficient and necessary conditions for a line bundle on a toric variety to be acyclcic. In the next
paragraphs we will explain their
method.\\
\indent Let us introduce the notion of forbidden set.
\begin{defin}
 Let $X$ a toric variety with $m$ ray  generators. For every proper subset $I\subsetneq J=\{1,\ldots,m\}$
consider the associated simplicial complex
$C_I$. We say that $I$ is a \emph{forbidden} set if $C_I$ has a non-trivial homology.
\end{defin}
\begin{exa}
 It is easy to see that $\{2,4,5\}$ is a forbidden set for $\mathcal{E}_1$. In fact the simplicial complex
$C_I$ consists in 0 maximal cones, one face
and 3 edges. Its (reduced) chain complex is
$$0\rightarrow K\rightarrow K^3\rightarrow K\rightarrow 0$$
that is obviously not exact. Then $C_I$ has a non-trivial homology.
\end{exa}

Thanks to the following result, forbidden sets play a key role in proving the acyclicity of line bundles on
toric Fano varieties.
\begin{prop}[Borisov-Hua] Let $X$ a toric Fano variety and consider all forbidden sets $I\subset J$. For
each of them consider the line bundles of the form
\begin{equation}\label{forbidden}
 \mathcal{O}_X\left(-\sum_{i\notin I}Z_i+\sum_{i\in I}a_iZ_i-\sum_{i\notin I}a_iZ_i\right)
\end{equation}
with $a_i\in\mathbb{Z}_{\geq 0}$ for every $i\in J$. Then $\mathcal{L}\in\mathrm{Pic}(X)$ is acyclcic if and
only if $\mathcal{L}$ is not of the form
\eqref{forbidden}.
\end{prop}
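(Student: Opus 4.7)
The plan is to deduce this proposition directly from the cohomology formula recalled just before, namely
$$H^p(X, \mathcal{O}(D)) = \bigoplus_{a'} H_{n-p}(C_{I_{a'}}, K),$$
where $n = \dim X$ and the direct sum runs over all $a' \in \mathbb{Z}^m$ with $D \sim \sum_\rho a'_\rho Z_\rho$. By definition, $\mathcal{L} = \mathcal{O}(D)$ is acyclic precisely when this sum vanishes for every $p \geq 1$. Since $C_{I_{a'}}$ sits inside the fan $\Sigma$ as a subcomplex, its simplicial dimension is at most $n-1$, so its reduced homology groups can only be non-trivial in degrees $k$ between $0$ and $n-1$, and each such non-triviality contributes to $H^{n-k}(X,\mathcal{L})$ with $n-k \geq 1$. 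Consequently, $\mathcal{L}$ fails to be acyclic if and only if some representative $a'$ of $D$ makes $I_{a'}$ a forbidden subset of $J$, and the task is to translate this condition into the shape \eqref{forbidden}.

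First I would handle the ``if of the form \eqref{forbidden} then not acyclic'' direction. Given a proper forbidden subset $I \subsetneq J$ and non-negative integers $\{a_i\}_{i \in J}$, the tuple $a'$ defined by $a'_i := a_i$ for $i \in I$ and $a'_i := -1 - a_i$ for $i \notin I$ is a representative of $D$ whose associated non-negative index set is exactly $I$; since $I$ is forbidden, the formula produces a non-vanishing summand in some $H^p(X, \mathcal{L})$ with $p \geq 1$, so $\mathcal{L}$ is not acyclic. For the converse, if $\mathcal{L}$ is not acyclic the formula provides a representative $a'$ of $D$ such that $I := I_{a'}$ is forbidden; outside $I$ the coefficient $a'_i$ is a strictly negative integer and thus $\leq -1$, so setting $a_i := a'_i$ for $i \in I$ and $a_i := -1 - a'_i$ for $i \notin I$ yields non-negative integers satisfying
$$\sum_\rho a'_\rho Z_\rho \;=\; \sum_{i \in I} a_i Z_i + \sum_{i \notin I}(-1 - a_i) Z_i \;=\; -\sum_{i \notin I} Z_i + \sum_{i \in I} a_i Z_i - \sum_{i \notin I} a_i Z_i,$$
which is exactly the form \eqref{forbidden} for the data $(I, \{a_i\})$.

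The substantive content of the argument is packed into the cohomology formula inherited from Borisov: once that is granted the equivalence becomes a formal bookkeeping. The only subtlety inside the sketch above is the dimensional bound $\dim C_{I_{a'}} \leq n - 1$ for proper $I_{a'} \subsetneq J$, which is what forces all non-trivial homology contributions of a forbidden set to land in \emph{positive} cohomological degree; correspondingly, the case $I_{a'} = J$ (a toric effective representative) only contributes to $H^0$ and is correctly excluded from the definition of forbidden set. The Fano hypothesis enters only through the previous proposition, where the ample anticanonical class guarantees that the representatives of $D$ faithfully parametrise the cohomology decomposition used here.
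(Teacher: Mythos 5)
Your strategy is reasonable and in fact does more than the paper does: the paper's entire ``proof'' of this proposition is the citation to Borisov--Hua, so deriving the criterion from the cohomology decomposition quoted just before it is the natural route, and your dictionary between representatives and forbidden forms (put $a'_i=a_i$ for $i\in I$ and $a'_i=-1-a_i$ for $i\notin I$, so that $I_{a'}=I$, and conversely $a_i:=a'_i$ resp.\ $-1-a'_i$) is exactly the right bookkeeping for both implications.

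However, the degree accounting contains a genuine gap. First, reduced homology can be non-trivial in degree $-1$: the empty set is a forbidden set (it heads every list of forbidden sets used later in the paper, precisely because $\tilde H_{-1}(C_\emptyset)=K$), and its forbidden forms are the divisors $D$ with $D\le K_X$, which are certainly not acyclic since $H^n(X,\mathcal{O}(D))\ne 0$ by Serre duality. Your stated range $0\le k\le n-1$ misses this case, and with the decomposition read literally as $H_{\dim X-p}$ that contribution would sit in $H^{n+1}$, i.e.\ nowhere, so your ``forbidden form $\Rightarrow$ not acyclic'' direction does not cover $I=\emptyset$. Second, your claim that a representative with $I_{a'}=J$ only feeds $H^0$ is incompatible with that same literal reading: since the fan is complete and simplicial, $C_J$ is a triangulated $(n-1)$-sphere, so $\tilde H_{n-1}(C_J)=K$ would land in $H^1$ and make every line bundle with a toric effective representative non-acyclic, which is absurd. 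Both problems disappear once the formula is read with the shifted index $\tilde H_{\dim X-p-1}(C_{I_{a'}},K)$ (the convention consistent with the paper's example, whose reduced chain complex includes the augmentation term): then $I_{a'}=J$ feeds $H^0$, $I=\emptyset$ feeds $H^n$, and for a proper subset $I\subsetneq J$ the fact that every non-trivial class lands in some $H^p$ with $p\ge 1$ follows not from the dimension bound you invoke but from the vanishing of the top reduced homology $\tilde H_{n-1}(C_I)$ of a proper subcomplex of the sphere $C_J$. With these corrections your argument closes up; as written, the index shift and the omitted degree $-1$ are real gaps rather than notational choices. (Also, the Fano hypothesis plays no role in this equivalence beyond matching the statement's setting; it is not what makes the decomposition valid.)
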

\begin{proof}
 \cite[Proposition 4.3]{borisov}
\end{proof}

We call the line bundles like \eqref{forbidden} \emph{forbidden line bundles} (or \textit{forbidden forms})
relative to the set $I$. By abuse of
notation we shall say that a divisor $D$ is of a forbidden form relative to $I$ or that it can be put in a
forbidden form relative to $I$ if it
is linearly equivalent to a divisor $F$ such that $\fas{O}(F)$ is a forbidden form.\\

In what follows we will prove that the full sequences we got in the previous sections are indeed strongly
exceptional. Although Boris-Hua method's
gives sufficient and necessary condition to a line bundle to be acyclic, it needs a lot of tedious
calculations and so we will use also  the first
criterion to lessen the number of divisors we need to check.

\subsection{$\mathcal{D}_1$ and $\ddue$}
Since we will need to apply Borisov-Hua's method, the first thing to do is find out which subsets of the set
of vertices of $\duno$ and $\ddue$ are
forbidden. It turns out that there are eleven of them.
\begin{prop}The forbidden sets for $\mathcal{D}_i$, $i=1,2$ are
$$\emptyset,\;\{3,6\},\;\{4,6\},\;\{3,5\},\;\{1,2,5\},\;\{1,2,4\},\;\{1,2,4,5\},$$
$$\{1,2,3,5\},\;\{1,2,4,6\},\;\{3,5,6\},\;\{3,4,6\}.$$
\end{prop}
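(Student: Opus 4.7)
The plan is to exploit the fact that $\mathcal{D}_1$ and $\mathcal{D}_2$ have identical primitive collections; their fans therefore carry the same abstract simplicial structure (the same subsets of rays span a cone), the complexes $C_I$ agree for the two varieties, and a single combinatorial calculation handles both cases. The first step is to convert the primitive-collection data into cone data: a subset of rays spans a cone iff it contains no primitive collection, so the $\binom{6}{3}=20$ triples split into $12$ non-cones (the two $3$-primitives $\{1,2,4\},\{1,2,5\}$ together with the ten triples containing one of the $2$-primitives $\{3,5\},\{3,6\},\{4,6\}$) and $8$ maximal cones, which must be $\{1,2,3\},\{1,2,6\},\{1,3,4\},\{2,3,4\},\{1,4,5\},\{2,4,5\},\{1,5,6\},\{2,5,6\}$; this count is consistent with the Euler identity $6-12+8=2$ for the $S^2$ triangulated by the fan.

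Next I would sweep through the proper subsets $I\subsetneq\{1,\dots,6\}$ by cardinality. The empty set is forbidden by convention ($\tilde H_{-1}=K$); singletons give a point; and pairs are forbidden exactly when they are $2$-primitives, yielding $\{3,5\},\{3,6\},\{4,6\}$. For triples, $C_I$ is either a filled triangle (contractible), the boundary of a triangle $\simeq S^1$ when $I$ is a $3$-primitive, a path of two edges when $I$ contains exactly one $2$-primitive, or an edge together with an isolated vertex when $I$ contains two $2$-primitives; only the second and fourth cases are forbidden, producing $\{1,2,4\},\{1,2,5\},\{3,4,6\},\{3,5,6\}$. For the fifteen $4$-subsets I would read off $|V|$, $|E|$, $|F|$ from the cone data, check connectedness and injectivity of $\partial_2$ on the at most four filled triangles present, and recover $\tilde H_1$ from $\tilde\chi$; the three exceptional cases $\{1,2,3,5\},\{1,2,4,5\},\{1,2,4,6\}$ each carry a non-bounding $1$-cycle formed by an edge whose endpoints are linked through a filled triangle but whose closing triangle is primitive (hence unfilled), while the remaining twelve $4$-subsets turn out to be contractible. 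Finally, each $5$-subset $I=\{1,\dots,6\}\setminus\{k\}$ gives $C_I=S^2\setminus(\text{open star of }v_k)$, a closed $2$-disk, which is contractible and never forbidden.

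The main obstacle is the $4$-subset case analysis: the triple classification is forced by the combinatorics of primitive collections, but each $4$-subset requires an individual check of its homotopy type. This burden is halved by the $1\leftrightarrow 2$ symmetry of the fan and streamlined by a uniform routine --- compute $\tilde\chi(C_I)$, confirm $\tilde H_0=\tilde H_2=0$ via connectedness and a small linear-algebra check of injectivity of $\partial_2$, and then identify, where $\tilde H_1$ is non-zero, an explicit non-bounding loop. Collecting the contributions at each cardinality produces exactly the eleven forbidden sets in the proposition.
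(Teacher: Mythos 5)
Your proposal is correct, but it proves the statement by a route different from the paper's. The paper's own proof is essentially a two-line transfer argument: it quotes \cite[Proposition 5.7]{smallpicard} for the list of forbidden sets of $\ddue$, and then observes that forbidden sets depend only on the primitive collections (not on the primitive relations), so the identical list holds for $\duno$. You instead carry out the whole combinatorial verification from scratch: you use the primitive collections as the minimal non-faces to reconstruct the simplicial complex (your $12$ edges and $8$ maximal cones are the correct ones, and the Euler count $6-12+8=2$ is a good sanity check), and then you sweep the proper subsets $I$ by cardinality, getting the three $2$-primitives, the two $3$-primitives and the two unions of overlapping $2$-primitives at cardinality three, the three sets $\{1,2,4,5\},\{1,2,3,5\},\{1,2,4,6\}$ at cardinality four (via $\tilde\chi$, connectedness and injectivity of $\partial_2$ on a proper subfamily of the sphere's triangles), and contractible vertex-deletions (closed disks) at cardinality five; this reproduces exactly the eleven sets of the proposition. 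Your opening observation that the complexes $C_I$ coincide for $\duno$ and $\ddue$ because the primitive collections coincide is precisely the paper's transfer step, so the two arguments share that idea; the difference is that the paper outsources the computation for $\ddue$ to the reference, while your version is self-contained and in fact also reproves the cited result. The only point to state carefully is the convention that makes $\emptyset$ forbidden (nontrivial reduced homology $\tilde H_{-1}$ of the empty complex), which you do flag, and which matches Borisov--Hua's setup used elsewhere in the paper.
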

\begin{proof}
We know from \cite[Proposition 5.7]{smallpicard} that the aforementioned sets are indeed the forbidden sets
for $\mathcal{D}_2$. Now, being a
forbidden sets depends just upon the primitive collections of a variety and  not by their relations. Thus the
same result is true also for
$\mathcal{D}_1$, since $\mathcal{D}_1$ and $\mathcal{D}_2$ have the same primitive collections.
\end{proof}
\begin{thm}\label{prop:duno}
 The following is a full strongly exceptional sequence for $\duno$:
\begin{gather}\label{eq:seq
d1}\fas{O},\;\fas{O}(Z_4+Z_5),\;\fas{O}(2Z_4+2Z_5),\;\fas{O}(Z_6),\;\fas{O}(Z_5+Z_6),\\
\notag\;\fas{O}(Z_4+Z_5+Z_6),\; 
\fas{O}(Z_4+2Z_5+Z_6),\;\fas{O}(2Z_4+2Z_5+Z_6).\end{gather}
\end{thm}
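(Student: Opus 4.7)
The fullness of \eqref{eq:seq d1} is already established by Proposition \ref{full d1}, so the task is to verify the strong exceptionality conditions (i)' and (ii)' from Section 4 for these eight line bundles.

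The first preparatory step is to fix the basis $Z_4,Z_5,Z_6$ of $\Pic(\duno)$ and use the primitive relations to eliminate $Z_1,Z_2,Z_3$: one reads off $Z_1\sim Z_4+Z_5$, $Z_2\sim Z_4+Z_5$, and $Z_3\sim -2Z_4-Z_5+Z_6$. With these in hand, for every ordered pair $(\alpha,\beta)$ I compute the divisor $D_{\alpha\beta}$ satisfying $\fas{L}_\beta\otimes\fas{L}_\alpha^{-1}\simeq\fas{O}(D_{\alpha\beta})$; since $D_{\beta\alpha}=-D_{\alpha\beta}$, there are at most $\binom{8}{2}=28$ classes to examine.

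Condition (ii)' is the easier half: by the Corollary following Borisov's formula it reduces to showing, for the pairs with $\alpha>\beta$, that $D_{\alpha\beta}$ is not linearly equivalent to any non-negative combination of $Z_1,\ldots,Z_6$. In practice, after fixing a representative one runs through the other representatives obtained by adding integer multiples of the three linear equivalences above, and checks that in none of them are all coefficients non-negative. This is a bounded search that can be dispatched pair by pair.

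The main obstacle is condition (i)'. For roughly half of the twenty-eight classes --- the \emph{positive} ones --- $D_{\alpha\beta}$ can be brought into the shape $\sum\varepsilon_iZ_i$ with $\varepsilon_i\in\{0,1\}$, and then Mustata's criterion (Remark \ref{mustata}) applies at once because $\duno$ is Fano. For the remaining \emph{negative} classes Mustata is of no help, and one must invoke the Borisov--Hua criterion: for each of the eleven forbidden sets $I$ listed in the preceding proposition, I have to rule out the possibility that $D_{\alpha\beta}$ is linearly equivalent to a divisor of the form \eqref{forbidden} for some choice of non-negative integers $a_i$. Each individual check reduces to a small integral feasibility problem in the $a_i$ together with the integer coefficients used to pass between representatives via the three relations, but the total volume of cases --- of the order of fifteen divisor classes tested against eleven forbidden sets --- is where the bulk of the work, and the real risk of an oversight, lies. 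Once all these feasibility problems are shown to be infeasible, every $\fas{O}(D_{\alpha\beta})$ is acyclic, condition (i)' holds, and the theorem follows.
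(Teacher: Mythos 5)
Your strategy is exactly the paper's: fullness is quoted from Proposition \ref{full d1}, the relations $Z_1\sim Z_2\sim Z_4+Z_5$, $Z_3\sim -2Z_4-Z_5+Z_6$ are the correct ones, the ``positive'' differences are handled by Remark \ref{mustata} after being rewritten as $0/1$-combinations of the $Z_i$, the remaining differences are tested against the eleven forbidden sets via Borisov--Hua, and condition (ii)' is reduced to non-effectivity of the relevant classes. Nothing in the plan would fail.

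The gap is that, as written, it is only a plan: for this theorem the proof \emph{is} the finite verification, and you explicitly defer it (``of the order of fifteen divisor classes tested against eleven forbidden sets \ldots is where the bulk of the work, and the real risk of an oversight, lies''). None of those feasibility problems is actually shown to be infeasible, and none of the non-effectivity checks for (ii)' is carried out, so the statement is not yet proved. To close the gap you do not need to test every class against every forbidden set one by one; the paper's device is to express every difference in the basis $(Z_4,Z_5,Z_6)$ via \eqref{eq:divisor d1} and eliminate forbidden sets wholesale by coefficient bounds: every class to be checked has $z_6\ge -1$, which kills all sets containing neither $3$ nor $6$; bounds on $z_4$ and $z_5$ kill the sets containing $3$ but missing $1,2,4$ or $1,2,5$; and the invariant $z_4-z_5=a_4-a_3-a_5$, which takes only the values $0,\pm1$ on the remaining classes (grouped accordingly), rules out $\{4,6\}$ and $\{1,2,4,6\}$, leaving only $\{1,2,3,5\}$, which is excluded by a short argument on $z_6$. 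Similarly, (ii)' follows at once from \eqref{eq:divisor d1}: a negative $z_6$ forces $a_3+a_6<0$, so those classes are not toric effective, and the rest are negatives of effective divisors. Incorporating these reductions (or an equivalent explicit case check) is what turns your outline into a proof.
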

\begin{proof}
Proposition \ref{full d1} tells us that the sequence \eqref{eq:seq d1} is full. So, in order to prove it is
also strongly exceptional, we have just to
show that the line bundles in the sequence satisfy the required vanishing. As first step we will demonstrate
that the following line bundles are
acyclic:
 \begin{gather} \label{eq:acyclic d1}
\fas{O}(\pm(Z_4+Z_5)),\;\fas{O}(\pm(2Z_4+2Z_5),\;\fas{O}(\pm(Z_5+Z_6)) \fas{O}(\pm(Z_4+Z_5+Z_6)),\;\\ \notag
\;\fas{O}(\pm Z_6),
\fas{O}(\pm(Z_4+2Z_5+Z_6)),\; \fas{O}(\pm(2Z_4+2Z_5+Z_6)),\;\fas{O}(\pm(-Z4-Z_5+Z_6),\;\\ \notag
\fas{O}(\pm(-Z_4+Z_6)),\;\fas{O}(\pm(-2Z_4-2Z_5+Z_6)),\;\fas{O}(\pm(-2Z_4-Z_5+Z_6)),\\ \notag
\fas{O}(\pm Z_5),\;\fas{O}(\pm(Z_4+2Z_5),\;\fas{O}
(\pm Z_4),\;\fas{O}(\pm(2Z_4+Z_5)).
\end{gather}
In order to shorten our argument, we shall split these invertible sheaves into four groups.
\begin{itemize}
 \item[a)] $\fas{O}(Z_4+Z_5),$ $\fas{O}(2Z_4+2Z_5),$ $\fas{O}(Z_6),$ $\fas{O}(Z_5+Z_6),$
$\fas{O}(Z_4+Z_5+Z_6),$\\ $\fas{O}(Z_4+2Z_5+Z_6),$
$\fas{O}(2Z_4+2Z_5+Z_6)$, $\fas{O}(-Z_4+Z_6),$ $\fas{O}(Z_5+Z_6),$\\  $\fas{O}(-2Z_4-Z_5+Z_6),$
$\fas{O}(Z_5),$
$\fas{O}(Z_4+2Z_5),$ $\fas{O}(Z_4),$ $\fas{O}(2Z_4+Z_5);$
\item[b)] $\fas{O}(-Z_4-Z_5),$ $\fas{O}(-2Z_4-2Z_5),$ $\fas{O}(-Z_6),$ $\fas{O}(-Z_4-Z_5-Z_6),$\\
$\fas{O}(-2Z_4-2Z_5-Z_6),$ $\fas{O}(-Z_4-Z_5+Z_6),$
$\fas{O}(2Z_4+2Z_5-Z_6),$\\ $\fas{O}(-2Z_4-2Z_5+Z_6),\;\fas{O}(Z_4+Z_5-Z_6),$
\item[c)] $\fas{O}(-Z_5-Z_6),$  $\fas{O}(-Z_4-2Z_5-Z_6),$ $\fas{O}(Z_4-Z_6),$ $\fas{O}(2Z_4+Z_5-Z_6),$
$\fas{O}(-Z_5),$ $\fas{O}(-Z_4-2Z_5)$, $\fas{O}(-Z_4-2Z_5+Z_6);$    
\item[d)] $\fas{O}(-Z_4),$ $\fas{O}(-2Z_4-Z_5).$  
\end{itemize}
We claim that the line bundles in group a) are line bundle associated to toric effective divisors, whose
coefficients of the principal toric divisors are either zero or one and hence they are acyclic due
Remark \ref{mustata}. Indeed this is straightforward for those line
bundle of the form $\fas{O}(aZ_4+bZ_5+cZ_6)$ with $a,b,c$ already in $\{0,1\}$. Let us see, as an example,
that $-2Z_4-Z_5+Z_6$ is a divisor linearly equivalent to
$Z_3$; all the other are proved by similar reasoning. First of all we want to write the generic divisor
$D=\sum_{\rho=1}^6 a_\rho
Z_\rho$ of $\duno$ in the basis of $\mathrm{Pic}(\duno)$ given by $Z_4$, $Z_5$ and $Z_6$, using the following
relations:
$$Z_1=Z_2=Z_4+Z_5,\quad\quad Z_3=-2Z_4-Z_5+Z_6.$$
We get:
\begin{equation}\label{eq:divisor d1}
 D=(a_1+a_2-2a_3+a_4)Z_4+(a_1+a_2-a_3+a_5)Z_5+(a_3+a_6)Z_6.
\end{equation}
Using the previous formula, we can easily be proved that $-2Z_4+-Z_5+Z_6$ is linearly equivalent to $Z_3$, and
then $\fas{O}(-2Z_4+-Z_5+Z_6)\simeq\fas{O}(Z_3)$. \\
 To prove the
acyclicity of the remaining line
bundles we need to use Borisov-Hua's result.\\
Now denote with $z_i$ for $i=4,5,6$ the coefficient of $Z_i$ in the representation of $D$; all the divisors we
have to check have the coefficient
$z_6\geq -1$, hence these divisors cannot be written in the forbidden forms relative to a set $I$ which does
not have neither 3 nor 6 among its
elements. In fact the coefficient $z_6$ of the divisors in the forbidden forms relative to those sets is
always less or equal to -2. The sets we have
still to check are:
$$\{3,6\},\;\{4,6\},\;\{3,5\},\;\{1,2,3,5\},\;\{1,2,4,6\},\;\{3,5,6\},\;\{3,4,6\}.$$
For similar reasons we know that the line bundles in \eqref{eq:acyclic d1} cannot be put in the forbidden
forms relative to a set $I$ when it:
\begin{enumerate}
 \item contains 3 but neither 1 nor 2 nor 4 (otherwise we will have $z_4\leq -3$);
\item contains 3 but neither 1 nor 2 nor 5 (otherwise we will have $z_5\leq -3$).
\end{enumerate}
After this second cancellation we remain with
$$\{4,6\},\;\{1,2,3,5\},\;\{1,2,4,6\}.$$
From \eqref{eq:divisor d1} we can deduce that
$$a_4-a_3-a_5=z_4-z_5,$$
thus we have
\begin{equation}\label{eq:cases z4-z5}a_4-a_3-a_5=\begin{cases}
               0 & \text{If we have $\fas{O}(D)$ in group b)},\\
               1 & \text{If we have $\fas{O}(D)$ in group c)},\\
               -1 & \text{If we have $\fas{O}(D)$ in group d)}.
              \end{cases}\end{equation}
It follows immediately that neither one of the divisors in those groups can be linearly equivalent to a line
bundle in a forbidden form relative to
$\{4,6\}$ or $\{1,2,4,6\}$, otherwise
it should have $z_4-z_5\geq 2$. It remains to check that the divisors cannot be put in the forbidden forms
relative to $\{1,2,3,5\}$. The forbidden
forms relative to this set have the  difference $z_4-z_5\leq-1$. It follows from this and \eqref{eq:cases
z4-z5} that if $\fas{O}(D)$ can be put in
one of this forbidden forms, then it is  in group d) and $a_3=a_5=0$. But $6\notin\{1,2,3,5\}$, hence we
should have $z_6=a_3+a_6\leq -1$ while none
of the divisor in group d) have a negative $z_6$. Thus we can conclude that all the divisors in groups a), b),
c) and d) are acyclic.\\
To finish the proof we have still to check that the line bundles associated to the following divisors have no
sections.
\begin{itemize}\item[a)] $-Z_6$, $-Z_5-Z_6$, $-Z_4-Z_5-Z_6$, $-Z_4-2Z_5-Z_6$, $-2Z_4-2Z_5-Z_6$,\linebreak
$Z_4+Z_5-Z_6$, $Z_4-Z_6$, $2Z_4+2Z_5-Z_6$,
$2Z_4+Z_5-Z_6$;
\item[b)] $-Z_4-Z_5$, $-2Z_4-2Z_5$, $-Z_5$, $-Z_4-2Z_5$, $-Z_4$, $-2Z_4-Z_5$, $-Z_5$.\end{itemize}
Looking at \eqref{eq:divisor d1} is straightforward to see that all the divisors in group a), which have a
negative $z_6$, are not linearly equivalent
to a toric effective divisor, hence their associated line bundles have not zero-cohomology. The divisors in b)
are the opposite of some positive
divisors and hence they cannot  have sections and the statement isproved.  
\end{proof}



\begin{thm}\label{prop:ddue}
 The following is a full strongly exceptional sequence for $\ddue$:
\begin{gather}\label{eq:seq
d2}\fas{O},\;\fas{O}(Z_4+Z_5),\;\fas{O}(2Z_4+2Z_5),\;\fas{O}(Z_6),\;\fas{O}(Z_5+Z_6),\\
\notag\;\fas{O}(Z_4+Z_5+Z_6),\; 
\fas{O}(Z_4+2Z_5+Z_6),\;\fas{O}(2Z_4+2Z_5+Z_6).\end{gather}
\end{thm}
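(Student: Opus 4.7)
The plan is to mimic the proof of Theorem \ref{prop:duno}, carefully tracking the places where the primitive relations of $\ddue$ differ from those of $\duno$. First, fullness of \eqref{eq:seq d2} is immediate from Proposition \ref{full d2}: Thomsen's algorithm already produces exactly eight distinct summands, matching $\mathrm{rk}(K_0(\ddue))=8$, so no additional Koszul-style reduction (such as the one used for $\duno$ in Proposition \ref{full d1}) is required here.

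Next I would reduce strong exceptionality to the verification of conditions (i)' and (ii)' on the differences $\fas{L}_\beta\otimes\fas{L}_\alpha^{-1}$. Since these differences are the same line bundles (in their $Z_4,Z_5,Z_6$-expressions) as those appearing in the proof of Theorem \ref{prop:duno}, the list to check coincides with \eqref{eq:acyclic d1}. Moreover, as $\ddue$ and $\duno$ share the same primitive collections (cf.\ \cite[Proposition 2.5.6]{batyrev}), the eleven forbidden sets are also the same.

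The key preparatory step is to re-express a generic divisor in the Picard basis $\{Z_4,Z_5,Z_6\}$ for $\ddue$. Using the relations $Z_1\sim Z_2\sim Z_4+Z_5$ and $Z_3\sim -Z_4+Z_6$ coming from the primitive relations of $\ddue$, a generic divisor $D=\sum_{\rho=1}^6 a_\rho Z_\rho$ becomes
\begin{equation}\label{eq:divisor d2 proposal}
D\sim (a_1+a_2-a_3+a_4)Z_4+(a_1+a_2+a_5)Z_5+(a_3+a_6)Z_6.
\end{equation}
Writing $z_4,z_5,z_6$ for these coefficients, a crucial observation is that the invariants $z_4-z_5=a_4-a_3-a_5$ and $z_6=a_3+a_6$ coincide with those used in the $\duno$ case. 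Armed with \eqref{eq:divisor d2 proposal}, I would then run the same case analysis: sort each line bundle of \eqref{eq:acyclic d1} into those representable as $\fas{O}(\sum\varepsilon_i Z_i)$ with $\varepsilon_i\in\{0,1\}$ (handled by Remark \ref{mustata}) and those requiring Borisov-Hua; for the latter, exclude the forbidden sets one by one via the same arguments on $z_4-z_5$ and $z_6$ used in Theorem \ref{prop:duno}. Condition (ii)' then follows from \eqref{eq:divisor d2 proposal}: a divisor with $z_5<0$ or $z_6<0$ cannot be linearly equivalent to a toric effective divisor, and the negative halves of the list are opposites of effective ones.

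The principal obstacle is bookkeeping: toric effectiveness on $\ddue$ differs from that on $\duno$. For instance $-2Z_4-Z_5+Z_6$ equals $Z_3$ on $\duno$, whereas on $\ddue$ it has $z_5=-1$ and is not even effective, so it migrates from the Mustata group to the Borisov-Hua group. Because the Borisov-Hua exclusion arguments depend only on the preserved invariants $z_4-z_5$ and $z_6$, no conceptually new ingredient is needed; the labor lies in re-sorting each divisor of \eqref{eq:acyclic d1} against the new effectiveness condition imposed by \eqref{eq:divisor d2 proposal} and re-running the already-worked-out exclusions.
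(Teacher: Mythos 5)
Your proposal is correct and follows essentially the same route as the paper's proof: fullness from Proposition \ref{full d2}, the same list of difference line bundles, the expression \eqref{eq:divisor d2 proposal} (which is exactly the paper's formula for $\ddue$, using $Z_3\sim -Z_4+Z_6$), Remark \ref{mustata} for the effective $0/1$-coefficient bundles, Borisov--Hua exclusions driven by $z_4-z_5=a_4-a_3-a_5$ and $z_6=a_3+a_6$, and the same effectiveness/opposite-of-effective argument for condition (ii)'. The re-sorting you flag (e.g.\ $-2Z_4-Z_5+Z_6$ no longer being $Z_3$) is precisely how the paper's grouping differs from the $\duno$ case, so no gap remains.
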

\begin{proof}
Proposition \ref{full d2} tells us that the sequence \eqref{eq:seq d2} is full. So, in order to prove it also
is strongly exceptional, we have just to
show that the line bundles in the sequence satisfy the required vanishing. As first step we will demonstrate
that the following invertible sheaves,
obtained as a
difference of two line bundles in the sequence, are acyclic.
 \begin{gather} \label{eq:acyclic d2}
\fas{O}(\pm(Z_4+Z_5)),\;\fas{O}(\pm(2Z_4+2Z_5),\;\fas{O}(\pm(Z_5+Z_6)) \fas{O}(\pm(Z_4+Z_5+Z_6)),\;\\ \notag
\;\fas{O}(\pm Z_6),
\fas{O}(\pm(Z_4+2Z_5+Z_6)),\; \fas{O}(\pm(2Z_4+2Z_5+Z_6)),\;\fas{O}(\pm(-Z4-Z_5+Z_6),\;\\ \notag
\fas{O}(\pm(-Z_4+Z_6)),\;\fas{O}(\pm(-2Z_4-2Z_5+Z_6)),\;\fas{O}(\pm(-2Z_4-Z_5+Z_6)),\\ \notag
\fas{O}(\pm Z_5),\;\fas{O}(\pm(Z_4+2Z_5),\;\fas{O}
(\pm Z_4),\;\fas{O}(\pm(2Z_4+Z_5)).
\end{gather}

 As we already did before, it is useful to split these line bundles in
four groups
\begin{itemize}
 \item[a)] $\fas{O}(Z_4+Z_5),$ $\fas{O}(2Z_4+2Z_5),$ $\fas{O}(Z_6),$ $\fas{O}(Z_5+Z_6),$
$\fas{O}(Z_4+Z_5+Z_6),$\linebreak $\fas{O}(Z_4+2Z_5+Z_6),$
$\fas{O}(2Z_4+2Z_5+Z_6)$, $\fas{O}(-Z_4+Z_6),$ $\fas{O}(Z_5+Z_6),$ $\fas{O}(Z_5),$ $\fas{O}(Z_4+2Z_5),$
$\fas{O}(Z_4),$ $\fas{O}(2Z_4+Z_5);$
\item[b)] $\fas{O}(-Z_4-Z_5),$ $\fas{O}(-2Z_4-2Z_5),$ $\fas{O}(-Z_6),$ $\fas{O}(-Z_4-Z_5-Z_6),$
\linebreak$\fas{O}(-2Z_4-2Z_5-Z_6),$
$\fas{O}(-Z_4-Z_5+Z_6),$
$\fas{O}(2Z_4+2Z_5-Z_6),$\linebreak $\fas{O}(-2Z_4-2Z_5+Z_6),\;\fas{O}(Z_4+Z_5-Z_6);$
\item[c)] $\fas{O}(-Z_5-Z_6),$  $\fas{O}(-Z_4-2Z_5+Z_6),$ $\fas{O}(-Z_4-2Z_5-Z_6),$
$\fas{O}(Z_4-Z_6),$\linebreak $\fas{O}(2Z_4+Z_5-Z_6),$
$\fas{O}(-Z_5),$
$\fas{O}(-Z_4-2Z_5);$    
\item[d)] $\fas{O}(-Z_4),$ $\fas{O}(-2Z_4+-Z_5+Z_6),$ $\fas{O}(-2Z_4-Z_5).$  
\end{itemize}
As a first step we  write the generic divisor $D=\sum_{\rho=1}^6 a_\rho Z_\rho$ of $\ddue$ in the basis of
$\mathrm{Pic}(\ddue)$ given by $Z_4$, $Z_5$
and $Z_6$, using the following relations:
$$Z_1=Z_2=Z_4+Z_5,\quad\quad Z_3=-Z_4+Z_6.$$
We get:
\begin{equation}\label{eq:divisor d2}
 D=(a_1+a_2-a_3+a_4)Z_4+(a_1+a_2+a_5)Z_5+(a_3+a_6)Z_6.
\end{equation}
It is quite strightforward to see that all the line bundles in group a) are of the
form $\fas{O}(E)$ with $E$ a linear combination with coefficients 0 or 1 of the principal toric divisors, thus
they are acyclic for Remark \ref{mustata}. It is all the same easy to verify that if the remainig line
bundles are forbidden with respect to a set $I$, then, necessarily, we have $I=\{1,2,3,5\}$ or
$I=\{1,2,4,6\}.$
Now observe that
$$a_4-a_5-a_3=\begin{cases}
               -1& \text{If we have $\fas{O}(D)$ is in group d),}\\
0& \text{If we have $\fas{O}(D)$ is in group b),}\\
1& \text{If we have $\fas{O}(D)$ is in group c).}\\

              \end{cases}
$$
Thus we can also eliminate $\{1,2,4,6\}$, since all the forbidden forms relative to this set have
$a_4-a_3-a_5\geq 2$.\\
Now observe that for the line bundles $\fas{O}(\sum_\rho a_\rho Z_\rho)$ in the forbidden form relative to
$\{1,2,3,5\}$ we have that
$a_4-a_3-a_5\leq-1$. Thus there is just one possibility for them to apply to the line bundles arising from the
full sequence \ref{eq:seq d2}: we
should have $a_4=-1$ while $a_3=a_5=0$; then $z_4-z_5=-1$ and the line bundles must be in group d). But since
$6\notin I$ these line bundle should
have a negative $z_6$ too, and this do not apply to any of the invertible sheaves in group d).
To finish the proof we have still to check that the line bundles associated to the following divisors have no
sections.
\begin{itemize}\item[a)] $-Z_6$, $-Z_5-Z_6$, $-Z_4-Z_5-Z_6$, $-Z_4-2Z_5-Z_6$, $-2Z_4-2Z_5-Z_6$,\linebreak
$Z_4+Z_5-Z_6$, $Z_4-Z_6$, $2Z_4+2Z_5-Z_6$,
$2Z_4+Z_5-Z_6$;
\item[b)] $-Z_4-Z_5$, $-2Z_4-2Z_5$, $-Z_5$, $-Z_4-2Z_5$, $-2Z_4-Z_5$, $-Z_4$, $-Z_5$.
\end{itemize}
Looking at \eqref{eq:divisor d2} is straightforward to see that all the divisors in group a), which have a
negative $z_6$,
are not linearly equivalent to a toric effective divisor, hence their associated line bundles have not
zero-cohomology.The divisors in group b), on
the other hand, are opposite of positive divisors and hence they have no sections. Thus the statement is
proved.   
\end{proof}

\subsection{$\mathcal{E}_1$, $\edue$ and $\equat$}
Again we need to find the forbidden sets. It can be proved the following statement:
\begin{prop}The forbidden sets for $\mathcal{E}_i$, $i=1,2,4$ are
$$\emptyset,\;\{2,4\},\;\{3,5\},\;\{1,3\},\;\{2,5\},\;\{1,4\},\;\{6,7\},$$
$$\{2,4,5\},\;\{2,4,1\},\;\{3,5,1\},\;\{3,5,2\},\;\{1,3,4\},$$
$$\{1,3,6,7\},\;\{3,5,6,7\},\;\{2,4,6,7\},\;\{1,4,6,7\},\;\{2,5,6,7\},$$
$$\{1,3,5,6,7\},\;\{2,4,5,6,7\},\;\{1,3,5,6,7\},\;\{1,2,4,6,7\},$$
$$\{1,3,4,6,7\},\;\{2,3,5,6,7\},\;\{1,2,3,4,5\}$$
\end{prop}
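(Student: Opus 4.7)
The plan is to reduce the problem to a combinatorial computation by exploiting a join structure of the simplicial complex underlying the fan. Since a forbidden set depends only on this complex, hence only on the primitive collections, and since $\mathcal{E}_1$, $\mathcal{E}_2$, and $\mathcal{E}_4$ share the same list of primitive collections (all three being $S_2$-bundles over $\mathbb{P}^1$), the analysis need be carried out only once.

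The key structural observation is that the simplicial complex of $\Sigma$ is the join $C_5 \ast S^0$, where $C_5$ is a pentagon on $\{1,\dots,5\}$ and $S^0 = \{6,7\}$. Indeed, the five primitive $2$-collections contained in $\{1,\dots,5\}$ exhibit the induced complex there as the pentagon with edges $\{1,2\}$, $\{2,3\}$, $\{3,4\}$, $\{4,5\}$, $\{5,1\}$; the primitive collection $\{6,7\}$ forbids any face containing both of those vertices; and the count $k_0 = 10 = 2 \cdot 5$ of maximal cones forces each pentagon edge to be capped by exactly one of $v_6$ or $v_7$, producing precisely the ten maximal $3$-simplices of the join.

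With this in hand I would split the analysis of $I \subsetneq J$ into three cases by $I \cap \{6,7\}$. If $I \subseteq \{1,\dots,5\}$, then $C_I$ is the induced subgraph of the pentagon on $I$, i.e.\ a disjoint union of paths; it is forbidden precisely when disconnected or equal to the full pentagon, which yields $\emptyset$, the five non-edges, the five disconnected triples, and $\{1,\dots,5\}$ -- twelve forbidden sets in total. If $|I \cap \{6,7\}| = 1$, then $C_I$ is the simplicial cone on $C_{I \cap \{1,\dots,5\}}$ with apex the unique element of $I \cap \{6,7\}$, hence contractible and never forbidden. If $\{6,7\} \subseteq I$, then $C_I$ is the suspension of $C_{I \cap \{1,\dots,5\}}$, so by the suspension isomorphism in reduced homology, $I$ is forbidden iff $I \cap \{1,\dots,5\}$ is a forbidden subset of the pentagon; this contributes eleven further forbidden sets, one for each forbidden pentagon subset except $\{1,\dots,5\}$, whose suspension would be $J$ itself and therefore not proper.

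Adding up, twelve sets from the first case and eleven from the third reproduce exactly the twenty-three distinct entries of the list (noting that $\{1,3,5,6,7\}$ appears twice in the statement, evidently a typo). The main obstacle is the structural identification of the fan's complex as the join $C_5 \ast S^0$; once this is in place, the remaining classification is a short inspection of induced subgraphs of a pentagon together with a single degree shift for suspensions, and everything else is mechanical bookkeeping.
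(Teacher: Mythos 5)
Your argument is correct and produces exactly the $23$ distinct sets of the statement (you rightly note the duplicated entry $\{1,3,5,6,7\}$ is a typo), but it follows a genuinely different route from the paper. The paper argues cardinality by cardinality: two-element forbidden sets are the primitive collections, three-element ones are unions of two primitive collections, then it verifies by writing out the relevant chain complexes that the four-element (resp.\ five-element) forbidden sets are precisely the complements of the three-element ones (resp.\ of the primitive collections), and finally checks by hand that no six-element set is forbidden. You instead use that $C_I$ depends only on the primitive collections (shared by $\mathcal{E}_1,\mathcal{E}_2,\mathcal{E}_4$) and that the fan's complex is the join of the pentagon on $\{1,\dots,5\}$ with the two-point complex on $\{6,7\}$; then, according to $|I\cap\{6,7\}|$, the complex $C_I$ is an induced subgraph of the $5$-cycle, a cone (never forbidden), or a suspension, and the suspension isomorphism reduces everything to deciding which induced subgraphs of a pentagon have nontrivial reduced homology. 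This is shorter, it explains conceptually the complementation pattern that the paper only observes through computation (adjoining $\{6,7\}$ is suspension), and it handles the absence of six-element forbidden sets with no extra work. Two minor points: your phrase ``each pentagon edge to be capped by exactly one of $v_6$ or $v_7$'' should say that each maximal cone is a pentagon edge together with exactly one of $v_6,v_7$ and that all ten such cones occur (as literally written it would give five maximal simplices, not ten); in fact you do not need the maximal cones at all, since the faces of the complex are exactly the subsets containing no primitive collection, which already yields the join. You are also implicitly using the convention, as the paper and Borisov--Hua do, that the empty complex has nontrivial reduced homology, so that $\emptyset$ and, via suspension, $\{6,7\}$ are forbidden; it is worth making that explicit.
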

\begin{proof}
Certainly the simplicial complex associated to a set with just one element has a trivial reduced homology.
Since the faces have all trivial homology,
the only two-elements forbidden sets are the primitive  collection. We want to show that the forbidden sets of
cardinality three are precisely the
unions of two primitive collections and so they are
$$\{2,4,5\},\;\{2,4,1\},\;\{3,5,1\},\;\{3,5,2\},\;\{1,3,4\}.$$
Indeed in a three element forbidden set can either contain:
\begin{enumerate}
 \item zero primitive collections;
\item one primitive collection;
\item two primitive collections.
\end{enumerate}
If the first is the case, than the set is a maximal cone and hence it has a trivial homology.
If we are in the hypothesis of the second case, then the chain complex associated to the simplicial complex
will look like:
$$0\longrightarrow K^2\longrightarrow K^3\longrightarrow K\longrightarrow 0$$
that is an exact sequence. Thus the simplicial complex has a trivial reduced homology. Finally, if the set $I$
is the union of two primitive
collections, then its associated chain complex will be
$$0\longrightarrow K\longrightarrow K^3\longrightarrow K\longrightarrow 0$$
and  hence it is a forbidden set.\\
\indent\textit{Claim 1}: The four-elements forbidden sets are the complementary of the three elements
forbidden sets. Indeed let us suppose that $I$
is
the complementary of a maximal cone. Since all maximal cone are generated by either $v_6$ o $v_7$, a
complementary of a cone is necessarily of the
following form
$$\{a_1,a_2,b,c\}$$
with $\{a_1,a_2\}$ the only one primitive collection contained in $I$. Thus its associated chain complex is
$$0\longrightarrow K^2\longrightarrow K^5\longrightarrow K^4\longrightarrow K\longrightarrow 0$$
that is obviously exact.\\
Suppose now that $I$ is the complementary of $J$, a set of the form $\{p_1,p_2,q\}$ that contains just one
primitive collection, namely $\{p_1,p_2\}$.
Since all such $I$ count either 6 or 7 among their element, we can consider two different cases:
\begin{enumerate}\item both 6 and 7 are in $I$,
\item $\{6,7\}\not\subseteq I$
\end{enumerate}
 If both $6$ and $7$ are in $J$, then $I$ will contain three primitive collections and its associated chain
complex will be
$$0\longrightarrow K^3\longrightarrow K^4\longrightarrow K\longrightarrow 0$$
that is exact, and hence $I$ is not forbidden. It can happen that just one among 6 and 7 is an element of $J$.
Under this assumption $I$ will contain
three primitive collections and will exist one of its elements that will not belong to any of these. In this
case the chain complex associate to $I$ will
be
$$0\longrightarrow K\longrightarrow K^4\longrightarrow K^4\longrightarrow K\longrightarrow 0$$
that is exact, as required.
To prove the claim it remain to show that the complementary of the forbidden sets of cardinality equal to
three are still forbidden sets. But it can be
easily seen that these sets are of the form $\{a_1,a_2,b_1,b_2\}$ with $\{a_1,a_2\}$ and $\{b_1,b_2\}$ the
only primitive collections in $I$. Hence the
associated chain complex will look like
$$0\longrightarrow K^4\longrightarrow K^4\longrightarrow K\longrightarrow 0$$
and it is obviously not exact.\\
\indent\textit{Claim 2} The forbidden sets of cardinality five are the complementaries of the primitive
collections.\\
Let us suppose that the set $I$ is the complementary of a face, then there are two cases: or $\{6,7\}\subseteq
I$, or just one among 6 and 7 is in
$I$. If we are in the first situation, then it can be seen that $I$ contains two disjoint primitive
collections and its associated chain complex is
$$0\longrightarrow K^4\longrightarrow K^8\longrightarrow K^5\longrightarrow K\longrightarrow 0$$
that is exact. Now assume that not both 6 or 7 are in $I$. Then $I$ will contain three primitive collection
that will cover just four of its five
elements. Its associated chain complex will be
$$0\longrightarrow K^3\longrightarrow K^7\longrightarrow K^5\longrightarrow K\longrightarrow 0.$$
Now suppose, viceversa, that  $I$ is the complementary of a primitive collection $J$. If $J=\{6,7\}$ then the
chain complex associated to  $I$ is
$$0\longrightarrow K^5\longrightarrow K^5\longrightarrow K\longrightarrow 0$$
that is not exact.\\
If, otherwise $J$ is different from $\{6,7\}$, then $I$ contains exactly 3 primitive collections: two of them
cover three elements of the set, while
the last one
is disjoint from the previous. Knowing this data is straightforward to see that the chain complex  associated
to $I$ is
$$0\longrightarrow K^2\longrightarrow K^7\longrightarrow K^5\longrightarrow K\longrightarrow 0.$$
\\

It remains to prove that do not exist forbidden sets of cardinality $6$. Again we can split the proof in two
cases, depending on whether $I$ contains
$\{6,7\}$ or not. In the first case $I$  will contain four primitive collections: $\{6,7\}$ and other three
that will cover the remaining four
elements
of $I$. Knowing this it is easy to check that the chain complex associated to $I$ is
$$0\longrightarrow K^6\longrightarrow K^{11}\longrightarrow K^6\longrightarrow K\longrightarrow0.$$
If otherwise $I$ does not contain $\{6,7\}$, thaen it will contain five primitive relations that will cover
five of its 6 elements. Its associated
chain
complex will  be of the form
$$0\longrightarrow K^5\longrightarrow K^{10}\longrightarrow K^6\longrightarrow K\longrightarrow 0$$
that is again exact, and hence the proof is complete.

\end{proof}

Using the previous result we are now able to prove the next three propositions.
\begin{thm}\label{prop:euno}
 The following is a full strongly exceptional sequence of line bundles for $\euno$.
\begin{gather}\label{eq:sequence
e1}\mathcal{O},\;\mathcal{O}(Z_7),\;\mathcal{O}(Z_4),\;\mathcal{O}(Z_4+Z_7),\;\mathcal{O}(Z_4+Z_5),\;\mathcal{
O}
(Z_1+Z_5+2Z_7),\\ \notag
\mathcal{O}(Z_4+Z_5+Z_7),\;\mathcal{O}(Z_1+Z_4+Z_5+Z_7),\;\mathcal{O}(Z_1+Z_4+Z_5+2Z_7).
\end{gather}
\end{thm}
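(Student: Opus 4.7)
The plan is to imitate the structure of the proofs of Theorems~\ref{prop:duno} and~\ref{prop:ddue}. Fullness of the sequence~\eqref{eq:sequence e1} is already in hand: the preceding proposition identifies its members with the pairwise distinct summands of $(\pi_p)_*(\fas{O}_{\mathcal{E}_1})^{\vee}$, and by Bondal's theorem these summands generate $D^b(\mathcal{E}_1)$. It therefore remains to establish conditions (i)' and (ii)' of Section 4 for the nine line bundles.

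The first step is to fix a basis of $\mathrm{Pic}(\mathcal{E}_1)$ among the toric divisors (for instance $\{Z_1,Z_4,Z_5,Z_7\}$, using the linear equivalences $Z_2\sim -Z_1+Z_4-Z_7$, $Z_3\sim Z_1+Z_5+Z_7$ and $Z_6\sim Z_7$ that follow from the ray coordinates chosen in the previous proposition) and to rewrite every difference $\mathcal{L}_\beta\otimes\mathcal{L}_\alpha^{-1}$ in this basis. The resulting finite list of divisors is closed under sign change, and many of its members admit a representative $\sum_\rho a_\rho Z_\rho$ with each $a_\rho\in\{0,1\}$: those are acyclic by Remark~\ref{mustata}. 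The remaining divisors require the Borisov--Hua criterion and must be tested against the 24 forbidden sets catalogued in the preceding proposition.

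For that stage I would proceed exactly as in the $\mathcal{D}_i$ arguments: read off one or two numerical invariants from the coordinates of each divisor (something playing the role that $z_6$ and $z_4-z_5$ played in the proof of Theorem~\ref{prop:duno}, for instance the coefficient of $Z_7$ together with a linear combination involving $Z_1$), and observe that the forbidden forms~\eqref{forbidden} relative to most of the 24 sets $I$ force one of those invariants to exceed a bound that none of our divisors satisfies. A handful of uniform inequalities should eliminate the majority of the $(I,D)$ combinations; the few surviving candidates can then be inspected individually and ruled out by hand.

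Condition (ii)' is the easier half: for each ordered pair $\alpha>\beta$ in~\eqref{eq:sequence e1}, I would exhibit one coefficient of $\mathcal{L}_\beta\otimes\mathcal{L}_\alpha^{-1}$ (in the chosen basis) whose sign forces non-effectiveness, hence the absence of global sections by \cite[Corollary 2.8]{toricfiber}. The principal obstacle is purely combinatorial bookkeeping: $\mathcal{E}_1$ has Picard rank~$4$ and substantially more forbidden sets than the $\mathcal{D}_i$, so the case analysis is longer and splits into more groups; however no ingredient beyond Remark~\ref{mustata} and the Borisov--Hua criterion is needed, and the argument can be organized exactly along the groups a)--d) pattern already used for $\mathcal{D}_1$ and $\mathcal{D}_2$.
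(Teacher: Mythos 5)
Your proposal follows the paper's own proof essentially verbatim: the paper uses exactly the basis $\{Z_1,Z_4,Z_5,Z_7\}$ of $\mathrm{Pic}(\euno)$ with the relations $Z_2\sim -Z_1+Z_4-Z_7$, $Z_3\sim Z_1+Z_5+Z_7$, $Z_6\sim Z_7$, gets fullness from Bondal's method, disposes of the differences admitting representatives with coefficients in $\{0,1\}$ via Remark \ref{mustata}, eliminates all but a handful of the forbidden sets (namely $\{2,3,5\}$, $\{1,3,4\}$, $\{1,3,4,6,7\}$, $\{2,3,5,6,7\}$, $\{1,2,3,4,5\}$) through coefficient bounds on $z_1,z_4,z_5,z_7$ before treating those individually, and concludes condition (ii)' by showing the relevant differences are not linearly equivalent to toric effective divisors. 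The only caveat is that the finite case-by-case verification you label as bookkeeping is the entire substance of the paper's argument, so as written your text is a correct plan rather than a completed proof, though it contains no wrong step or missing idea.
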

\begin{proof}
 We already know, by Bondal's method, that this sequence of line bundles generates the bounded derived
category of $\euno$. \\
Our first step will be to check if the following line bundles satisfy the required acyclicity.
\begin{gather}\label{eq:acyclic e1}
\odiv{Z_7},\;\odiv{Z_4},\;\odiv{Z_4+Z_7},\;\odiv{Z_4+Z_5}\odiv{Z_4+Z_5},\\
\notag\;\odiv{Z_1+Z_5+2Z_7},\;\odiv{Z_4+Z_5+Z_7},\;\odiv{Z_1+Z_4+Z_5+Z_7},\\
\notag\;\odiv{Z_1+Z_4+Z_5+2Z_7},\;\odiv{Z_4-Z_7},\;\odiv{Z_4+Z_5-Z_7},\\\notag\;\odiv{Z_1-Z_5+Z_7},\;\odiv{
Z_1+Z_4+Z_5},\;\odiv{Z_5},\;\odiv{
Z_1-Z_4+Z_5+2Z_7},\\ \notag \odiv{Z_5+Z_7},\; \odiv{Z_5-Z_7},\;\odiv{Z_1-Z_4+Z_5+Z_7},\;\odiv{Z_5-Z_7},\\
\notag
\odiv{Z_1-Z_4+2Z-7},\;\odiv{Z_1+Z_7},\;\odiv{Z_1+2Z_7},\;\odiv{-Z_1+Z_4-Z_7},\;\\ \notag\odiv{Z_1}.
\end{gather}
As before, in order to easy the computation, we divide these line bundles in groups:
\begin{enumerate}\item[a)]
$\mathcal{O}(Z_7),$\;$\mathcal{O}(Z_4),$\;$\mathcal{O}(Z_4+Z_7),$\;$\mathcal{O}(Z_4+Z_5),$\;$\mathcal{O}
(Z_4+Z_5+Z_7)$,\linebreak
$\mathcal{O}(Z_1+Z_5+2Z_7),$\;$\mathcal{O}(Z_1+Z_4+Z_5+Z_7),$\;$\mathcal{O}(Z_1+Z_4+Z_5+2Z_7)$,\linebreak
$\mathcal{O}(Z_4-Z_7)$, $\mathcal{O}(Z_4+Z_5-Z_7)$, $\mathcal{O}(Z_1+Z_5)$, $\mathcal{O}(Z_1+Z_4+Z_5)$,
$\mathcal{O}(Z_5)$,\linebreak
$\mathcal{O}(Z_5+Z_7)$,
$\mathcal{O}(Z_1+Z_5+2Z_7)$, $\mathcal{O}(-Z_1+Z_4-Z_7)$, $\mathcal{O}(Z_1+2Z_7)$,\linebreak
$\mathcal{O}(-Z_1+Z_4)$, $\mathcal{O}(-Z_1+Z_4-Z_7)$,
$\mathcal{O}(Z_1),\;\fas{O}(Z_1+Z_5+Z_7),\;\fas{O}(Z_1+Z_7)$;
\item[b)] $\mathcal{O}(-Z_4-Z_5)$, $\mathcal{O}(-Z_4-Z_5-Z_7)$, $\mathcal{O}(-Z_1-Z_5-Z_7)$,
$\mathcal{O}(-Z_1-Z_5-2Z_7)$,
$\mathcal{O}(-Z_1-Z_4-Z_5-Z_7)$, $\mathcal{O}(-Z_1-Z_4-Z_5-2Z_7)$, $\mathcal{O}(-Z_4-Z_5+Z_7)$,
$\mathcal{O}(-Z_1-Z_5)$, $\mathcal{O}(-Z_1-Z_4-Z_5)$,
$\mathcal{O}(-Z_5)$,  $\mathcal{O}(-Z_1+Z_4-Z_5-2Z_7)$,\linebreak $\mathcal{O}(-Z_5-Z_7)$,
$\mathcal{O}(-Z_1+Z_4-Z_5-Z_7)$, $\mathcal{O}(-Z_5+Z_7)$;
\item[c)] $\mathcal{O}(-Z_4+Z_7)$, $\mathcal{O}(-Z_4)$, $\mathcal{O}(Z_1-Z_4+Z_7)$,
$\mathcal{O}(-Z_1-Z_7)$,\linebreak $\mathcal{O}(Z_1-Z_4)$,
$\mathcal{O}(-Z_4-Z_7)$, $\mathcal{O}(-Z_1)$, $\fas{O}(Z_1-Z_4+2Z_7)$,  $\fas{O}(-Z_7)$;  
\item[d)] $\mathcal{O}(Z_1-Z_4+Z_5+Z_7)$,  $\mathcal{O}(Z_1-Z_4+Z_5+2Z_7)$, 
$\mathcal{O}(Z_1-Z_4+Z_5)$,\linebreak $\fas{O}(-Z_1-2Z_7)$;
\item[e)] $\mathcal{O}(-Z_1+Z_4-2Z_7)$; 
\item[f)] $\mathcal{O}(Z_5-Z_7)$.  
\end{enumerate}
Given a divisor $D=z_1Z_1+z_4Z_4+z_5Z_5+z_7Z_7$ on $\euno$, written in the basis\linebreak $(Z_1,Z_4,Z_5,Z_7)$
of $\mathrm{Pic}(\euno)$ we want to
find
conditions for another divisor\linebreak $D'=\sum_{\rho=1}^7a_\rho Z_\rho$ to be linear equivalent to $D$. In
order to do that we write $D'$ in the
given basis
using the following relations among the principal toric divisors:
$$ Z_2=-Z_1+Z_4-Z_7,\quad Z_3=Z_1+Z_5+Z_7,$$
$$Z_6=Z_7.$$
We gain
\begin{equation}\label{eq:divisor e1}
D'\simeq_{\mathrm{lin}} (a_1-a_2+a_3)Z_1 +(a_2+a_4)Z_4+ (a_5+a_3)Z_5+(-a_2+a_3+a_6+a_7)Z_7.
\end{equation}
Now, imposing equality among the coefficient of $D'$ and $D$ we have the conditions we were seeking.\\
Using \eqref{eq:divisor e1}, it can easily be seen that all the line bundles in group a) are line bundles
associated to  toric effective divisors with coefficient $a_i\in\{1,0\}$, and
hence are acyclic. In order to check the acyclicity of the other line bundles we are going to use
Borisov-Hua's criterion. Observe that:
\begin{enumerate}
 \item The forbidden forms relative to a set $I$ which does not contain neither 2 nor 4 as elements have the
coefficient $z_4\leq -2$.
\item The forbidden forms relative to a set $I$ which does not contain neither 3 nor 5 satisfy $z_5\leq -2$.
\item The forbidden forms relative to a set $I$ such that $2\in I$, but neither 1, nor 3 are in $I$ satisfy
$z_1\leq-2$.
\item The forbidden forms relative to a set $I$ with $2\in I$ and such that its complementary $I'$ contains
$\{3,6,7\}$ as a subset have $z_7\leq
-3$.
\end{enumerate}
Thus we can eliminate all the forbidden sets satisfying condition 1)-4). The set we have still to check are:
$$\{2,3,5\},\;\{1,3,4\}\;\{1,3,4,6,7\},\;\{2,3,5,6,7\},\;\{1,2,3,4,5\}.$$
We can eliminate $\{1,3,4\}$ and $\{1,3,4,6,7\}$ in the following way: suppose the some of the line bundles we
are working with can be put in one of
the forbidden forms relative to these sets. Since both  sets $\{1,3,4\}$ and $\{1,3,4,6,7\}$ contains 1 and 3
but not 2 it follows that
$z_1=a_1-a_2+a_3\geq 1$. The only possibility for our divisors is $z_1=1$ and hence $a_1=a_3=0$ and $a_2=-1$.
But $5\notin\{1,3,4\}\cup\{1,3,4,6,7\}$,
thus $a_3=0$ would imply that $z_5<0$. Hence these line bundles should be in group b). But all the divisors in
group b) have a non positive $z_1$.\\
Now we will show that none among the divisors in b)-f) can be put in a forbidden form relative to
$I=\{2,3,5\}$. Surely, since both $a_3$ and $a_5$
are
positive, none of the divisors in b) (which have a negative $z_5$) is in the forbidden form relative to $I$.
If we have a line bundle associated to a
divisor $D$ whose coefficient $z_5$ is null, then it can be put in one of the forbidden form relative to $I$
if $a_3=a_5=0$. As a consequence
$z_7=-a_2+a_3+a_6+a_7\leq-2$. Thus the line bundles in c) (which have $z_5=0$ and $z_7\geq-1$) cannot be put
in the required forbidden form. Let us
see that neither one of the line bundles in e) can be of the forbidden form relative to $I$: if this would be
the case, then we will have that $a_2=0$
and,
since $4\notin I$, $z_4\leq-1$ that is impossible. Now we want to show that the line bundles in d) and f) are
not in the forbidden form relative to
$I$. In this case $a_3$ can be both 0 or -1. In any case we will have that $z_7\leq -1$, and this is
sufficient to eliminate all the bundles in d). As
before we eliminate the invertible sheaf in e) because its $z_4$ is not negative.\\
In order to eliminate the set $I=\{1,2,3,4,5\}$ we proceed in a similar way. Observe now that again $a_3$ can
be $0$ or $1$.
If $a_3$ is 0, then $z_7\leq-2$ and $z_1\geq 0$ that is impossible, because the only invertible sheaf with
$z_7=-2$ is the one in e) and have
$z_1=-1$. Then $a_3=1$. But in this case we have $z_5\geq1$, $z_7\leq-1$ and $z_1\geq 1$ that is again
impossible.
It remains to check that none of the divisors in the list can be put in the forbidden form relative to
$I=\{2,3,5,6,7\}$. Again $a_3=0,1$. If $a_3=0$,
then $z_1\leq -1$ it follows that $z_1=-1 $ and $a_2=0$. As a consequence $z_4\leq-1$ but none of the divisor
we have has both a negative $z_1$ and a
negative $z_4$. Now suppose that $a_3=1$. Then $z_5=1$ and $z_1\leq 0$. All the line bundles with a positive
$z_5$ (group d) and f)) have a
non-negative $z_1$. It follows that $z_1=0$ and, in particular $z_4\leq -1$ but this is impossible.\\
Finally, to finish proving the proposition, we have to check that the following divisors are not linearly
equivalent to any toric effective divisor.
\begin{enumerate}
 \item[a)] $-Z_4$, $-Z_4-Z_7$, $-Z_4-Z_5$, $-Z_4-Z_5-Z_7$, $-Z_1-Z_5-2Z_7$,\linebreak $-Z_1-Z_4-Z_5-Z_7$,
$-Z_1-Z_4-Z_5-2Z_7$, $-Z_4+Z_7$,
$-Z_4-Z_5+Z_7$,
$-Z_1-Z_5+Z_7$, $-Z_1-Z_4-Z_5$, $-Z_5$, $-Z_5-Z_7$, $-Z_1+Z_4-Z_5-2Z_7$, \linebreak
$-Z_1-Z_5-Z_7$, $-Z_5+Z_7$, $-Z_1+Z_4-Z_5-Z_7$, $-Z_1-Z_5$, $Z_1-Z_4+Z_7$;
\item[b)] $-Z_7$, $-Z_1+Z_4-2Z_7$, $-Z_1-Z_7$, $-Z_1-2Z_7$, $-Z_1$.
\end{enumerate}
Observe that the divisors in a), which have a negative $z_4$ or a negative $z_5$ cannot possibly be linear
equivalent to a toric effective divisor.
For what it concerns group b), it can be deduced from \eqref{eq:divisor e1} that it is a necessary condition
in order for a divisor $D$ to be linearly
equivalent to a toric effective divisor that $z_1\geq-a_2\geq-z_4$ and $z_7\geq-a_2\geq-z_4$. It is easy to
see that none of the divisors in group b)
satisfies this condition.
\end{proof}

\begin{thm}\label{prop:edue}
 The following is a full strongly exceptional sequence for $\edue$:
$$\mathcal{O},\;\mathcal{O}(Z_7),\;\mathcal{O}(Z_4),\;\mathcal{O}(Z_1+Z_5),\;\mathcal{O}(Z_1+Z_5+Z_7),
\;\mathcal{O}(Z_4+Z_7),\;\mathcal{O}(Z_4+Z_5),
$$
$$
\mathcal{O}(Z_4+Z_5+Z_7),\;\mathcal{O}(Z_1+Z_4+Z_5),\;\mathcal{O}(Z_1+Z_4+Z_5+Z_7).
$$
\end{thm}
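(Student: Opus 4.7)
The plan is to follow the blueprint of Theorem \ref{prop:euno} essentially verbatim. Fullness of the sequence is immediate from the preceding proposition, since those ten line bundles are the direct summands of $(\pi_p)_*(\mathcal{O}_{\edue})^\vee$ and therefore generate $D^b(\edue)$ by Bondal's theorem; moreover $\mathrm{rk}(K_0(\edue))=10$, so no item must be discarded. What remains is to verify the two vanishing conditions (i)' and (ii)' from the start of the Vanishing Theorems section.

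First I would enumerate every quotient $\mathcal{L}_\beta\otimes\mathcal{L}_\alpha^{-1}$ arising from the unordered pairs of line bundles in the sequence, together with its inverse, and collect the distinct ones. Using the primitive relations of $\edue$ to express the remaining principal toric divisors in terms of the basis $(Z_1,Z_4,Z_5,Z_7)$ of $\mathrm{Pic}(\edue)$, I would then write a generic toric divisor $D'=\sum_{\rho}a_\rho Z_\rho$ as
$$D'\simeq_{\mathrm{lin}} z_1Z_1+z_4Z_4+z_5Z_5+z_7Z_7$$
with explicit linear forms $z_i(a)$, the analogue of formula \eqref{eq:divisor e1}. These formulas are the main tool for detecting whether a divisor can be put in a Borisov--Hua forbidden form.

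For acyclicity I would split the candidates into groups, just as in the proof of Theorem \ref{prop:euno}. One group consists of sheaves $\mathcal{O}(E)$ in which $E$ is already a $\{0,1\}$-combination of the principal toric divisors; these are acyclic by Remark \ref{mustata}. For the remaining sheaves I would invoke the forbidden sets of $\edue$: being forbidden depends only on the primitive collections, which are common to $\mathcal{E}_1,\edue,\equat$, so the list established in the preceding proposition applies unchanged. I would then exclude each forbidden set by a coefficient inequality: if $I$ omits a particular pair of indices, the forbidden forms relative to $I$ force one of the $z_i$ to be so negative that no difference from our sequence can match. This rules out most forbidden sets at a single stroke; the few recalcitrant ones (paralleling $\{2,3,5\}$, $\{1,3,4\}$, $\{1,3,4,6,7\}$, $\{2,3,5,6,7\}$ and $\{1,2,3,4,5\}$ in the $\euno$ argument) require a finer two-coefficient argument identical in spirit.

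Condition (ii)' is then straightforward: the remaining negative differences are either opposites of toric effective divisors, hence trivially without sections, or have some $z_i<0$ in a position that cannot be absorbed by any nonnegative choice of the $a_\rho$ under the constraints imposed by the basis formulas. I expect the main obstacle to be sheer bookkeeping rather than any conceptual difficulty: the list of differences is large and each must be placed correctly within the grouping and checked against every surviving forbidden set. Once the coefficient formulas $z_i(a)$ are in hand, however, every individual verification reduces to a one- or two-variable linear inequality, and the theorem follows.
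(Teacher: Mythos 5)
Your proposal follows exactly the paper's own argument: fullness comes for free from the Frobenius-splitting proposition (ten summands, $\mathrm{rk}\,K_0(\edue)=10$), acyclicity of the differences is checked by combining Remark \ref{mustata} with the Borisov--Hua criterion using the forbidden sets shared by $\euno$, $\edue$, $\equat$ (the paper reduces to $\{1,3,4\}$, $\{1,3,4,6,7\}$, $\{2,3,5,6,7\}$ via the coefficient formulas in the basis $Z_1,Z_4,Z_5,Z_7$), and condition (ii)' is settled by the same non-effectivity bookkeeping. The only thing separating your outline from the paper's proof is carrying out this explicit casework, which proceeds exactly as you describe.
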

\begin{proof}
 We already know that the aforementioned sequence is full. Thus we have just to show that it is strongly
exceptional. First of all we  will  prove the
vanishing of the higher  cohomology of the following line bundles:
\begin{gather}
 \odiv{Z_7},\;\odiv{Z_4},\;\odiv{Z_1+Z_5},\;\odiv{Z_1+Z_5+Z_7},\;\odiv{Z_4+Z_7},\;\\
\notag\odiv{Z_4+Z_5},\;\odiv{Z_4+Z_5+Z_7},\;\odiv{Z_1+Z_4+Z_5},\;\\\notag\odiv{Z_1+Z_4+Z_5+Z_7},\;\odiv{
Z_4-Z_7},\;\odiv{Z_1+Z_5-Z_7},\;\\\notag\odiv{
Z_4+Z_5-
Z_7},\;\odiv{Z_1+Z_4+Z_5-Z_7},\;\odiv{Z_1+Z_5-Z_4},\\\notag\odiv{Z_5},\;\odiv{Z_5+Z_7},\;\odiv{Z_1-Z_4+Z_5-Z_7
},\;\odiv{Z_4-Z_1},\\\notag\odiv{
Z_1-Z_4-Z_7},\;
\odiv{Z_4+Z_7},\;\odiv{Z_1-Z_4+Z_7},\;\odiv{Z_5-Z_7},\\
\notag\odiv{Z_1+Z_7},\; \odiv{Z_1-Z_7},\;\odiv{Z_1},\;\mathcal{O}(\pm(-Z_1+Z_4-Z_5+Z_7)).
\end{gather}
As usual, it is better to split all these invertible sheaves into four groups.
\begin{itemize}
 \item[a)]
$\mathcal{O},\;\mathcal{O}(Z_7),\;\mathcal{O}(Z_4),\;\mathcal{O}(Z_1+Z_5),\;\mathcal{O}(Z_1+Z_5+Z_7),
\;\mathcal{O}(Z_4+Z_7),$\linebreak
$\mathcal{O}(Z_4+Z_5),$ $
\mathcal{O}(Z_4+Z_5+Z_7),\;\mathcal{O}(Z_1+Z_4+Z_5),\;\mathcal{O}(Z_1+Z_4+Z_5+Z_7),$ $\mathcal{O}(Z_5)$,
$\mathcal{O}(Z_5+Z_7)$,
$\mathcal{O}(Z_4+Z_7)$, $\mathcal{O}(Z_1)$, $\mathcal{O}(Z_1+Z_7)$, $\mathcal{O}(Z_4-Z_7)$,\linebreak
$\mathcal{O}(-Z_1+Z_4)$,
$\mathcal{O}(-Z_1+Z_4-Z_7)$,
$\mathcal{O}(-Z_1+Z_4+Z_7)$;
\item[b)] $\mathcal{O}(-Z_1+Z_4-Z_5)$, $\mathcal{O}(-Z_1+Z_4-Z_5+Z_7)$,
$\mathcal{O}(-Z_1+Z_4-Z_5-Z_7)$,\linebreak $\mathcal{O}(-Z_5)$,
$\mathcal{O}(-Z_1-Z_5)$,
$\mathcal{O}(-Z_1-Z_5-Z_7)$, $\mathcal{O}(-Z_1-Z_5+Z_7)$,\linebreak $\mathcal{O}(-Z_5-Z_7)$,
$\mathcal{O}(-Z_5+Z_7)$,  $\mathcal{O}(-Z_4-Z_5)$,
$\mathcal{O}(-Z_4-Z_5-Z_7)$,\linebreak $\mathcal{O}(-Z_1-Z_4-Z_5)$, $\mathcal{O}(-Z_1-Z_4-Z_5-Z_7)$,
$\mathcal{O}(-Z_4-Z_5+Z_7)$,\linebreak
$\mathcal{O}(-Z_1-Z_4-Z_5+Z_7)$;
\item[c)] $\mathcal{O}(+Z_1+Z_4+Z_5-Z_7)$, $\mathcal{O}(Z_4+Z_5-Z_7)$, $\mathcal{O}(Z_5-Z_7)$,
$\mathcal{O}(Z_1+Z_5-Z_7)$,
$\mathcal{O}(Z_1-Z_4+Z_5)$, $\mathcal{O}(Z_1-Z_4+Z_5+Z_7)$, $\mathcal{O}(Z_1-Z_4+Z_5-Z_7)$;
\item[d)] $\mathcal{O}(-Z_7)$, $\mathcal{O}(-Z_1-Z_7)$, $\mathcal{O}(-Z_1)$, $\mathcal{O}(Z_1-Z_7)$,
$\mathcal{O}(-Z_1+Z_7)$, $\mathcal{O}(-Z_4)$,
$\mathcal{O}(-Z_4-Z_7)$, $\mathcal{O}(-Z_4+Z_7)$, $\mathcal{O}(Z_1-Z_4)$,
$\mathcal{O}(Z_1-Z_4-Z_7)$,\linebreak $\mathcal{O}(Z_1-Z_4+Z_7)$.
$\mathcal{O}(-Z_1-Z_5+Z_7)$, $\mathcal{O}(-Z_5-Z_7)$,
$\mathcal{O}(Z_1-Z_4-Z_7)$,
$\mathcal{O}(-Z_1-Z_4-Z_5-Z_7)$,

$\mathcal{O}(Z_1-Z_4+Z_5-Z_7)$.
\end{itemize}
Let $D=\sum_{\rho =1}^7 a_\rho Z_\rho$ be any divisor on $\edue$. We want to write it in the basis given by
$Z_1,\;Z_4,\;Z_5$ and $Z_7$ using the
following relations:
$$Z_2=-Z_1+Z_4-Z_7,\quad Z_3=Z_1+Z_5,\quad Z_6=Z_7.$$
We get
\begin{equation}\label{eq:basis
e2}D\simeq_{\mathrm{lin}}(a_1+a_3-a_2)Z_1+(a_3+a_5)Z_5+(a_2+a_4)Z_4+(a_6+a_7-a_2)Z_7.\end{equation}
We indicate with $(z_1,z_4,z_5,z_7)$ the coordinate of $D$ in the chosen basis.\\
We can easily see that all the line bundles in a) can be associated to a toric effective divisor with the
coefficents $a_i\in\{1,0\}$  (and hence acyclic due to Remark \ref{mustata}). For
example$$-Z_1+Z_4+Z_7\simeq_{\mathrm{lin}} Z_2+Z_6+Z_7.$$
For all the other divisors we need Borisov-Hua criterion.\\
It is easy to see that the remaining line bundles cannot be forbidden with respect to a set $I$ unless $I$ is
among the following three sets:
 $$\{1,3,4\},\;\{1,3,4,6,7\},\;\{2,3,5,6,7\}.$$
First of all we eliminate $\{1,3,4\},\;\{1,3,4,6,7\}$: in both these sets appear indeces 1 and 3 and does not
appear 2. This means that coefficient
$z_1$ is greater or equal to 1. The only possibility is that $z_1=1$, hence $a_1=a_3=0$ and $a_2=-1$. Since
neither $5$ is in the previous sets, then
$-1\leq z_5=a_3+a_5\leq -1$. The divisors with $z_5=-1$ are the ones of group b). You can observe that neither
one of these has a positive $z_1$.\\
It remains to eliminate $I=\{2,3,5,6,7\}$.\\
All the divisor of the group b) cannot be of the forbidden form relative to this set. In fact all this
divisors have $z_5=-1$ and the forbidden
divisors relative to $I$ have $z_5\geq 0$. Neither the divisor of group d), that have $z_5=0$ cannot be of the
forbidden forms relative to $I$.
Indeed, if we impose to the forbidden forms the condition to have $z_5=0$ we get that, necessarily,
$a_3=a_5=0$. Thus $a_1=z_1+a_2\geq z_1$. Since
$a_1\leq -1$ we get that $a_1=z_1=-1$. But, on the other side we have that $z_4=a_2+a_4\leq -1$. None of the
divisor of group d) have both $z_1$ and
$z_4$ negative. Finally we can see that also the divisors of group c) are not in of the forbidden forms
relative to $I$: since both $a_3$ and $a_5$
are non negative, we have that $a_3\leq 1$. It follows that $z_1=a_1+a_3-a_2\leq 0$. But all the divisor in c)
have a non negative $z_1$ coefficient,
thus there is just one possibility: $z_1=0$. This yields that $a_1=-1$, $a_3=1$ and  $a_2=0$. The last
equality implies that $z_4=a_4\leq-1$, but this
cannot be since all the  divisors in c)  who have $z_1=0$ have a non negative $z_4$ too.\\
\indent Now, to prove the statement, we just need to show that the following divisors are not linearly
equivalent to a toric effective divisor.
\begin{itemize}
 \item[a)]$-Z_4$, $-Z_4-Z_7$, $-Z_4-Z_5$, $-Z_4-Z_5-Z_7,$ $-Z_1-Z_4-Z_5,$\linebreak $-Z_1-Z_4-Z_5-Z_7,$
$-Z_4+Z_7,$ $-Z_4-Z_5+Z_7,$
$-Z_1-Z_4-Z_5+Z_7,$\linebreak
$Z_1-Z_4+Z_5-Z_7$, $Z_1-Z_4$, $Z_1-Z_4-Z_7$, $-Z_1-Z_4$, $Z_1+Z_5-Z_4$,\linebreak $Z_1+Z_7-Z_4$, $-Z_4+Z_7$;
\item[b)] $-Z_1-Z_5$, $-Z_1-Z_5-Z_7,$ $-Z_1-Z_5+Z_7,$ $-Z_1+Z_4-Z_5,$ $-Z_5,$ $-Z_5-Z_7$, $-Z_1-Z_5-Z_7$,
$-Z_5+Z_7$;
\item[c)] $-Z_7,$ $-Z_1$, $-Z_1-Z_7$, $Z_7-Z_1$.
\end{itemize}
Looking at \eqref{eq:basis e2} it is obvious that the divisors in a) and b), which have or $z_4=-1$ or
$z_5=-1$ cannot be toric effective. The
divisors in the last group have $z_4=0$ and or $z_1$ or $z_7$ equal to -1. If they were equivalent to a toric
effective divisor, then $a_2>0$ but this
is impossible since we would have $z_4=a_2+a_4>0$.
\end{proof}
\begin{thm}\label{prop:equat}
 The following is a full strongly exceptional sequence for $\equat$:
$$\mathcal{O},\;\mathcal{O}(Z_7),\;\mathcal{O}(Z_4),\;\mathcal{O}(Z_1+Z_5),\;\mathcal{O}(Z_1+Z_5+Z_7),
\;\mathcal{O}(Z_4+Z_7),\;\mathcal{O}(Z_4+Z_5),
$$
$$
\mathcal{O}(Z_4+Z_5+Z_7),\;\mathcal{O}(Z_1+Z_4+Z_5),\;\mathcal{O}(Z_1+Z_4+Z_5+Z_7).
$$
\end{thm}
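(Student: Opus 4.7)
The proof will mirror those of Theorems \ref{prop:euno} and \ref{prop:edue}. The fullness of the sequence is already established by the proposition computing the summands of $(\pi_p)_*(\fas{O}_{\equat})^\vee$: every line bundle listed in the statement appears (up to tensoring by a fixed line bundle) among those summands, which generate $D^b(\equat)$ by Bondal's result. So everything reduces to verifying the strong exceptionality, i.e.\ conditions (i)' and (ii)' for each difference $\fas{L}_\beta\otimes\fas{L}_\alpha^{-1}$.

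The first step is to collect the relations among the principal toric divisors of $\equat$. Using the primitive relations listed before the proposition computing $(\pi_p)_*(\fas{O}_{\equat})^\vee$ (in particular $v_6+v_7=v_3$, which is the only relation differing from the $\euno$ case), together with the basis $(v_2,v_3,v_6)$ of $N$, one obtains the linear equivalences
\[ Z_2\sim Z_4-Z_1,\qquad Z_3\sim Z_1+Z_5-Z_7,\qquad Z_6\sim Z_7.\]
Plugging these into a generic divisor $D=\sum_{\rho=1}^{7}a_\rho Z_\rho$, I express $D$ in the basis $(Z_1,Z_4,Z_5,Z_7)$ of $\Pic(\equat)$ and record the four coefficients $(z_1,z_4,z_5,z_7)$ as explicit affine functions of the $a_\rho$'s. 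This formula is the only arithmetic input needed for the rest of the proof.

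Next I would list all the differences $\fas{L}_\beta\otimes\fas{L}_\alpha^{-1}$ that must be acyclic, and partition them into four groups exactly as in the proof of Theorem \ref{prop:edue}. Group (a) consists of those line bundles which are isomorphic, via the relations above, to $\fas{O}(\sum\varepsilon_i Z_i)$ with all $\varepsilon_i\in\{0,1\}$; their acyclicity is immediate from Remark \ref{mustata}. For the remaining three groups I apply the Borisov–Hua criterion. Since $\equat$ shares its primitive collections with $\euno$ and $\edue$, its forbidden sets are the ones listed in the proposition preceding Theorem \ref{prop:euno}. Using the sign constraints on $(z_1,z_4,z_5,z_7)$ coming from the forbidden-form description, essentially all forbidden sets are eliminated in one stroke by the obvious inequalities (for instance, if neither $2$ nor $4$ lies in $I$ then $z_4\leq -2$, and analogously for $Z_5$ and $Z_1$). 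This leaves only the sets $\{1,3,4\}$, $\{1,3,4,6,7\}$ and $\{2,3,5,6,7\}$ as candidates, and these are handled by a short case-by-case argument tracking the residual constraints on the $a_\rho$'s; the argument is identical in spirit to the one already carried out for $\edue$, with minor adjustments coming from the slightly different formula for $z_7$.

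Finally, to verify (ii)' I isolate the differences appearing in the reversed order and check that they are not linearly equivalent to any toric effective divisor. Using the formula for $(z_1,z_4,z_5,z_7)$, any divisor with a strictly negative $z_4$ or $z_5$ cannot be toric effective; the few remaining divisors (those with only $z_1$ or $z_7$ negative) are ruled out by observing that toric effectiveness would force $a_2+a_4\geq 0$ and $a_3+a_5\geq 0$, which contradicts the negativity of $z_1$ or $z_7$. The main obstacle is really just bookkeeping: the number of differences and forbidden sets is large, but the truly delicate part is ensuring the group (a) list is complete, since whether a given difference falls into the ``trivially acyclic'' group depends on the specific divisor relations of $\equat$, which differ from those of $\euno$ and $\edue$. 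Once this partition is fixed correctly, the rest of the verification reduces to the routine sign analysis sketched above.
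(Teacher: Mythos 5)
Your proposal follows essentially the same route as the paper's proof: fullness from the Frobenius/Bondal computation, the relations $Z_2\sim Z_4-Z_1$, $Z_3\sim Z_1+Z_5-Z_7$, $Z_6\sim Z_7$ giving the coordinates $(z_1,z_4,z_5,z_7)=(a_1+a_3-a_2,\,a_2+a_4,\,a_3+a_5,\,a_6+a_7-a_3)$, Remark \ref{mustata} for the ``0--1'' differences, Borisov--Hua sign analysis for the rest, and non-effectiveness for the Hom vanishing. The only cosmetic difference is that the paper's sign analysis for $\equat$ eliminates all forbidden sets except $\{1,3,4,6,7\}$ in one stroke (rather than leaving the three candidates you carry over from $\edue$), and for the last non-effectiveness cases the negativity of $z_7$ is contradicted via $a_3>0\Rightarrow z_5>0$ (and $z_1<0$ via $a_2>0\Rightarrow z_4>0$), which is the precise form of the constraint you sketch.
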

\begin{proof}
 We already know that the aforementioned sequence is full. Thus we have just to show that it is strongly
exceptional. First of all we  will  prove the
acyclicity of the following line bundles:
\begin{gather}
 \odiv{Z_7},\;\odiv{Z_4},\;\odiv{Z_1+Z_5},\;\odiv{Z_1+Z_5+Z_7},\;\odiv{Z_4+Z_7},\;\\
\notag\odiv{Z_4+Z_5},\;\odiv{Z_4+Z_5+Z_7},\;\odiv{Z_1+Z_4+Z_5},\;\\\notag\odiv{Z_1+Z_4+Z_5+Z_7},\;\odiv{
Z_4-Z_7},\;\odiv{Z_1+Z_5-Z_7},\;\\\notag\odiv{
Z_4+Z_5-
Z_7},\;\odiv{Z_1+Z_4+Z_5-Z_7},\;\odiv{Z_1+Z_5-Z_4},\;\\\notag\odiv{Z_5},\;\odiv{Z_5+Z_7},\;\odiv{
Z_1-Z_4+Z_5-Z_7},\;\odiv{Z_4-Z_1},\;
\\\notag\odiv{Z_1-Z_4-Z_7
},\;\odiv{Z_4+Z_7},\;\odiv{Z_1-Z_4+Z_7},\;\odiv{Z_5-Z_7},\;\\
\notag\odiv{Z_1+Z_7},\; \odiv{Z_1-Z_7},\;\odiv{Z_1},\;\mathcal{O}(\pm(-Z_1+Z_4-Z_5+Z_7)).
\end{gather}
As in the previous case we shall split all the line bundles above into four groups.
\begin{itemize}
\item[a)]
$\mathcal{O},\;\mathcal{O}(Z_7),\;\mathcal{O}(Z_4),\;\mathcal{O}(Z_1+Z_5),\;\mathcal{O}(Z_1+Z_5+Z_7),
\;\mathcal{O}(Z_4+Z_7),$\linebreak
$\mathcal{O}(Z_4+Z_5),$ $
\mathcal{O}(Z_4+Z_5+Z_7),\;\mathcal{O}(Z_1+Z_4+Z_5),\;\mathcal{O}(Z_1+Z_4+Z_5+Z_7),$ $\mathcal{O}(Z_5)$,
$\mathcal{O}(Z_5+Z_7)$,
$\mathcal{O}(Z_4+Z_7)$, $\mathcal{O}(Z_1)$, $\mathcal{O}(Z_1+Z_7)$, $\mathcal{O}(-Z_1+Z_4)$,\linebreak 
$\mathcal{O}(-Z_1+Z_4+Z_7)$
$\mathcal{O}(+Z_1+Z_4+Z_5-Z_7)$, $\mathcal{O}(Z_4+Z_5-Z_7)$, $\mathcal{O}(Z_5-Z_7)$,
$\mathcal{O}(Z_1+Z_5-Z_7)$;
\item[b)] $\mathcal{O}(-Z_1+Z_4-Z_5)$, $\mathcal{O}(-Z_1+Z_4-Z_5+Z_7)$,
$\mathcal{O}(-Z_1+Z_4-Z_5-Z_7)$,\linebreak $\mathcal{O}(-Z_5)$,
$\mathcal{O}(-Z_1-Z_5)$,
$\mathcal{O}(-Z_1-Z_5-Z_7)$, $\mathcal{O}(-Z_1-Z_5+Z_7)$,\linebreak $\mathcal{O}(-Z_5-Z_7)$,
$\mathcal{O}(-Z_5+Z_7)$,  $\mathcal{O}(-Z_4-Z_5)$,
$\mathcal{O}(-Z_4-Z_5-Z_7)$,\linebreak $\mathcal{O}(-Z_1-Z_4-Z_5)$, $\mathcal{O}(-Z_1-Z_4-Z_5-Z_7)$,
$\mathcal{O}(-Z_4-Z_5+Z_7)$,\linebreak
$\mathcal{O}(-Z_1-Z_4-Z_5+Z_7)$;
\item[c)]  $\mathcal{O}(Z_1-Z_4+Z_5)$, $\mathcal{O}(Z_1-Z_4+Z_5+Z_7)$, $\mathcal{O}(Z_1-Z_4+Z_5-Z_7)$;
\item[d)] $\mathcal{O}(-Z_7)$, $\mathcal{O}(-Z_1+Z_4-Z_7)$, $\mathcal{O}(Z_4-Z_7)$, $\mathcal{O}(-Z_1-Z_7)$,
$\mathcal{O}(-Z_1)$,\linebreak
$\mathcal{O}(Z_1-Z_7)$, $\mathcal{O}(-Z_1+Z_7)$, $\mathcal{O}(-Z_4)$, $\mathcal{O}(-Z_4-Z_7)$,
$\mathcal{O}(-Z_4+Z_7)$,\linebreak
$\mathcal{O}(Z_1-Z_4)$,
$\mathcal{O}(Z_1-Z_4-Z_7)$, $\mathcal{O}(Z_1-Z_4+Z_7)$.

\end{itemize}
As in the previous cases, our first step will be to write the generic divisor $D=\sum_{\rho =1}^7 a_\rho
Z_\rho$ in the basis  $\mathrm{Pic}(\equat)$
given by $Z_1,\;Z_4,\;Z_5$ and $Z_7$ using the following relations among the generators:
$$Z_2=-Z_1+Z_4,\quad Z_3=Z_1+Z_5-Z_7,\quad Z_6=Z_7.$$
We get
\begin{equation}\label{eq:basis
e4}D\simeq_{\mathrm{lin}}(a_1+a_3-a_2)Z_1+(a_3+a_5)Z_5+(a_2+a_4)Z_4+(a_6+a_7-a_3)Z_7.\end{equation}
As usual we denote with $(z_1,z_4,z_5,z_7)$ the coordinate of $D$ in the chosen basis.\\
A tedious computation shows that all the line bundle in a) are line bundles associated to a divisor whose
coefficents $a_i$ are either 0 or 1 (and hence acyclic thanks to
Remark \ref{mustata}).
For all the other divisors we need Borisov-Hua criterion.\\
As in the case of $\edue$ it can be observed that none of the line bundles above can be of any of the
forbidden forms relative to a forbidden set $I$ unless $I=
=\{1,3,4,6,7\}.$\\
Observe that 1 and 3 are among the elements of $I$ and $2\notin I$. This implies that $z_1=1$, hence
$a_1=a_3=0$ and $a_2=-1$. Since $5\notin I$
$-1\leq z_5=a_3+a_5\leq -1$. The divisors with $z_5=-1$ are the ones of group b) and none of these has a
positive $z_1$.\\
\indent Now, to prove the statement, we just need to show that the following divisors are not linearly
equivalent to a toric effective divisor.
\begin{itemize}
 \item[a)]$-Z_4$, $-Z_4-Z_7$, $-Z_4-Z_5$, $-Z_4-Z_5-Z_7,$ $-Z_1-Z_4-Z_5,$\linebreak $-Z_1-Z_4-Z_5-Z_7,$
$-Z_4+Z_7,$ $-Z_4-Z_5+Z_7,$
$-Z_1-Z_4-Z_5+Z_7,$\linebreak
$Z_1-Z_4+Z_5-Z_7$, $Z_1-Z_4$, $Z_1-Z_4-Z_7$, $-Z_1-Z_4$, $Z_1+Z_5-Z_4$,\linebreak $Z_1+Z_7-Z_4$, $-Z_4+Z_7$;
\item[b)] $-Z_1-Z_5$, $-Z_1-Z_5-Z_7,$ $-Z_1-Z_5+Z_7,$ $-Z_1+Z_4-Z_5,$ $-Z_5,$ $-Z_5-Z_7$, $-Z_1-Z_5-Z_7$,
$-Z_5+Z_7$;
\item[c)] $-Z_7,$ $-Z_1$, $-Z_1-Z_7$, $Z_7-Z_1$.
\end{itemize}
Looking at \eqref{eq:basis e2} it is obvious that the divisors in a) and b), which have or negative $z_4$ or a
negative  $z_5$ cannot be toric
effective. The divisor in the last group have both $z_4$ and $z_5$ equal to zero, while at least one among
$z_1$ and $z_7$ is equal to -1. Thus it
is straightforward to see that they cannot be linearly equivalent to a toric effective divisor.
\end{proof}
\section{Conclusions}
\indent Collecting all the results we obtained so far we are able to enunce:
\begin{thm}[Main Theorem]\label{mainthm}
All toric Fano 3-folds have a full strongly exceptional collection made up of line bundles.
\end{thm}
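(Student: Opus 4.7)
The plan is to assemble the Main Theorem as a direct consequence of Batyrev's classification combined with the partial results already established in the literature and the new case analyses carried out in Sections 3 and 4 of this paper. Concretely, I would proceed by case distinction along the five Types I--V of the classification table, observing that every smooth toric Fano 3-fold falls into exactly one of these classes, so a strongly exceptional full collection of line bundles on each representative suffices.

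First I would dispose of Types I, II, and III in a single line by invoking the theorem of Costa and Mir\'o-Roig (\cite[Theorem 4.21]{tilting}) cited at the end of Section 2, which already exhibits a full strongly exceptional sequence of line bundles for every variety in these three classes. Next, for Type V (the $S_3$-bundle over $\bbP^1$), I would invoke \cite[Proposition 2.5]{frobenius}, also stated in Section 2. This leaves Type IV, which is the content of the present paper and which splits into five subclasses: $\duno$, $\ddue$, $\euno$, $\edue$ and $\equat$.

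For each Type IV subclass, I would cite the corresponding theorem proved in Section 4: Theorem \ref{prop:duno} for $\duno$, Theorem \ref{prop:ddue} for $\ddue$, Theorem \ref{prop:euno} for $\euno$, Theorem \ref{prop:edue} for $\edue$, and Theorem \ref{prop:equat} for $\equat$. In each case the full collection was produced in Section 3 via Bondal's method and Thomsen's algorithm applied to the Frobenius pushforward $(\pi_p)_*(\fas{O})^{\dual}$, and the strong exceptionality was verified in Section 4 using Mustata's vanishing theorem together with the Borisov--Hua acyclicity criterion and the linear equivalence relations among the toric divisors dictated by the primitive relations.

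Since no substantive new argument remains, the hard part has already been carried out subdivision-by-subdivision; the only task at the level of the Main Theorem itself is to ensure that Batyrev's classification is exhaustive and that every variety in the table is addressed. I expect no obstacle here: the table in Section 2 is complete, and each row is handled by exactly one of the five results cited above. The proof therefore reduces to a bookkeeping statement listing, for each variety $V$ in the classification, the previous or new theorem that provides a full strongly exceptional collection of line bundles on $V$, concluding that the union of all these collections proves the assertion for every smooth toric Fano 3-fold.
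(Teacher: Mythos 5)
Your proposal is correct and follows essentially the same route as the paper: the Main Theorem is proved there exactly as a bookkeeping step, citing Costa--Mir\'o-Roig for Types I--III and Type V, and Theorems \ref{prop:duno}, \ref{prop:ddue}, \ref{prop:euno}, \ref{prop:edue} and \ref{prop:equat} for the five remaining Type IV varieties. No substantive difference from the paper's argument.
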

\begin{proof}
 As mentioned in the introduction to this paper, there were just five varieties left out: $\duno$, $\ddue$,
$\euno$, $\edue$ and $\equat$. In theorem
\ref{prop:duno} we proved that $\duno$ admits a full strongly exceptional sequence. In theorems
\ref{prop:ddue}, \ref{prop:euno},
\ref{prop:edue} and \ref{prop:equat} we showed that the same thing is true for the other four left.
\end{proof}

\section*{Acknowledgements}

 This work was mainly done when both authors were attending the edition of 2009 of P.R.A.G.MAT.I.C. summer
school in Catania. We would like to thank Laura Costa and Rosa-Maria Mir\'{o}-Roig for having suggested us the
problem we worked with and for the
time 
they dedicated to us giving useful advices and suggestions. Our thanks go, also, to the mathematic and
informatic department of Universit\`a  di
Catania
and the P.R.A.G.MAT.I.C organizers for the opportunity we recieved and the kind hospitality.\\
The second author also thanks Antonio Rapagnetta for very usefull conversation.
\bibliography{Fano_3-Folds.bib}

\end{document}